\newtheorem{theorem}{Theorem}[section]
\newtheorem{lemma}[theorem]{Lemma}
\newtheorem{definition}[theorem]{Definition}
\theoremstyle{remark} \newtheorem{remark}{Remark}
\title{Singular Perturbation in Multiscale Stochastic Control Problems with Domain Restriction in the Slow Variable}
  \author{
  	Anderson O. Calixto\thanks{The author would like to thank FAPERJ grant E-26/202.535/2022} \\
  	Departamento de Estatística \\
  	Universidade Federal Fluminense \\
  	\texttt{acalixto@id.uff.br}
   \And
  Bernardo Freitas Paulo da Costa \\
  School of Applied Mathematics  \\
  Getulio Vargas Foundation\\
  \texttt{bernardo.paulo@fgv.br} \\
  \And
  Glauco Valle \thanks{The author would like to thank FAPERJ grant E-26/202.636/2019 and CNPq grants 307938/2022-0 and 403423/20236}
   \\
  Instituto de Matemática \\
  Universidade Federal do Rio de Janeiro\\
  \texttt{glauco.valle@im.ufrj.br} \\
}
\begin{document}
\maketitle
\begin{abstract}
  We study a multiscale stochastic optimal control problem subject to state constraints on the slow variable. To address this class of problems, we develop a rigorous theoretical framework based on singular perturbation analysis, tailored to settings with constrained dynamics. Our approach relies on the theory of viscosity solutions for degenerate Hamilton–Jacobi–Bellman equations with Neumann-type boundary conditions. We also establish the convergence of the multiscale value functions in the infinite-horizon regime. Finally, we present two illustrative examples that highlight the applicability and effectiveness of the proposed framework.
\end{abstract}
\vspace{1em}
\noindent\textbf{Keywords:} Multiscale Stochastic Optimal Control, Multiscale Hamilton–Jacobi–Bellman equation, Degenerate Elliptic PDEs, Viscosity Solutions, State Constraints, Neumann Boundary Condition.
\section{Introduction}
\label{sec:introduction}
Multiscale Stochastic Optimal Control (MSOC) problems have been extensively studied, particularly in connection with singular perturbation methods and homogenization techniques. These approaches aim to characterize the effective behavior of systems governed by dynamics at distinct time scales, such as slow and fast variables. Foundational contributions in this direction include the works of Alvarez and Bardi \cite{AlvarezBardi01,AlvarezBardi02}, who employed viscosity solution techniques to analyze singular perturbations in deterministic and stochastic control. Their results established general convergence principles for the associated Hamilton--Jacobi--Bellman (HJB) equations under perturbation regimes.

In contrast, Kushner’s monograph \cite{Kushner} develops a complementary probabilistic approach based on weak convergence methods. This framework rigorously addresses singularly perturbed stochastic control and filtering problems by focusing on probabilistic limit theorems and convergence of stochastic processes. Thus, Kushner’s methods provide a different, yet foundational, perspective that contrasts with the analytical PDE-based techniques predominant in the viscosity solution literature.

In a related line of research, Alvarez, Bardi, and Marchi \cite{AlvarezBardiMarchi} investigated homogenization problems for second-order degenerate HJB equations in multiscale settings, while Bardi and Cesaroni \cite{BardiCesaroni} developed a multiscale framework where the fast dynamics are modeled by ergodic diffusion processes. These works provided rigorous foundations for deriving effective equations that govern the long-term behavior of multiscale control systems. The comprehensive book by Alvarez and Bardi \cite{AlvarezBardi} further elaborates on ergodicity, stabilization, and singular perturbations for Bellman–Isaacs equations, deepening the theoretical underpinnings of such problems.

The theory has also been applied to finance. For instance, Bardi, Cesaroni, and Manca \cite{BardiCesaroniManca} studied singular perturbation problems in stochastic volatility models, employing viscosity methods to analyze convergence in financial settings. More recently, Bardi and Kouhkouh \cite{BardiKouhkouh01,BardiKouhkouh02} addressed control problems inspired by deep learning and optimization, particularly focusing on degenerate stochastic systems with unbounded data and relaxation dynamics. 

While these contributions address multiscale behavior, degenerate dynamics, and complex control structures, the issue of state constraints --- particularly in combination with multiscale dynamics --- has remained largely unexplored. State constraints introduce significant mathematical challenges, especially when dealing with degenerate HJB equations and Neumann-type boundary conditions.

A crucial step toward addressing this gap was taken in the companion paper \cite{CalixtoCostaValle2025}, where a rigorous framework was developed for Stochastic Optimal Control (SOC) problems with state constraints in a non-multiscale setting. That work dealt with fully nonlinear degenerate elliptic HJB equations, where the diffusion is control-dependent and potentially degenerate, and the constraint is enforced through a nontrivial Neumann boundary condition. The authors established existence and uniqueness of viscosity solutions in compact domains and presented an illustrative application.

The present work builds directly upon that foundation, extending the theory to multiscale settings with singular perturbations and time-scale separation. In particular, we investigate convergence of value functions in the infinite-horizon regime for constrained systems where the restriction is imposed on the slow variable. Our contribution complements and generalizes existing literature by unifying multiscale stochastic control with boundary-constrained formulations in a viscosity solution framework.

This article is organized as follows:

In Section~\S\ref{sec:multiscale_stochastic_optimal_control}, we introduce the constrained MSOC problem, where the state constraint is imposed on the slow variable. We rigorously formulate the problem and establish its well-posedness—namely, the existence and uniqueness of the value function—by applying the foundational results developed in~\cite[Section~2]{CalixtoCostaValle2025}, which address the state-constrained SOC setting in a single-scale framework.

In Section~\S\ref{subsec:effective_stochastic_optimal_control}, we derive the effective (or homogenized) control problem that captures the asymptotic behavior of the original multiscale system as the time-scale separation parameter tends to zero. This effective problem retains the state constraint structure and is analyzed independently to ensure well-posedness under appropriate regularity and ergodicity conditions.

In Section~\S\ref{sec:convergence_multiscale_optimal_value_function_effective_optimal_value_function} contains the main result of the paper. We prove that the value function associated with the multiscale problem converges uniformly to the value function of the effective problem, thereby justifying the limiting procedure and validating the use of the effective model as a reliable approximation in the asymptotic regime.

In Section~\S\ref{sec:examples}, we present two examples that illustrate the applicability of the proposed theoretical framework. Subsection~\S\ref{subsec:control_problem_semilinear_hjb_equation} addresses an MSOC problem with constraints on the slow variable and no control in the diffusion matrix, leading to a semilinear, degenerate elliptic HJB equation with boundary conditions. Assuming the existence of classical solutions, we examine the structure of the corresponding Markovian control.

In Subsection~\S\ref{subsec:control_problem_fully_nonlinear_HJB_equation}, we analyze a more complex case where the control appears in the diffusion of the slow subsystem. This results in a fully nonlinear, degenerate HJB equation that encapsulates the full scope of the framework and, to the best of our knowledge, lies beyond the reach of existing methodologies.

Finally, in Appendix \ref{sec:appendix_A}, we study the differentiability of the Fokker–Planck equation with periodic boundary conditions with respect to the parameters of the slow scale.

\section{Multiscale Stochastic Optimal Control}
  \label{sec:multiscale_stochastic_optimal_control}
  We consider a stochastic basis \(\big(\Omega, \mathcal{O}, (\mathcal{F}_t)_{t \geq 0}, \mathbb{P}\big)\) satisfying the usual conditions; that is, the filtration \((\mathcal{F}_t)_{t \geq 0}\) is right-continuous and contains all \(\mathbb{P}\)-null sets. Furthermore, \((W(t))_{t \geq 0}\) denotes a standard \(d_W\)-dimensional Brownian motion defined on this basis. In the context of strong solutions to Stochastic Differential Equations (SDEs), the filtration \((\mathcal{F}_t)_{t \geq 0}\) is taken to be the one generated by the Brownian motion, augmented with the \(\mathbb{P}\)-null sets.
  
  We also fix a compact set \(\mathbb{X} \subset \mathbb{R}^d\), which plays a central role in the formulation of the state constraint. Specifically, we assume the existence of a function \(\phi \in \text{C}^2_{\text{b}}\left(\mathbb{R}^d\right)\) such that:
  \begin{itemize}
  	\item \(\mathbb{X} = \left\{x \in \mathbb{R}^d : \phi\big(x\big) \leqslant 0\right\}\);
  	\item \(\text{Int}\left(\mathbb{X}\right) = \left\{x \in \mathbb{R}^d : \phi\big(x\big) < 0\right\}\);
  	\item \(\partial \mathbb{X} = \left\{x \in \mathbb{R}^d : \phi\big(x\big) = 0\right\}\), with \(\|D_x\phi\big(x\big)\| = 1\) for all \(x \in \partial \mathbb{X}\).
  \end{itemize}
  
  We assume that the fast variable in the multiscale system is periodic. To formalize this assumption, we introduce the following definition:
  \begin{definition}[$\mathbb{Y}$-Periodic Function]
  	\label{def:function_Y_periodic}
  	Consider the set $\mathbb{Y}:=\big[0,1\big]^{d_Y}$ and $\big\{e_1,...,e_{d_Y}\big\}$ the vectors of the canonical basis of $\mathbb{R}^{d_Y}$. A function $g:\mathbb{R}^{d_Y}\to\mathbb{R}$ such that 
  	\begin{equation}
  		\label{eq:function_Y_periodic}
  		g\big(y+ke_i\big)=g\big(y\big)\hspace{0.15cm}\forall y\in\mathbb{R}^{d_Y},\hspace{0.10cm}\forall k\in\mathbb{Z}\hspace{0.15cm}\text{and}\hspace{0.15cm} \forall i\in \big\{1,...,\emph{d}_Y\big\}
  	\end{equation} 
  	is called $\mathbb{Y}$-\emph{Periodic}.
  \end{definition}
   We will also consider the following function spaces:
  \begin{itemize}
  	\item $\text{C}^{\text{k}}_{\text{per}}\big(\mathbb{Y}\big)$ defined as the space of functions $\text{C}^{\text{k}}\big(\mathbb{R}^{d_Y}\big)$ that satisfy the condition \eqref{eq:function_Y_periodic}. Similarly, we also define the space $\text{C}^{\infty}_{\text{per}}\big(\mathbb{Y}\big)$. By identifying with the torus $\mathbb{T}^{d_Y}$ we will simply write $\text{C}^{\text{k}}\big(\mathbb{T}^{d_Y}\big)$ and $\text{C}^{\infty}\big(\mathbb{T}^{d_Y}\big)$.
  	\item $\text{C}_{\text{per}}^{k_x,k_y}\big(\mathbb{X}\times\mathbb{Y}\big)$ defined as the space of functions $\text{C}^{k_x,k_y}\big(\mathbb{X}\times\mathbb{R}^{d_Y}\big)$ that satisfy the condition \eqref{eq:function_Y_periodic} in the second variable. By identifying with the torus $\mathbb{T}^{d_Y}$ we will simply write $\text{C}^{k_x,k_y}\big(\mathbb{X}\times\mathbb{T}^{d_Y}\big)$.
  \end{itemize}
  
   Given a scale factor of $0<\epsilon\ll 1$, we want to control the following stochastic dynamical system on two scales:
  \begin{subequations}
  	\label{subeq:multiscale_stochastic_dynamic_system_macroscale_variable_constraint}
  	\begin{align}
  		\label{eq:macroscale_stochastic_dynamical_system}
  		dX^{\epsilon}_x(t)=&\mu_{X}\big(X^{\epsilon}_x(t),Y^{\epsilon}_y(t),u(t)\big)\,dt+\sigma_{X}\big(X^{\epsilon}_x(t),Y^{\epsilon}_y(t),u(t)\big)\,dW(t)-D_x\phi\big(X^{\epsilon}_x(t)\big)\,dl^{\epsilon}_{x,y}(t),\\
  		\label{eq:microscale_stochastic_dynamical_system}
  		dY^{\epsilon}_y(t)=&\frac{1}{\epsilon}\mu_{Y}\big(X^{\epsilon}_x(t),Y^{\epsilon}_y(t)\big)dt+\frac{1}{\sqrt{\epsilon}}\sigma_{Y}\big(X^{\epsilon}_x(t),Y^{\epsilon}_y(t)\big)\,dW(t),
  	\end{align}
  \end{subequations}
  in which we have the elements:
  \begin{itemize}
  	\item The processes \(\big(X^{\epsilon}_x(t)\big)_{t \geq 0}\) and \(\big(Y^{\epsilon}_y(t)\big)_{t \geq 0}\) are continuous and adapted to the filtration \(\big(\mathcal{F}_t\big)_{t \geq 0}\). The former is referred to as the \textit{slow variable}, and the latter as the \textit{fast variable}.
  	
  	\item The process \(\big(l^{\epsilon}_{x,y}(t)\big)_{t \geq 0}\) is continuous, non-decreasing, and adapted to \(\big(\mathcal{F}_t\big)_{t \geq 0}\), with \(l^{\epsilon}_{x,y}(0) = 0\), and it satisfies the condition:
  	\[
  	l^{\epsilon}_{x,y}(t) = \int_0^t \mathds{1}_{\partial \mathbb{X}}\big(X^{\epsilon}_x(s)\big)\,dl^{\epsilon}_{x,y}(s) \quad \mathbb{P}\text{-a.s.}
  	\]
  	\item The process \(\big(u(t)\big)_{t \geq 0}\) is progressively measurable with respect to \(\big(\mathcal{F}_t\big)_{t \geq 0}\), taking values in a convex and compact set \(\mathbb{U} \subset \mathbb{R}^m\). The set of such controls is denoted by \(\mathcal{U}\), and its elements are referred to as admissible controls.
  \end{itemize}
  
  \begin{remark}
  	The intuitive interpretation of \(-D_x\phi\big(X^{\epsilon}_x(t)\big)\) is as follows: this gradient acts as a force field that reflects the process \(\big(X^{\epsilon}_x(t)\big)_{t \geq 0}\) back into the domain \(\mathbb{X}\). The reflection is instantaneously triggered by the process \(\big(l^{\epsilon}_{x,y}(t)\big)_{t \geq 0}\) whenever the state \(X^{\epsilon}_x(t)\) reaches the boundary \(\partial \mathbb{X}\).
  \end{remark}
  
  We now introduce the following hypothesis for the drift and dispersion fields:
  \begin{itemize}
  	\item[I] The drift $\mu_{X}$ and dispersion $\sigma_{X}$ of the slow variable are continuous in all variables and Lipschitz continuous in the spatial variables, uniformly with respect to the control variable. Additionally, these fields admit the following decomposition:
  	 \begin{equation*}
  	 	\mu_{X}\big(x,y,u\big)=\mu_{\text{SF}}\big(x,y\big)+\mu_{\text{SC}}\big(x,u\big)\quad\text{and}\quad\sigma_{X}\big(x,y,u\big)=\sigma_{\text{SF}}\big(x,y\big)+\sigma_{\text{SC}}\big(x,u\big)
  	 \end{equation*}
  	where SF denotes slow and fast variables, and SC refers to slow and control variables. Furthermore, we assume that these fields are periodic in the fast variable, as per condition \eqref{eq:function_Y_periodic}, uniformly with respect to both the slow and control variables. Finally, we assume that the following relationship holds:
  	  \begin{equation}
  	 	\label{eq:noncorrelation_between_parts_multiscale_dispersion}
  	 	\sigma_{\text{SC}}\big(x,u\big)\sigma^{\top}_{\text{SF}}\big(x,y\big)+\sigma_{\text{SF}}\big(x,y\big)\sigma^{\top}_{\text{SC}}\big(x,u\big)=0.
  	 \end{equation}

  	 \item[II] Initially, we assume that the drift  $\mu_{Y}$ and dispersion $\sigma_{Y}$ of the fast variable belong to the space $\text{C}^{0,2}\big(\mathbb{X}\times\mathbb{T}^{d_Y}\big)$. Additionally, they satisfy the following property: the diffusion matrix $\big[\sigma_{Y}\sigma^{\top}_{Y}\big]$ is positive definite uniformly over $\mathbb{X}\times\mathbb{T}^{d_Y}$. In other words, there exists a constant $c_0>0$ such that 
  	 	\begin{equation*}
  	 		\frac{1}{c_0}\|\xi\|^2\geqslant\bigg\langle \xi,\big[\sigma_{Y}\sigma^{\top}_{Y}\big]\big(x,y\big)\xi\bigg\rangle \geqslant c_0\|\xi\|^2\quad \forall \xi \in \mathbb{R}^{d_Y}\hspace{0.10cm}\text{and}\hspace{0.15cm}\forall (x,y)\in \mathbb{X}\times\mathbb{T}^{d_Y}.
  	 	\end{equation*}
  \end{itemize}
  In order to control the system \eqref{subeq:multiscale_stochastic_dynamic_system_macroscale_variable_constraint}, we must define the criteria for selecting the process $\big(u(t)\big)_{t\geqslant0}$ within the class of admissible processes $\mathcal{U}$.  Furthermore, we assume that:
  \begin{itemize}
  	\item[III] The operation cost $L$ is continuous in all variables and Lipschitz continuous in the spatial variables, uniformly with respect to the control variable. Additionally, the cost function admits the following decomposition:
  	\begin{equation*}
  		L\big(x,y,u\big)=L_{\text{SF}}\big(x,y\big)+L_{\text{SC}}\big(x,u\big)
  	\end{equation*}
  	and is periodic (condition \eqref{eq:function_Y_periodic}) in the fast variable $y$.
  	\item[IV] The preventive cost on the boundary $h\in \text{C}\big(\partial \mathbb{X}\big)$. 
  \end{itemize}
  
 The next hypothesis we introduce is the most delicate, and therefore requires a more detailed discussion. To establish that both the multiscale and effective value functions are viscosity solutions of their respective HJB equations, we rely on Theorem 2.11 from the companion paper~\cite{CalixtoCostaValle2025}. As is standard in the viscosity solution framework, we must verify separately that the candidate value function is a viscosity subsolution and a viscosity supersolution.
 
 In the proof of the subsolution property, an auxiliary control is employed. To guarantee strong solutions to the controlled stochastic differential equation, this control must be Lipschitz continuous. Such a control is classically synthesized as
 \begin{equation*}
 	u^{*}\big(x\big) \in \arg\min_{u \in \mathbb{U}} \bigg\{ \mathcal{L}^{u}_X \varphi\big(x\big) + L\big(x,u\big) \bigg\},
 \end{equation*}
 where $\varphi \in \text{C}^{\infty}\big(\mathbb{X}\big)$ is a smooth test function and $\mathcal{L}^{u}_X$ denotes the second-order differential operator associated with the controlled dynamics.
 
 To ensure the Lipschitz regularity of $u^*$, we invoke the following result:
 \begin{lemma}[\protect{\cite[Lemma 2.10]{CalixtoCostaValle2025}}]
 	\label{lm:Lipschitz_controls} 
 	Let $\mathbb{U} \subset \mathbb{R}^m$ and $\theta: \mathbb{X} \times \mathbb{U} \to \mathbb{R}$ be a function satisfying:
 	\begin{itemize}
 		\item $\mathbb{U}$ is a convex and compact set;
 		\item For every $x \in \mathbb{X}$, the mapping $u \mapsto \theta\big(x,u\big)$ belongs to $\emph{C}^2\big(\mathbb{U}\big)$;
 		\item The gradient $D_u \theta\big(\cdot, u\big)$ is Lipschitz continuous uniformly in $u \in \mathbb{U}$;
 		\item The Hessian $D^2_u \theta$ is positive definite uniformly on $(x,u) \in \mathbb{X} \times \mathbb{U}$.
 	\end{itemize}
 	Then, the mapping
 	\[
 	u^*\big(x\big) \in \arg\min_{u \in \mathbb{U}} \theta\big(x,u\big)
 	\]
 	is Lipschitz continuous.
 \end{lemma}
 
 In our setting, Lemma~\ref{lm:Lipschitz_controls} is applied to the function
 \begin{equation}
 	\label{eq:function_theta_varphi}
 	\theta_\varphi\big(x,u\big) := \mathcal{L}^{u}_X \varphi\big(x\big) + L\big(x,u\big).
 \end{equation}
 
 A concrete scenario in which conditions (1)–(4) of Lemma~\ref{lm:Lipschitz_controls} are satisfied for $\theta_\varphi$ arises under the following structural assumptions:
 \begin{itemize}
 	\item[5.1] For each $x \in \mathbb{X}$, the functions $\mu_X\big(x,\cdot\big)$, $\sigma_X\big(x,\cdot\big)$, and $L\big(x,\cdot\big)$ are of class $\text{C}^2\big(\mathbb{U}\big)$, and the mixed derivatives $D_{xu} \mu_X$ and $D_{xu} \sigma_X$ are continuous on $\mathbb{X} \times \mathbb{U}$;
 	\item[5.2] The following hold: $D^2_u \mu_X \equiv 0$, $D^2_u\big[\sigma_X \sigma_X^\top\big] \equiv 0$, and $D^2_u L \succ 0$.
 \end{itemize}
 
 It is worth noting that the examples provided in Section~\S\ref{sec:examples} satisfy both Assumptions 5.1 and 5.2. We emphasize, however, that these conditions are only sufficient to guarantee the applicability of Lemma~\ref{lm:Lipschitz_controls} to the function $\theta_\varphi$ defined in~\eqref{eq:function_theta_varphi}. 
 
  Finally, we remark that in the multiscale case, the function $\theta_\varphi$ depends on the parameter $\epsilon>0$. However, this does not pose any difficulty, as Theorem~2.11 is applied for each fixed $\epsilon > 0$, and uniformity in $\epsilon$ is not required.
 
 Accordingly, in~\cite[Theorem 2.11]{CalixtoCostaValle2025}, we adopt the following general hypothesis:
 
 \begin{itemize}
 	\item[V] Structural assumptions on the drift, diffusion, and running cost functions ensuring that Lemma~\ref{lm:Lipschitz_controls} holds for the function defined in~\eqref{eq:function_theta_varphi}.
 \end{itemize}
 
   Thus, given the initial conditions $(x,y)\in \mathbb{X}\times\mathbb{T}^{d_Y}$ and a discount rate $\beta>0$, we define the cost functional $J^{\beta,\epsilon}_{x,y}:\mathcal{U}\to\mathbb{R}$ by
  \begin{equation}
  	\label{eq:functional_cost_multiscale_operation}
  	J^{\beta,\epsilon}_{x,y}\big(u\big):=\mathbb{E}\Bigg[\int_{0}^{+\infty}e^{-\beta s}L\big(X^{\epsilon}_x(s),Y^{\epsilon}_y(s),u(s)\big)\,ds+\int_{0}^{+\infty}e^{-\beta s}h\big(X^{\epsilon}_x(s)\big)\,dl^{\epsilon}_{x,y}(s)\Bigg].
  \end{equation}
  We define the Multiscale Optimal Value Function:
  \begin{equation}
  	\label{eq:multiscale_optimal_value_function}
  	v^{\beta,\epsilon}\big(x,y\big):=\inf_{u\in\mathcal{U}}J^{\beta,\epsilon}_{x,y}\big(u\big).
  \end{equation}
  Just as we did in \cite[Subsection 2.2]{CalixtoCostaValle2025}, we associate \eqref{eq:multiscale_optimal_value_function} with the following identities from the Dynamic Programming Principle (DPP):
  \begin{subequations}
  	\label{subeq:multiscale_dynamic_programming_principle}
  	\begin{align}
  		\label{eq:multiscale_dynamic_programming_principle_inf_inf }
  		v^{\beta,\epsilon}\big(x,y\big)=\inf_{u\in \mathcal{U}}\inf_{\tau\in \mathcal{T}}\mathbb{E}\Bigg[&\int_{0}^{\tau}e^{-\beta s}L\big(X^{\epsilon}_x(s),Y^{\epsilon}_y(s),u(s)\big)\,ds+\int_{0}^{\tau}e^{-\beta s}h\big(X^{\epsilon}_x(s)\big)\,dl^{\epsilon}_{x,y}(s)+e^{-\beta\tau}v^{\beta,\epsilon}\big(X^{\epsilon}_x(\tau),Y^{\epsilon}_y(\tau)\big)\Bigg]\\
  		\label{eq:multiscale_dynamic_programming_principle_inf_sup }
  		=\inf_{u\in \mathcal{U}}\sup_{\tau\in \mathcal{T}}\mathbb{E}\Bigg[&\int_{0}^{\tau}e^{-\beta s}L\big(X^{\epsilon}_x(s),Y^{\epsilon}_y(s),u(s)\big)\,ds+\int_{0}^{\tau}e^{-\beta s}h\big(X^{\epsilon}_x(s)\big)\,dl^{\epsilon}_{x,y}(s)+e^{-\beta\tau}v^{\beta,\epsilon}\big(X^{\epsilon}_x(\tau),Y^{\epsilon}_y(\tau)\big)\Bigg]
  	\end{align}
  \end{subequations}
  where $\mathcal{T}$ is the set of stopping times with respect to $\big(\mathcal{F}_t\big)_{t\geq0}$. Similarly to what we did in \cite[Subsection 2.3]{CalixtoCostaValle2025}, we associate the control problem \eqref{subeq:multiscale_stochastic_dynamic_system_macroscale_variable_constraint}-\eqref{eq:multiscale_optimal_value_function} with the following multiscale HJB equation:
  \begin{subequations}
  	\label{subeq:multiscale_HJB_Von_Neumann_boundary}
  	\begin{align}
  		\label{eq:multiscale_HJB_equation}
  		\beta v^{\beta,\epsilon}-\mathcal{H}\Bigg(x,y,D_{x}v^{\beta,\epsilon},\frac{D_{y}v^{\beta,\epsilon}}{\epsilon},D^2_{x}v^{\beta,\epsilon},\frac{D^2_{y}v^{\beta,\epsilon}}{\epsilon},\frac{D^2_{x,y}v^{\beta,\epsilon}}{\sqrt{\epsilon}}\Bigg)&=0\hspace{0.20cm}\forall (x,y)\in \mathbb{X}\times \mathbb{T}^{d_Y}\\
  		\label{eq:Von_Neumann_boundary_condition}
  		\bigg\langle D_{x}v^{\beta,\epsilon},D_x\phi\bigg\rangle&=h\hspace{0.20cm}\forall (x,y)\in \partial \mathbb{X}\times\mathbb{T}^{d_Y}
  	\end{align}
  \end{subequations}
  where the function $\mathcal{H}: \mathbb{X}\times\mathbb{T}^{d_Y}\times\mathbb{R}^{d_X}\times\mathbb{R}^{d_Y}\times\mathbb{S}^{d_X}\big(\mathbb{R}\big)\times \mathbb{S}^{d_Y}\big(\mathbb{R}\big)\times \mathbb{M}^{d_X\times d_Y}\to\mathbb{R}$ is defined by
 \begin{equation}
 	\label{eq:multiscale_Hamiltonian}
 	\mathcal{H}\Big(x,y,g_x,g_y,H_x,H_y,H_{xy}\Big):=\min_{u\in \mathbb{U}}\Bigg\{\mathcal{G}_{X}^{u}\big(x,g_x,H_x\big|y\big)+\mathcal{G}_{Y}\big(y,g_y,H_y\big|x\big)+L\big(x,y,u\big)+\text{Tr}\Big(H_{xy}\big[\sigma_{X}\sigma^{\top}_{Y}\big]\big(x,y,u\big)\Big)\Bigg\}
 \end{equation}
 and is called the Hamiltonian of the system. The function $\mathcal{H}$ is composed of the elements:
 \begin{itemize}
 	\item For each $y\in\mathbb{T}^{d_Y}$ and $u\in \mathbb{U}$, the function $\mathcal{G}_{X}^{u}:\mathbb{X}\times \mathbb{R}^{d_X}\times \mathbb{S}^{d_X}(\mathbb{R})\to\mathbb{R}$ defined by
    \begin{equation*}
 		\mathcal{G}_{X}^{u}\big(x,g_x,H_x\big|y\big):=\big\langle \mu_{X}\big(x,y,u\big),g_x\big\rangle +\frac{1}{2}\text{Tr}\Big(H_x\big[\sigma_{X}\sigma_{X}^{\top}\big]\big(x,y,u\big)\Big)
 	\end{equation*}
 	is the infinitesimal generator of the SDE \eqref{eq:macroscale_stochastic_dynamical_system}.
 	\item For each $x\in \mathbb{X}$, the function $\mathcal{G}_{Y}:\mathbb{T}^{d_Y}\times \mathbb{R}^{d_Y}\times \mathbb{S}^{d_Y}(\mathbb{R})\to\mathbb{R}$ defined by
 	\begin{equation*}
 		\mathcal{G}_{Y}\big(y,g_y,H_y\big|x\big):=\big\langle \mu_{Y}\big(x,y\big),g_y\big\rangle +\frac{1}{2}\text{Tr}\Big(H_y\big[\sigma_{Y}\sigma_{Y}^{\top}\big]\big(x,y\big)\Big)
 	\end{equation*}
 	is the infinitesimal generator of the SDE \eqref{eq:microscale_stochastic_dynamical_system}
 \end{itemize}
  
Finally, the definition of viscosity solutions adopted in this paper is the same as that used in the companion paper~\cite{CalixtoCostaValle2025}, namely:
 
 \begin{definition}[Viscosity Solutions]
 	\label{def:viscosity_solutions}
 	\text{ }
 	\begin{itemize}
 		\item $w\in \emph{USC}\big(\mathbb{X}\big)$ is an \emph{viscosity subsolution}, if for every $\varphi\in \emph{C}^{\infty}\big(\mathbb{X}\big)$ such that, the application $x\mapsto\big(w-\varphi\big)\big(x\big)$ has a local maximum at the point $x\in \mathbb{X}$ with $w\big(x\big)=\varphi\big(x\big)$, the following inequalities hold:
 		\begin{subequations}
 			\label{subeq:viscosity_subsolution}
 			\begin{align}
 				\label{eq:inner_boundary_viscosity_subsolution}
 				F^{\beta}\Big(x,\varphi\big(x\big),D_{x}\varphi\big(x\big),D^2_{x}\varphi\big(x\big)\Big)&\leqslant0\quad x\in \mathbb{X},\\
 				\label{eq:boundary_viscosity_subsolution}
 				F^{\beta}\Big(x,\varphi\big(x\big),D_x\varphi\big(x\big), D^2_x\varphi\big(x\big)\Big)\wedge\Gamma\Big(x,D_x\varphi\big(x\big)\Big)&\leqslant0\quad x\in\partial \mathbb{X}.
 			\end{align}
 		\end{subequations}	
 		\item $w\in \emph{LSC}\big(\mathbb{X}\big)$ is an \emph{viscosity supersolution}, if for every $\varphi\in \emph{C}^{\infty}\big(\mathbb{X}\big)$ such that, the application $x\mapsto\big(w-\varphi\big)\big(x\big)$ has a local minimum at the point $x\in \mathbb{X}$ with $w\big(x\big)=\varphi\big(x\big)$, the following inequalities hold:
 		\begin{subequations}
 			\label{subeq:viscosity_supersolution}
 			\begin{align}
 				\label{eq:inner_boundary_viscosity_supersolution}
 				F^{\beta}\Big(x,\varphi\big(x\big),D_x\varphi\big(x\big), D^2_x\varphi\big(x\big)\Big)&\geqslant0\quad x\in \mathbb{X},\\
 				\label{eq:boundary_viscosity_supersolution}
 				F^{\beta}\Big(x,\varphi\big(x\big),D_x\varphi\big(x\big),
 				D^2_x\varphi\big(x\big)\Big)\lor\Gamma\Big(x,D_x\varphi\big(x\big)\Big)&\geqslant0\quad x\in\partial \mathbb{X}.
 			\end{align}
 		\end{subequations}
 		\item $w\in \emph{C}\big(\mathbb{X}\big)$ is an \emph{viscosity solution} if it is at the same time a subsolution and a supersolution. 
 	\end{itemize}
 \end{definition}
 \subsection{Multiscale Optimal Value Function as a Viscosity Solution}
 \label{subsec:multiscale_optimal_value_function_viscosity_solution}
 Consider the following SDE with control and reflection on the boundary of the compact set $\widetilde{\mathbb{X}}:=\mathbb{X}\times\mathbb{T}^{d_Y}$:
 \begin{equation}
 	\label{eq:auxiliary_multiscale_SDE_fast_slow_system}
 	dZ^{\epsilon}_z(t)=\widetilde{\mu}^{\epsilon}_{Z}\big(Z^{\epsilon}_z(t),u(t)\big)\,dt+\widetilde{\sigma}^{\epsilon}_{Z}\big(Z^{\epsilon}_z(t),u(t)\big)\,dW(t)-D_z\widetilde{\phi}\big(Z^{\epsilon}_z(t)\big)\,dl^{\epsilon}_z(t),
 \end{equation}
 with $Z^{\epsilon}_z(t):=\big(X^{\epsilon}_x(t),Y^{\epsilon}_y(t)\big)$. In addition, we also have the following elements:
 \begin{itemize}
 	\item $\widetilde{\phi}:\mathbb{R}^{d_X}\times\mathbb{T}^{d_Y}\to\mathbb{R}$ is a function defined by:
    \begin{equation*}
 	  \widetilde{\phi}\big(x,y\big): =\phi\big(x\big)
    \end{equation*}
    \item The drift $\widetilde{\mu}^{\epsilon}_{Z}$ and dispersion $\widetilde{\sigma}^{\epsilon}_{Z}$ fields are defined, respectively, by 
    \begin{equation*}
 	\widetilde{\mu}^{\epsilon}_{Z}\big(z,u\big):=\begin{bmatrix}
 		\mu_{X}\big(z,u\big)\\
 		\frac{1}{\epsilon}\mu_{Y}\big(z\big)
 	\end{bmatrix}\quad\text{and}\quad\widetilde{\sigma}^{\epsilon}_{Z}\big(z,u\big):=\begin{bmatrix}
 		\sigma_{X}\big(z,u\big)\\
 		\frac{1}{\sqrt{\epsilon}}\sigma_{Y}\big(z\big)
 	   \end{bmatrix}.
    \end{equation*}
 \end{itemize}
 From items I and II, together with the definition of the fields $\widetilde{\mu}^{\epsilon}_{Z}$ and $\widetilde{\sigma}^{\epsilon}_{Z}$, we get from \cite{LionsSznitman}, \cite{PardouxRascanu}, and \cite{Pilipenko} that there exists a unique pair of processes $\big(Z^{\epsilon}_z(t),l^{\epsilon}_z(t)\big)_{t\geq0}$ which is a strong solution of the SDE \eqref{eq:auxiliary_multiscale_SDE_fast_slow_system}. Thus, there exists a unique triple of processes $\big(X^{\epsilon}_x(t),Y^{\epsilon}_y(t),l^{\epsilon}_{x,y}(t)\big)_{t\geq0}$ which is a strong solution of the SDE \eqref{subeq:multiscale_stochastic_dynamic_system_macroscale_variable_constraint}. We now define the cost functional for the dynamical system \eqref{eq:auxiliary_multiscale_SDE_fast_slow_system}:
 \begin{equation}
 	\label{Auxiliary_operation_cost_function}
 	J^{\beta,\epsilon}_z\big(u\big):=\mathbb{E}\Bigg[\int_{0}^{+\infty}e^{-\beta s}\widetilde{L}\big(Z^{\epsilon}_z(s),u(s)\big)\,ds+\int_{0}^{+\infty}e^{-\beta s}\widetilde{h}\big(Z^{\epsilon}_z(s)\big)\,dl^{\epsilon}_z(s)\Bigg]
 \end{equation}
 where $\widetilde{L}$ is the same operation cost we used in the multiscale problem and $\widetilde{h}$ is defined by $\widetilde{h}\big(x,y\big):=h\big(x\big)$ . From items III and IV, it follows from \cite[Lemma 2.2]{CalixtoCostaValle2025} that the functional \eqref{Auxiliary_operation_cost_function} is well defined and consequently the functional \eqref{eq:functional_cost_multiscale_operation}
 is also well defined.  In addition, items I, II, III and IV guarantee, by \cite[Lemma 2.6]{CalixtoCostaValle2025}, that the family $\big(z\mapsto J^{\beta,\epsilon}_z\big(u\big)\big)_{u\in\mathcal{U}}$ is equicontinuous and thus the application $\big((x,y)\mapsto J^{\beta,\epsilon}_{x,y}\big(u\big)\big)_{u\in\mathcal{U}}$ is also equicontinuous. Finally, we define the optimal value function  
 \begin{equation}
 	\label{eq:auxiliary_optimal_value_function}
 	v^{\beta,\epsilon}\big(z\big):=\inf_{u\in\mathcal{U}}J^{\beta,\epsilon}_{z}\big(u\big)
 \end{equation}
 and, by \cite[Lemma 2.7]{CalixtoCostaValle2025}, this function is continuous, which implies that the optimal value function \eqref{eq:multiscale_optimal_value_function} is also continuous.
 
 From items I, II, III and IV, we see that the DPP \cite[Theorem 2.8]{CalixtoCostaValle2025} holds for the optimal value function \eqref{eq:auxiliary_optimal_value_function}. This implies that this principle (rewritten as in \eqref{subeq:multiscale_dynamic_programming_principle}) also holds for the optimal value function \eqref{eq:multiscale_optimal_value_function}. Thus, together with the Itô formula, we heuristically obtain the following HJB equation
 \begin{subequations}
 	\label{subeq:auxiliary_HJB_Von_Neumann_boundary}
 	\begin{align}
 		\label{eq:auxiliary_HJB_equation}
 		\beta v^{\beta,\epsilon}\big(z\big)-\mathcal{\widetilde{H}}^{\epsilon}\Big(z,D_zv^{\beta,\epsilon}\big(z\big), D^2_zv^{\beta,\epsilon}\big(z\big)\Big)&=0\qquad\hspace{0.13cm} \forall z\in \widetilde{\mathbb{X}},\\
 		\label{eq:auxiliary_von_Neumann_boundary_condition}
 		\big\langle D_{z}v^{\beta,\epsilon}\big(z\big),D_z\widetilde{\phi}\big(z\big)\big\rangle&=\widetilde{h}\big(z\big)\quad \forall z\in \partial\widetilde{\mathbb{X}}
 	\end{align}
 \end{subequations}
 where the Hamiltonian $\mathcal{\widetilde{H}}^{\epsilon}$ is given by
 \begin{equation}
 	\label{eq:auxiliary_Hamiltonian}
 	\mathcal{\widetilde{H}}^{\epsilon}\Big(z,g_z,H_z\Big):=\min_{u\in \mathbb{U}}\Bigg\{\mathcal{G}_{Z}^{\epsilon,u}\big(z,g_z,H_z\big)+L\big(z,u\big)\Bigg\}.
 \end{equation}
 To finish rewriting the problem \eqref{subeq:multiscale_stochastic_dynamic_system_macroscale_variable_constraint}-\eqref{eq:multiscale_optimal_value_function} in the form \eqref{eq:auxiliary_multiscale_SDE_fast_slow_system}-\eqref{eq:auxiliary_optimal_value_function}, we need to show that the domain $\widetilde{\mathbb{X}}$ satisfies the same conditions as the domain $\mathbb{X}$. To see this, consider the following set
   \begin{equation*}
	\mathfrak{X}:=\Bigg\{(x,y)\in \mathbb{R}^{d_X}\times\mathbb{R}^{d_Y}:\widetilde{\phi}\big(x,y\big)\leqslant 0\Bigg\}.
\end{equation*}
  On the one hand, given $(x,y)\in\widetilde{\mathbb{X}}$, it follows that $0\geqslant\phi\big(x\big)=\widetilde{\phi}\big(x,y\big)$, which implies $(x,y)\in \mathfrak{X}$. On the other hand, if $(x,y)\in \mathfrak{X}$, then $y\in \mathbb{T}^{d_Y}$ and $\phi\big(x\big)\leqslant 0$, which implies $x\in\mathbb{X}$ and therefore $(x,y)\in\widetilde{\mathbb{X}}$.  Furthermore, by the definition of $\widetilde{\phi}$, this function is of class $\text{C}^2_{\text{b}}\big(\mathbb{R}^{d_X}\times\mathbb{T}^{d_Y}\big)$. Now, observe that
 \begin{equation*}
 	\partial\widetilde{\mathbb{X}}=\partial\big(\mathbb{X}\times\mathbb{T}^{d_Y}\big)=\big(\partial \mathbb{X}\times\mathbb{T}^{d_Y}\big)\cup \big(\mathbb{X}\times\partial \mathbb{T}^{d_Y}\big)=\partial \mathbb{X}\times\mathbb{T}^{d_Y}
 \end{equation*}
 because $\partial \mathbb{T}^{d_Y}=\emptyset$. Since, for all $x\in\partial \mathbb{X}$, we have $\phi\big(x\big)=0$, it follows that
 \begin{equation*}
 	\partial\widetilde{\mathbb{X}}=\Bigg\{(x,y)\in \mathbb{R}^{d_X}\times\mathbb{R}^{d_Y}:\widetilde{\phi}\big(x,y\big)=0\Bigg\}.
 \end{equation*}
 On the other hand,
 \begin{equation*}
 	\text{Int}\big(\widetilde{\mathbb{X}}\big)=\widetilde{\mathbb{X}}-\partial \widetilde{\mathbb{X}}=\mathbb{X}\times\mathbb{T}^{d_Y}-\partial \mathbb{X}\times \mathbb{T}^{d_Y}=\big(\mathbb{X}-\partial \mathbb{X}\big)\times\mathbb{T}^{d_Y}=\text{Int}\big(\mathbb{X}\big)\times\mathbb{T}^{d_Y}.
 \end{equation*}
 Since, for all $x\in\text{Int}\big(\mathbb{X}\big)$, we have $\phi\big(x\big)<0$, it follows that
 \begin{equation*}
 	\text{Int}\big(\widetilde{\mathbb{X}}\big)=\Bigg\{(x,y)\in \mathbb{R}^{d_X}\times\mathbb{R}^{d_Y}:\widetilde{\phi}\big(x,y\big)<0\Bigg\}.
 \end{equation*}
 
Finally, for all $z=(x,y)\in \partial\widetilde{\mathbb{X}}$ we have $\|D_z\widetilde{\phi}\big(z\big)\|=\|D_x\phi\big(x\big)\|=1$. Therefore, the set $\widetilde{\mathbb{X}}$ satisfies the same properties as the domain $\mathbb{X}$. Applying \cite[Theorems 2.11 and 2.12]{CalixtoCostaValle2025} to the formulation \eqref{subeq:auxiliary_HJB_Von_Neumann_boundary}, we obtain that the optimal value function \eqref{eq:auxiliary_optimal_value_function} is the unique viscosity solution of problem \eqref{subeq:auxiliary_HJB_Von_Neumann_boundary} and consequently the optimal value function \eqref{eq:multiscale_optimal_value_function} is the unique viscosity solution of problem \eqref{subeq:multiscale_HJB_Von_Neumann_boundary}. 
\section{Effective Stochastic Optimal Control}
  \label{subsec:effective_stochastic_optimal_control}
 In simple terms, the goal is to derive a new Hamiltonian for the multiscale phenomenon we are modeling, with the following characteristic: in the new Hamiltonian, the dynamics of the slow variable continue to be explicitly represented, while the dynamics of the fast variable are implicitly embedded. Therefore, the objective is to define a new problem, called the effective problem, given by:
  \begin{subequations}
  	\label{subeq:effective_HJB_Von_Neumann_boundary}
  	\begin{align}
  		\label{eq:effective_HJB_equation}
  		\beta\overline{v}^{\beta}\big(x\big)-\mathcal{\overline{H}}\Big(x,D_{x}\overline{v}^{\beta}\big(x\big),D^2_{x}\overline{v}^{\beta}\big(x\big)\Big)&=0\qquad\hspace{0.15cm}\forall x\in \mathbb{X},\\
  		\label{eq:effective_von_Neumann_boundary_condition}
  		\bigg\langle D_{x}\overline{v}^{\beta}\big(x\big),D_x\phi\big(x\big)\bigg\rangle&=h\big(x\big)\quad \forall x\in \partial \mathbb{X}.
  	\end{align}
  \end{subequations}
  The problems \eqref{subeq:multiscale_HJB_Von_Neumann_boundary} and \eqref{subeq:effective_HJB_Von_Neumann_boundary} are related in two key steps. The first step is to derive a formula that describes the effective Hamiltonian as a function of the multiscale Hamiltonian. The second step is to demonstrate that the family  $\big(v^{\beta,\epsilon}\big)_{\epsilon>0}$ converges uniformly in $\mathbb{X}\times\mathbb{T}^{d_Y}$ to the effective solution $\overline{v}^{\beta}$. 
  
  Our phenomenon of interest is modeled as a SOC problem rather than a second-order nonlinear PDE. In control theory, the HJB equation is primarily a tool for deriving controls within a specific class of Markov controls. Hence, the effective problem \eqref{subeq:effective_HJB_Von_Neumann_boundary} only becomes relevant if it can be ``disassembled'' back into an SOC problem. To achieve this, the effective Hamiltonian $\mathcal{\overline{H}}$ must possess the following form:
  \begin{equation}
  	\label{eq:explicit_effective_Hamiltonian}
  	\mathcal{\overline{H}}\bigg(x,g_x,H_x\bigg)=\min_{u\in \mathbb{U}}\Bigg\{\bigg\langle \overline{\mu}_{\overline{X}}\big(x\big)+\mu_{\text{SC}}\big(x,u\big),g_x\bigg\rangle+\frac{1}{2}\text{Tr}\Big(H_x\big[\big(\overline{\sigma}_{\overline{X}}+\sigma_{\text{SC}}\big)\big(\overline{\sigma}_{\overline{X}}+\sigma_{\text{SC}}\big)^{\top}\big]\big(x,u\big)\Big)+\overline{L}\big(x\big)+L_{\text{SC}}\big(x,u\big)\Bigg\}.
  \end{equation}
  When this representation is possible, we get an effective SOC problem that takes the following form:
  \begin{subequations}
  	\label{subeq:effective_stochastic_optimal_control_problem}
  	\begin{align}
  		\label{eq:effective_optimum_value_function}
  		\overline{v}^{\beta}\big(x\big):=&\inf_{u\in\mathcal{U}}\mathbb{E}\Bigg[\int_{0}^{+\infty}e^{-\beta s}\big[\overline{L}\big(\overline{X}_x(s)\big)+L_{\text{SC}}\big(\overline{X}_x(s),u(s)\big)\big]\,ds+\int_{0}^{+\infty}e^{-\beta s}h\big(\overline{X}_x(s)\big)\,d\bar{l}_x(s)\Bigg],\\
  		\label{eq:effective_stochastic_dynamic_system}
  		d\overline{X}_x(t)=&\bigg[\overline{\mu}_{\overline{X}}\big(\overline{X}_x(t)\big)+\mu_{\text{SC}}\big(\overline{X}_x(t),u(t)\big)\bigg]\,dt+\bigg[\overline{\sigma}_{\overline{X}}\big(\overline{X}_x(t)\big)+\sigma_{\text{SC}}\big(\overline{X}_x(t),u(t)\big)\bigg]\,dW(t)-D_x\phi\big(\overline{X}_x(t)\big)\,d\overline{l}_x(t).
  	\end{align}
  \end{subequations}
  
  Since the control problem \eqref{subeq:effective_stochastic_optimal_control_problem} operates on a single scale, it is not necessary to consider any form of decomposition of the effective drift and diffusion fields, nor of the effective running cost. Below, we present a list of assumptions, as introduced in \cite[Section 1 and Subsection 2.3]{CalixtoCostaValle2025}, under which a SOC problem with state constraints, control-dependent diffusion matrix, and a positive semidefinite diffusion matrix can be addressed.

  \begin{itemize}
  	\item[\(\textbf{H}_1\)] The drift \(\mu_{X}\) and the dispersion \(\sigma_{X}\) are continuous, bounded, and Lipschitz continuous with respect to the spatial variable, uniformly with respect to the control variable. Specifically, there exists a constant \(C > 0\) such that for every \(x, y \in \mathbb{R}^d\) and every \(u \in \mathbb{U}\), the following inequalities hold:
  	\[
  	\|\mu_{X}\big(x,u\big)-\mu_{X}\big(y,u\big)\|+\|\sigma_{X}\big(x,u\big)-\sigma_{X}\big(y,u\big)\| \leqslant C\|x-y\| \quad \text{and} \quad \|\mu_{X}\big(x,u\big)\| + \|\sigma_{X}\big(x,u\big)\| \leqslant C.
  	\]
  	\item[\(\textbf{H}_2\)] The function \(L: \mathbb{R}^d \times \mathbb{U} \to \mathbb{R}\) is continuous and bounded in \((x, u)\), and Lipschitz continuous in the spatial variable, uniformly in the control. That is, there exists a constant \(C > 0\) such that for every \(x, y \in \mathbb{R}^d\) and every \(u \in \mathbb{U}\),
  	\[
  	\big|L\big(x,u\big)-L\big(y,u\big)\big| \leqslant C\|x - y\| \quad \text{and} \quad \big|L\big(x, u\big)\big| \leqslant C.
  	\]
  	\item[\(\textbf{H}_3\)] The function \(h: \partial \mathbb{X} \to \mathbb{R}\) is continuous. 
  	\item[$\textbf{H}_4$] Structural assumptions on the drift, dispersion, and running cost functions that guarantee the validity of Lemma \ref{lm:Lipschitz_controls} for the function defined in \eqref{eq:function_theta_varphi}.
  \end{itemize}
  \subsection{Introduction to Cell-$t$-Problem}
  \label{subsec:cell_t_problem}
  In this subsection, we present an auxiliary PDE that allows us to obtain the effective Hamiltonian $\mathcal{\overline{H}}$. To do this, we adapt the theory presented in \cite[Chapters 2 and 3]{AlvarezBardi} and in \cite{BardiCesaroni,BardiCesaroniManca} to our case. Fixing $\bar{x}\in \mathbb{X}$ and taking $\epsilon=1$ in the SDE \eqref{eq:microscale_stochastic_dynamical_system}, we obtain the following stochastic dynamical system
  \begin{equation}
  	\label{eq:microscale_stochastic_dynamical_system_frozen_macro_variable}
  	dY_y(t)=\mu_{Y}\big(\bar{x},Y_y(t)\big)\,dt+\sigma_{Y}\big(\bar{x},Y_y(t)\big)\,dW(t).
  \end{equation} 
  Fixed $p:=\big(\bar{x},g_{\bar{x}},H_{\bar{x}}\big)\in \mathbb{X}\times\mathbb{R}^{d_X}\times\mathbb{S}^{d_X}\big(\mathbb{R}\big)$ (the parameters of the slow scale), consider the function $ \mathfrak{H}_p: \mathbb{T}^{d_Y}\to\mathbb{R}$ defined by 
  \begin{equation}
  	\label{eq:microscale_operation_cost}
  	\mathfrak{H}_p\big(y\big):=\min_{u\in \mathbb{U}}\Bigg\{\mathcal{G}_{X}^{u}\big(\bar{x},g_{\bar{x}},H_{\bar{x}}\big|y\big)+L\big(\bar{x},y,u\big)\Bigg\}.
  \end{equation}
  We now define the Hamiltonian $\mathcal{H}_p:\mathbb{T}^{d_Y}\times\mathbb{R}^{d_Y}\times\mathbb{S}^{d_Y}\big(\mathbb{R}\big)\to\mathbb{R}$ by
  \begin{equation}
  	\label{eq:hamiltonian_cell_t_problem}
  	\mathcal{H}_p\Big(y,g_y,H_y\Big):=\bigg\langle\mu_{Y}\big(\bar{x},y\big),g_y\bigg\rangle +\frac{1}{2} \text{Tr}\Big(H_y\big[\sigma_{Y}\sigma_{Y}^{\top}\big]\big(\bar{x},y\big)\Big)+\mathfrak{H}_p\big(y\big).
  \end{equation}
  Observe that the Hamiltonian \eqref{eq:hamiltonian_cell_t_problem} relates to the previous multiscale Hamiltonian $\mathcal{H}$ as follows
  \begin{equation*}
  	\mathcal{H}_p\bigg(y,g_y,H_y\bigg)=\mathcal{H}\bigg(\bar{x},y,g_{\bar{x}},g_y,H_{\bar{x}},H_y,0\bigg).
  \end{equation*}
  The intuition behind the above equality is that, with respect to the fast variable $\big(Y(t)\big)_{t\geq0}$, the slow variable $\big(X(t)\big)_{t\geq0}$ is ``frozen'', and so we can treat it as constant when looking only at the dynamics of the system on the fast scale. We call the following second-order parabolic linear PDE the \emph{Cell-t-Problem}:
  \begin{subequations}
  	\label{subeq:PDE_cell_t_problem}
  	\begin{align}
  		\label{eq:PDE_cell_t_problem}
  		-\partial_{t}w_p\big(t,y\big)+\mathcal{H}_p\Big(y,D_yw_p\big(t,y\big),D^2_yw_p\big(t,y\big)\Big)&=0\quad \forall (t,y)\in\mathbb{R}_{+}\times\mathbb{T}^{d_Y},\\
  		\label{eq:initial_condition_cell_t_problem}
  		w_p\big(0,y\big)&=0\quad \forall y\in \mathbb{T}^{d_Y}.
  	\end{align}
  \end{subequations} 
  From hypotheses I, II, and III it follows from \cite[Theorem 2]{IlyinKalashnikovOleynik} that the PDE \eqref{subeq:PDE_cell_t_problem} admits a unique classical solution given by 
  	\begin{equation}
  	\label{eq:cell_t_problem_solution}
  	w_p\big(t,y\big):=\int_{0}^{t}\mathbb{E}\big[\mathfrak{H}_p\big(Y_y(s)\big)\big]\,ds.
  \end{equation}
  
  A fundamental part of the theory presented in \cite{AlvarezBardi} is that all effective functions must be obtained as functions of the properties of the dynamical system \eqref{eq:microscale_stochastic_dynamical_system_frozen_macro_variable}. The next definition is an adaptation of the concept of \emph{Ergodic Uniqueness} presented in \cite[Chapter 2]{AlvarezBardi} to our case.
  \begin{definition}[Ergodic Uniqueness]
  	\label{df:ergodic_uniqueness_cell_t_problem}
  	The stochastic dynamical system \eqref{eq:microscale_stochastic_dynamical_system_frozen_macro_variable} is \emph{Uniquely Ergodic} if, for every $\varphi\in \emph{C}\big(\mathbb{T}^{d_Y}\big)$, there exists a constant $c_{\varphi}\in\mathbb{R}$ such that
  	\begin{equation*}
  		\lim_{t\to +\infty}\frac{1}{t}\int_{0}^{t}\mathbb{E}\big[\varphi\big(Y_y(s)\big)\big]\,ds=c_{\varphi}\quad \forall y\in \mathbb{T}^{d_Y}.
  	\end{equation*}
  \end{definition}
  \begin{definition}[Effective Hamiltonian]
  	\label{df:effective_hamiltonian_cell_t_problem}
  		Assume that the stochastic dynamical system \eqref{eq:microscale_stochastic_dynamical_system_frozen_macro_variable} is uniquely ergodic. Given $p:=\big(\bar{x},g_{\bar{x}},H_{\bar{x}}\big)\in \mathbb{X}\times\mathbb{R}^{d_X}\times\mathbb{S}^{d_X}\big(\mathbb{R}\big)$, consider $\mathfrak{H}_p: \mathbb{T}^{d_Y}\to\mathbb{R}$ the function defined in \eqref{eq:microscale_operation_cost}. We call $\mathcal{\overline{H}}:\mathbb{X}\times\mathbb{R}^{d_X}\times\mathbb{S}^{d_X}\big(\mathbb{R}\big)\to\mathbb{R}$ defined by 
  		\begin{equation}
  			\label{eq:uniquely_ergodic_effective_Hamiltonian}
  			\mathcal{\overline{H}}\Big(\bar{x},g_{\bar{x}},H_{\bar{x}}\Big):=	\lim_{t\to +\infty}\frac{1}{t}\int_{0}^{t}\mathbb{E}\big[\mathfrak{H}_p\big(Y_y(s)\big)\big]\,ds\quad \forall y\in\mathbb{T}^{d_Y}
  		\end{equation}
  		of \emph{Effective Hamiltonian}.
  \end{definition}
   In this case, the question is: how can we prove that a given system is uniquely ergodic according to Definition \ref{df:ergodic_uniqueness_cell_t_problem}? This answer will be given in the next subsection, where we discuss ergodic theory for Markov processes.
  \subsection{Ergodic theory of Markov Processes}
  \label{subsubsec:ergodic_theory_Markov_processes}
  The case we are interested in is when the dynamics of the fast scale does not depend on the control variable, i.e. when we have \eqref{eq:microscale_stochastic_dynamical_system_frozen_macro_variable}.    
  The advantage of not controlling the dynamics of the fast variable directly is that we can use a series of analytical tools that make it easier to study the ergodic behavior of this system. To see this, fixed $\bar{x}\in \mathbb{X}$, consider the dynamical system \eqref{eq:microscale_stochastic_dynamical_system_frozen_macro_variable}.
  As the drift $\mu_{Y}$ and dispersion $\sigma_{Y}$ are deterministic, we have from  \cite[Theorem 7.2.4]{Oksendal} that the SDE \eqref{eq:microscale_stochastic_dynamical_system_frozen_macro_variable} defines a Markov process. On the other hand, the infinitesimal generator of this SDE is given by 
  \begin{equation*}
  	\mathcal{L}^{\bar{x}}f\big(y\big):=\bigg\langle\mu_{Y}\big(\bar{x},y\big),D_yf\big(y\big)\bigg\rangle+\frac{1}{2}\text{Tr}\Big(\big[\sigma_{Y}\sigma^{\top}_{Y}\big]\big(\bar{x},y\big)D^2_yf\big(y\big)\Big)\quad \forall y\in\mathbb{T}^{d_Y}
  \end{equation*}
  where $f\in \text{C}^2\big(\mathbb{T}^{d_Y}\big)$. We can formally define the adjoint operator of $\mathcal{L}^{\bar{x}}$ (in the sense of $\text{L}^2$) as 
  \begin{equation*}
  	\mathcal{L}^{\bar{x},*}\rho\big(y\big):=-\sum_{i=1}^{d_Y}\partial_{y_i}\big(\mu_{Y,i}\big(\bar{x},y\big)\rho\big(y\big)\big)+\frac{1}{2}\sum_{i,j}^{d_Y}\partial^2_{y_i,y_j}\bigg(\big[\sigma_{Y}\sigma^{\top}_{Y}\big]_{ij}\big(\bar{x},y\big)\rho\big(y\big)\bigg)\quad \forall y\in\mathbb{T}^{d_Y}
  \end{equation*}
  where $\rho$ satisfies:
  \begin{equation*}
  	\inf_{y\in \mathbb{T}^{d_Y}}\rho\big(y\big)>0\quad\text{and}\quad \int_{\mathbb{T}^{d_Y}}\rho\big(y\big)dy=1.
  \end{equation*}
  With these objects, we define the so-called stationary \emph{Fokker-Planck} PDE
  \begin{equation}
  	\label{eq:fokker_planck_PDE}
  	\mathcal{L}^{\bar{x},*}\rho\big(y\big)=0\quad \forall y\in \mathbb{T}^{d_Y}.\\
  \end{equation}
  \begin{definition}[Ergodic Markov Processes]
  	\label{df:ergodic_markov_processes}
  	The Markov process $\big(Y(t)\big)_{t\geq0}$ generated by the SDE \eqref{eq:microscale_stochastic_dynamical_system_frozen_macro_variable} is \emph{Ergodic} if the PDE \eqref{eq:fokker_planck_PDE} admits a unique classical solution and for every $\varphi \in \emph{C}\big(\mathbb{T}^{d_Y}\big)$ the following limit holds
  	\begin{equation}
  		\label{eq:microscale_ergodic_limit}
  		\lim_{t\to+\infty}\frac{1}{t}\int_{0}^{t}\mathbb{E}\big[\varphi\big(Y_y(s)\big)\big]\,ds=\int_{\mathbb{T}^{d_Y}}\varphi\big(y\big)\rho_{\bar{x}}\big(y\big)\,dy\quad \forall y\in \mathbb{T}^{d_Y}.
  	\end{equation}
  \end{definition}
  \begin{theorem}[\protect{\cite[Theorem 6.16]{PavliotisStuart}}]
  	\label{tm:ergodicity_markov_process}
  	Assume that hypothesis II holds. Then the Markov process $\big(Y(t)\big)_{t\geq0}$ generated by the SDE \eqref{eq:microscale_stochastic_dynamical_system_frozen_macro_variable} is ergodic.
  \end{theorem}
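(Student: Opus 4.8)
The statement is precisely \cite{Stuart}[Theorem 6.16] applied to the frozen system \eqref{eq:microscale_stochastic_dynamical_system_frozen_macro_variable}, so the plan is to verify that hypothesis II supplies exactly the structural conditions required there and to recall the chain of arguments behind it. Fix $\bar{x}\in\mathbb{X}$. Hypothesis II provides two crucial facts: the coefficients $\mu_Y(\bar{x},\cdot)$ and $\sigma_Y(\bar{x},\cdot)$ are of class $\text{C}^2$ on the compact manifold $\mathbb{T}^{d_Y}$, and the diffusion matrix $\big[\sigma_Y\sigma_Y^\top\big](\bar{x},\cdot)$ is uniformly positive definite, so $\mathcal{L}^{\bar{x}}$ is a uniformly elliptic operator on $\mathbb{T}^{d_Y}$ with no zeroth-order term. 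I would organize the argument in three steps: first, solve the stationary Fokker--Planck equation \eqref{eq:fokker_planck_PDE}; second, deduce uniqueness of the invariant probability measure; third, upgrade this to the Cesàro limit \eqref{eq:microscale_ergodic_limit}, uniformly in the initial condition $y$.

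For the first step, since $\mathcal{L}^{\bar{x}}$ annihilates the constants and the strong maximum principle on the closed manifold $\mathbb{T}^{d_Y}$ forces any element of $\ker\mathcal{L}^{\bar{x}}$ to be constant, the Fredholm alternative (the operator being elliptic of index zero on a compact manifold) shows that $\ker\mathcal{L}^{\bar{x},*}$ is one-dimensional. Elliptic regularity, using that the coefficients are $\text{C}^2$, places a generator $\rho$ of this kernel in $\text{C}^2(\mathbb{T}^{d_Y})$, so it is a classical solution of \eqref{eq:fokker_planck_PDE}; the Harnack inequality forces $\rho$ to have constant sign and to be nowhere zero, and compactness of $\mathbb{T}^{d_Y}$ then yields $\inf_y\rho(y)>0$ after normalizing $\int_{\mathbb{T}^{d_Y}}\rho\,dy=1$. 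This produces the unique classical solution $\rho_{\bar{x}}$ demanded by Definition \ref{df:ergodic_markov_processes}, and $\mu_{\bar{x}}(dy):=\rho_{\bar{x}}(y)\,dy$ is an invariant probability measure for the Markov process $\big(Y(t)\big)_{t\geqslant0}$.

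For the second and third steps, uniform ellipticity implies that the transition kernel $P_t(y,\cdot)$ admits a smooth, strictly positive density (parabolic Harnack estimate), hence the semigroup $(P_t)_{t\geqslant0}$ is strong Feller and the process is topologically irreducible; since any invariant probability measure necessarily solves the stationary Fokker--Planck equation, the one-dimensionality obtained above forces $\mu_{\bar{x}}$ to be the unique invariant measure, i.e.\ the process is uniquely ergodic. The strong Feller property makes the family $\{P_t\varphi:t\geqslant 1\}$ equicontinuous for each $\varphi\in\text{C}(\mathbb{T}^{d_Y})$, so Arzelà--Ascoli together with uniqueness of the invariant measure gives $P_t\varphi\to\int_{\mathbb{T}^{d_Y}}\varphi\,\rho_{\bar{x}}\,dy$ uniformly on $\mathbb{T}^{d_Y}$; averaging in time then yields \eqref{eq:microscale_ergodic_limit} with the limit attained uniformly in $y$. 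The only genuinely delicate point is this last uniformity---that the Cesàro averages converge to the same constant independently of the starting point $y$---but that is exactly where compactness of the torus and the strong Feller property are used, and it is built into the cited \cite{Stuart}[Theorem 6.16].
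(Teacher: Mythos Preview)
Your sketch is correct and follows the standard route (Fredholm alternative plus strong maximum principle for existence and uniqueness of the invariant density, Harnack for strict positivity, strong Feller plus irreducibility on the compact torus for unique ergodicity, equicontinuity for the uniform Ces\`aro limit). Note, however, that the paper does not prove this theorem at all: it is stated purely as a citation of \cite{Stuart}[Theorem~6.16], with no argument given. So there is nothing to compare against beyond observing that hypothesis~II (periodic $\text{C}^{0,2}$ coefficients and uniform ellipticity of $\sigma_Y\sigma_Y^\top$) matches the assumptions of the cited result, which you correctly verify at the outset.
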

  
  A relevant remark is that such results can be obtained in more general contexts than the torus $\mathbb{T}^{d_Y}$, for example, for general compact spaces. It is even possible to obtain results for non-compact spaces, but in this case, it is necessary to study the vanishing of the tail of invariant densities at infinity. See  \cite[Chapter 4]{Pavliotis} for some interesting examples in this context and also  \cite[Chapter 4]{Khasminskii} for more general results involving only non absolutely continuous invariant probability distributions.
  
  Now consider, the equations \eqref{eq:fokker_planck_PDE} and \eqref{eq:microscale_ergodic_limit}. First, note that if these equations hold, then the stochastic dynamical system \eqref{eq:microscale_stochastic_dynamical_system_frozen_macro_variable} is uniquely ergodic according to Definition \ref{df:ergodic_uniqueness_cell_t_problem} with constant $c_{\varphi}\in\mathbb{R}$ given by
  \begin{equation*}
  	c_{\varphi}=\int_{\mathbb{T}^{d_Y}}\varphi\big(y\big)\rho_{\bar{x}}\big(y\big)\,dy.
  \end{equation*}
  This implies, by the equation \eqref{eq:uniquely_ergodic_effective_Hamiltonian} (in the definition of the effective Hamiltonian), that
  \begin{equation}
  	\label{eq:effective_Hamiltonian}
  	\mathcal{\overline{H}}\Big(\bar{x},g_{\bar{x}},H_{\bar{x}}\Big)=\int_{\mathbb{T}^{d_Y}}\mathfrak{H}_p\big(y\big)\rho_{\bar{x}}\big(y\big)\,dy.
  \end{equation}
  We want to rewrite the effective Hamiltonian \eqref{eq:effective_Hamiltonian} in the form \eqref{eq:explicit_effective_Hamiltonian}. To do this, we rewrite appropriately $\mathfrak{H}_p$ defined in \eqref{eq:microscale_operation_cost},
  \begin{align*}
  	\mathcal{G}_{X}^{u}\big(\bar{x},g_{\bar{x}},H_{\bar{x}}\big|y\big)=\bigg\langle \mu_{\text{SC}}\big(\bar{x},u\big),g_{\bar{x}}\bigg\rangle+\bigg\langle\mu_{\text{SF}}\big(\bar{x},y\big),g_{\bar{x}}\bigg\rangle+&\frac{1}{2}\text{Tr}\Big(H_{\bar{x}}\big[\sigma_{\text{SC}}\sigma^{\top}_{\text{SC}}\big]\big(\bar{x},u\big)\Big)+\frac{1}{2}\text{Tr}\Big(H_{\bar{x}}\big[\sigma_{\text{SF}}\sigma^{\top}_{\text{SF}}\big]\big(\bar{x},y\big)\Big)\\
  	+&\frac{1}{2}\text{Tr}\Big(H_{\bar{x}}\big[\sigma_{\text{SC}}\big(\bar{x},u\big)\sigma^{\top}_{\text{SF}}\big(\bar{x},y\big)+\sigma_{\text{SF}}\big(\bar{x},y\big)\sigma^{\top}_{\text{SC}}\big(\bar{x},u\big)\big]\Big).
  \end{align*}
  By equality
  \eqref{eq:noncorrelation_between_parts_multiscale_dispersion} in hypothesis I of Section~\S\ref{sec:multiscale_stochastic_optimal_control}, it follows that
  \begin{equation*}
  	\mathcal{G}_{X}^{u}\big(\bar{x},g_{\bar{x}},H_{\bar{x}}\big|y\big)=\bigg\langle \mu_{\text{SC}}\big(\bar{x},u\big),g_{\bar{x}}\bigg\rangle+\bigg\langle\mu_{\text{SF}}\big(\bar{x},y\big),g_{\bar{x}}\bigg\rangle+\frac{1}{2}\text{Tr}\Big(H_{\bar{x}}\big[\sigma_{\text{SC}}\sigma^{\top}_{\text{SC}}\big]\big(\bar{x},u\big)\Big)+\frac{1}{2}\text{Tr}\Big(H_{\bar{x}}\big[\sigma_{\text{SF}}\sigma^{\top}_{\text{SF}}\big]\big(\bar{x},y\big)\Big).	
  \end{equation*}
  Thus, the function $\mathfrak{H}_p$ can be rewritten as,
  \begin{align}
  	\nonumber
  	\mathfrak{H}_p\big(y\big)=\min_{u\in \mathbb{U}}\Bigg\{&\bigg\langle \mu_{\text{SC}}\big(\bar{x},u\big),g_{\bar{x}}\bigg\rangle+\frac{1}{2}\text{Tr}\Big(H_{\bar{x}}\big[\sigma_{\text{SC}}\sigma^{\top}_{\text{SC}}\big]\big(\bar{x},u\big)\Big)+L_{\text{SC}}\big(\bar{x},u\big)\Bigg\}\\
  	\label{eq:microscale_operation_cost_02}
  	+&\bigg\langle\mu_{\text{SF}}\big(\bar{x},y\big),g_{\bar{x}}\bigg\rangle+\frac{1}{2}\text{Tr}\Big(H_{\bar{x}}\big[\sigma_{\text{SF}}\sigma^{\top}_{\text{SF}}\big]\big(\bar{x},y\big)\Big)+L_{\text{SF}}\big(\bar{x},y\big).
  \end{align}
  Combining \eqref{eq:effective_Hamiltonian} with \eqref{eq:microscale_operation_cost_02}, it follows that
  \begin{align*}	
  	\mathcal{\overline{H}}\Big(\bar{x},g_{\bar{x}},H_{\bar{x}}\Big)=&\min_{u\in \mathbb{U}}\Bigg\{\bigg\langle \mu_{\text{SC}}\big(\bar{x},u\big),g_{\bar{x}}\bigg\rangle+\frac{1}{2}\text{Tr}\Big(H_{\bar{x}}\big[\sigma_{\text{SC}}\sigma^{\top}_{\text{SC}}\big]\big(\bar{x},u\big)\Big)+L_{\text{SC}}\big(\bar{x},u\big)\Bigg\}\\
  	+&\Bigg\langle\int_{\mathbb{T}^{d_Y}}\mu_{\text{SF}}\big(\bar{x},y\big)\rho_{\bar{x}}\big(y\big)dy,g_{\bar{x}}\Bigg\rangle+\frac{1}{2}\text{Tr}\Big(H_{\bar{x}}\int_{\mathbb{T}^{d_Y}}\big[\sigma_{\text{SF}}\sigma^{\top}_{\text{SF}}\big]\big(\bar{x},y\big)\rho_{\bar{x}}\big(y\big)dy\Big)+\int_{\mathbb{T}^{d_Y}}L_{\text{SF}}\big(\bar{x},y\big)\rho_{\bar{x}}\big(y\big)dy.
  \end{align*}
  Defining the effective functions:
  \begin{subequations}
  	\label{subeq:effective_functions}
  	\begin{align}
  		\label{eq:effective_drift}
  		\overline{\mu}_{\overline{X}}\big(\bar{x}\big):=&\int_{\mathbb{T}^{d_Y}}\mu_{\text{SF}}\big(\bar{x},y\big)\rho_{\bar{x}}\big(y\big)dy,\\
  		\label{eq:effective_diffusion}	
  		\overline{\sigma}_{\overline{X}}\big(\bar{x}\big)\overline{\sigma}_{\overline{X}}\big(\bar{x}\big)^{\top}=&\int_{\mathbb{T}^{d_Y}}\big[\sigma_{\text{SF}}\sigma^{\top}_{\text{SF}}\big]\big(\bar{x},y\big)\rho_{\bar{x}}\big(y\big)dy,\\
  		\label{eq:effective_operation_cost}		
  		\overline{L}\big(\bar{x}\big):=&\int_{\mathbb{T}^{d_Y}}L_{\text{SF}}\big(\bar{x},y\big)\rho_{\bar{x}}\big(y\big)dy,
  	\end{align}
  \end{subequations}
  where, in the equation \eqref{eq:effective_diffusion} we are assuming that there is a matrix $\overline{\sigma}_{\overline{X}}:\mathbb{X}\to \mathbb{M}^{d_X\times d_W}$ such that the equality in \eqref{eq:effective_diffusion} holds. Using these functions, we obtain the effective Hamiltonian presented in the formula  \eqref{eq:explicit_effective_Hamiltonian}. 
  
  To apply the results obtained in \cite[Section 2]{CalixtoCostaValle2025} to problem \eqref{subeq:effective_stochastic_optimal_control_problem}, we need to ensure that the effective functions $\overline{\mu}_{\overline{X}}$, $\overline{\sigma}_{\overline{X}}$ and $\overline{L}$ fulfill hypotheses $\textbf{H}_1$ and $\textbf{H}_2$. Note that it is sufficient to show that the functions defined in \eqref{subeq:effective_functions} belong to the space $\text{Lip}\big(\mathbb{X}\big)$. The simple case in which this occurs is when the drift $\mu_{Y}$ and dispersion $\sigma_{Y}$
  do not depend on the slow variable. In fact, in this case the invariant density $\rho$ does not depend on the parameter $\bar{x}$.
  
  When the invariant density $\rho$ depends on the parameter $\bar{x}$ it is necessary to investigate under what conditions this dependence allows Lipschitz continuity of the functions defined in \eqref{subeq:effective_functions}. We claim that it is sufficient for the application $D_x\rho$ to exist and be bounded in $\mathbb{X}\times\mathbb{T}^{d_Y}$. In fact, since $D_x\rho$ is bounded in $\mathbb{X}\times\mathbb{T}^{d_Y}$, there exists a constant $C>0$ such that 
  \begin{equation*}
  	\big|\rho\big(x_1,y\big)-\rho\big(x_2,y\big)\big|\leqslant C\|x_1-x_2\|\quad \forall y\in \mathbb{T}^{d_Y}.
  \end{equation*}
  In other words, the density $\rho$ is Lipschitz continuous with respect to the parameter. 
    
  On the other hand, since $D_x\rho$ exists for all $(x,y)\in \mathbb{X}\times \mathbb{T}^{d_Y}$, it follows that $\rho:\mathbb{X}\times \mathbb{T}^{d_Y}\to \mathbb{R}_{+}$ is bounded. So we have a continuous and bounded Lipschitz function. Since the fields $\mu_{\text{SF}}$, $\sigma_{\text{SF}}$ and $L_{\text{SF}}$ are also Lipschitz continuous and bounded, we obtain that the applications:
  \begin{equation*}
  	x\mapsto \mu_{\text{SF}}\big(x,y\big)\rho_x\big(y\big),\quad
  	x\mapsto \sigma_{\text{SF}}\big(x,y\big)\rho_x\big(y\big)\quad\text{and}\quad
  	x\mapsto L_{\text{SF}}\big(x,y\big)\rho_x\big(y\big)
  \end{equation*}
   are, uniformly in $y$, Lipschitz continuous.  This implies, for example, for the function $\overline{\mu}_{\overline{X}}:\mathbb{X}\to \mathbb{R}^{d_X}$ that
   \begin{align*}
   	\|\overline{\mu}_{\overline{X}}\big(x_1\big)-\overline{\mu}_{\overline{X}}\big(x_2\big)\|^2&\leqslant \int_{\mathbb{T}^{d_Y}}\|\mu_{\text{SF}}\big(x_1,y\big)\rho_{x_1}\big(y\big)-\mu_{\text{SF}}\big(x_2,y\big)\rho_{x_2}\big(y\big)\|^2\,dy\leqslant C^2\lambda\big(\mathbb{T}^{d_Y}\big)\|x_1-x_2\|^2.
   \end{align*}
   That is,
   \begin{equation*}
   	\|\overline{\mu}_{\overline{X}}\big(x_1\big)-\overline{\mu}_{\overline{X}}\big(x_2\big)\|\leqslant C\sqrt{\lambda\big(\mathbb{T}^{d_Y}\big)}\|x_1-x_2\|
   \end{equation*}
   where $\lambda:\mathbb{T}^{d_Y}\to \mathbb{R}_{+}$ is the Lebesgue measure.  
    
   Before proceeding to demonstrate the main result of this subsection, we will make the following extra hypothesis.
   \begin{itemize}
   	\item[VI] The drift $\mu_{Y}$ and dispersion $\sigma_{Y}$  of the fast variable belong to the space $\text{C}^{1,m+2}\big(\mathbb{X}\times\mathbb{T}^{d_Y}\big)$ with $m>d_Y/2$ and are such that the following inequality holds
   	\begin{equation}
   		\label{eq:maximum_principle_condition}
   		\sum_{i,j}^{d_Y}\frac{1}{2}\partial^2_{y_i,y_j}\big[\sigma_{Y}\sigma^{T}_{Y}\big]_{ij}\big(x,y\big)-\sum_{i=1}^{d_Y}\partial_{y_i}\mu_{Y,i}\big(x,y\big)\leqslant 0\quad\forall (x,y)\in \mathbb{X}\times\mathbb{T}^{d_Y}.
   	\end{equation}
   	In addition, we consider that the parameter $p\in[1,+\infty)$ (of the spaces $\text{L}^{\text{p}}$) is $p>d_Y$.
   \end{itemize}
   \begin{lemma}[Differentiability of Fokker-Plack \eqref{eq:fokker_planck_PDE} with respect to parameters -- \protect{Appendix \ref{sec:appendix_A}}]
   	\label{lm:differentiability_fokker_plack_parameters}
   	Assume that hypotheses II and VI hold. Then the classical solution of the PDE \eqref{eq:fokker_planck_PDE} is differentiable with respect to the slow scale parameter.
   \end{lemma}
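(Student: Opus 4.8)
The plan is to regard the stationary Fokker--Planck equation \eqref{eq:fokker_planck_PDE} as an implicit equation for $\rho$ parametrised by $\bar{x}\in\mathbb{X}$ and to apply the implicit function theorem in Banach spaces. Fix $p>d_Y$, work with densities in $W^{2,p}(\mathbb{T}^{d_Y})$ (which, since $p>d_Y$, embeds in $C^{1,\alpha}$, and by Schauder regularity the normalised solution of \eqref{eq:fokker_planck_PDE} is in fact classical, as already guaranteed by Theorem \ref{tm:ergodicity_markov_process}), and denote by $L^p_0(\mathbb{T}^{d_Y})$ the closed subspace of zero-mean $L^p$ functions. Define
\begin{equation*}
  \Phi:\mathbb{X}\times W^{2,p}\big(\mathbb{T}^{d_Y}\big)\to L^p_0\big(\mathbb{T}^{d_Y}\big)\times\mathbb{R},\qquad
  \Phi\big(\bar{x},\rho\big):=\Bigg(\mathcal{L}^{\bar{x},*}\rho,\ \int_{\mathbb{T}^{d_Y}}\rho\big(y\big)dy-1\Bigg).
\end{equation*}
The first component indeed lands in $L^p_0$ since $\int_{\mathbb{T}^{d_Y}}\mathcal{L}^{\bar{x},*}\rho\,dy=\int_{\mathbb{T}^{d_Y}}\big(\mathcal{L}^{\bar{x}}1\big)\,\rho\,dy=0$, and by Theorem \ref{tm:ergodicity_markov_process} the normalised classical solution $\rho_{\bar{x}}$ exists, is unique, and satisfies $\Phi\big(\bar{x},\rho_{\bar{x}}\big)=(0,0)$.

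Next I would check that $\Phi$ is continuously Fréchet differentiable. Differentiability in $\rho$ is immediate, the map being affine and bounded; differentiability in $\bar{x}$ is where hypothesis VI enters: the coefficients $\mu_Y(\cdot,y)$ and $[\sigma_Y\sigma_Y^{\top}](\cdot,y)$, together with the $y$-derivatives of these appearing in the divergence-form operator $\mathcal{L}^{\bar{x},*}$, depend $C^1$ on $\bar{x}$, and because $\mu_Y,\sigma_Y\in\text{C}^{1,m+2}$ with $m>d_Y/2$ the $\bar{x}$-derivatives retain enough $y$-regularity for the resulting operator to act boundedly from $W^{2,p}$ into $L^p$; hence $\bar{x}\mapsto\mathcal{L}^{\bar{x},*}\in\mathcal{B}\big(W^{2,p},L^p\big)$ is of class $C^1$. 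The partial derivative at the solution is
\begin{equation*}
  D_\rho\Phi\big(\bar{x},\rho_{\bar{x}}\big)\big[\psi\big]=\Bigg(\mathcal{L}^{\bar{x},*}\psi,\ \int_{\mathbb{T}^{d_Y}}\psi\big(y\big)dy\Bigg).
\end{equation*}

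The crux, and the step I expect to be the main obstacle, is proving that $D_\rho\Phi\big(\bar{x},\rho_{\bar{x}}\big)$ is a topological isomorphism from $W^{2,p}(\mathbb{T}^{d_Y})$ onto $L^p_0(\mathbb{T}^{d_Y})\times\mathbb{R}$. Since $\mathbb{T}^{d_Y}$ is a closed manifold and $\mathcal{L}^{\bar{x},*}$ is uniformly elliptic by hypothesis II, it is Fredholm of index zero between the relevant Sobolev spaces; its range is precisely $L^p_0$, being the annihilator of $\ker\mathcal{L}^{\bar{x}}$, the constants; and its kernel is exactly the one-dimensional span of $\rho_{\bar{x}}$, which is where the ergodicity (simplicity of the principal eigenvalue $0$, via a Krein--Rutman / strong maximum principle argument) together with the structural condition \eqref{eq:maximum_principle_condition} — which makes the zeroth-order coefficient of $\mathcal{L}^{\bar{x},*}$ nonpositive and so legitimises that argument — is decisive. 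Injectivity of $D_\rho\Phi$: if $\mathcal{L}^{\bar{x},*}\psi=0$ then $\psi=c\,\rho_{\bar{x}}$, and the constraint $\int\psi=0$ forces $c=0$. Surjectivity: given $(g,a)\in L^p_0\times\mathbb{R}$, solvability of $\mathcal{L}^{\bar{x},*}\psi=g$ holds because $g$ has zero mean, and the remaining free multiple of $\rho_{\bar{x}}$ is fixed by imposing $\int\psi=a$. The open mapping theorem then supplies a bounded inverse.

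Granting the isomorphism, the implicit function theorem produces, near every $\bar{x}_0\in\mathbb{X}$, a $C^1$ map $\bar{x}\mapsto\rho_{\bar{x}}\in W^{2,p}(\mathbb{T}^{d_Y})$, and uniqueness of the normalised solution makes this local representation global on $\mathbb{X}$. Interior elliptic regularity and the Sobolev embedding then give that $D_x\rho$ exists, is jointly continuous on $\mathbb{X}\times\mathbb{T}^{d_Y}$, and — $\mathbb{X}\times\mathbb{T}^{d_Y}$ being compact — is bounded there, which is exactly the property used to obtain the Lipschitz continuity of the effective coefficients \eqref{subeq:effective_functions}. As a consistency check, differentiating \eqref{eq:fokker_planck_PDE} formally in $x_i$ yields $\mathcal{L}^{\bar{x},*}\big(\partial_{x_i}\rho\big)=-\big(\partial_{x_i}\mathcal{L}^{\bar{x},*}\big)\rho$ with $\int_{\mathbb{T}^{d_Y}}\partial_{x_i}\rho\,dy=0$; the right-hand side is again a divergence-form term, hence of zero mean, so this linear problem lies in the range of $\mathcal{L}^{\bar{x},*}$ and identifies the derivative delivered by the implicit function theorem.
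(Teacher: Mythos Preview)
Your implicit–function–theorem approach is correct and takes a genuinely different route from the paper's proof. The paper (Appendix \ref{sec:appendix_A}) argues constructively in two stages: it first establishes continuity of $\bar{x}\mapsto\rho_{\bar{x}}$ by deriving uniform $\text{C}_{\text{hol}}^{2,1}$ bounds on $\{\rho(\bar{x}+vh_n,\cdot)\}$ from elliptic estimates (Theorem \ref{tm:interior_regularity}, Theorem \ref{tm:embedded_H_m_C_k}) and extracting limits via Arzel\`a--Ascoli; it then forms the difference quotient $\rho^{h,k}:=(\rho(\bar{x}+e_kh,\cdot)-\rho(\bar{x},\cdot))/h$, shows that it solves $-\mathcal{L}^{\bar{x},*}\rho^{h,k}=f^{h,k}$ for an explicit zero-mean source $f^{h,k}$, obtains uniform H\"older bounds on $\rho^{h,k}$ through the $L^2$ energy estimate of Lemma \ref{lm:first_energy_estimate} combined with \eqref{eq:auxiliary_inequality_sobolev} and Theorem \ref{tm:rellich_kondrachov}, and again passes to the limit by Arzel\`a--Ascoli; uniqueness for the limiting equation (Lemma \ref{lm:existence_unicity_periodic_solutions_EDP}, where \eqref{eq:maximum_principle_condition} enters via the weak maximum principle) identifies the limit as $\partial_{x_k}\rho$. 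Your route compresses all of this into the single Fredholm/isomorphism check and delivers $C^1$ dependence in one stroke, which is cleaner; the paper's route is longer but produces explicit quantitative bounds on $\|D_x\rho\|$ along the way, which is what is actually consumed downstream for the Lipschitz continuity of the effective coefficients. One small remark: the one-dimensionality of $\ker\mathcal{L}^{\bar{x},*}$ already follows from Fredholm index zero together with $\ker\mathcal{L}^{\bar{x}}=\{\text{constants}\}$ (the latter needing only uniform ellipticity, since $\mathcal{L}^{\bar{x}}$ has no zeroth-order term), so your appeal to \eqref{eq:maximum_principle_condition} at that particular step is not strictly required---though it is precisely the hypothesis the paper relies on for its own uniqueness lemma.
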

  \begin{theorem}[Effective Optimal Value function as Viscosity Solution]
  	\label{tm:effective_optimal_value_function_viscosity_solution} The effective optimal value function \eqref{eq:effective_optimum_value_function} is the unique viscosity solution of the effective HJB equation \eqref{subeq:effective_HJB_Von_Neumann_boundary}.
  \end{theorem}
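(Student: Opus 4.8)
The plan is to recognise the effective control problem \eqref{subeq:effective_stochastic_optimal_control_problem} as a particular instance of the state-constrained stochastic optimal control problem analysed in Section~\ref{sec:stochastic_optimal_control_restrictions_state_variables}: it evolves on the very same domain $\mathbb{X}$, with the same defining function $\phi$, so the geometric conditions on the domain are automatically met. It therefore suffices to verify that the effective coefficients — the drift $\overline{\mu}_{\overline{X}}+\mu_{\text{SC}}$, the dispersion $\overline{\sigma}_{\overline{X}}+\sigma_{\text{SC}}$, the running cost $\overline{L}+L_{\text{SC}}$ and the boundary cost $h$ — satisfy hypotheses $\text{H}_1$ through $\text{H}_4$. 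Once this is done, Theorem~\ref{tm:existence_viscosity_solution_pde_hjb} gives that $\overline{v}^{\beta}$ is a viscosity solution of \eqref{subeq:effective_HJB_Von_Neumann_boundary}, and Theorem~\ref{tm:unicity_PVC_Von_Neumann} (whose structural requirements for the effective $F^{\beta}$ and $\Gamma$ are the same ones already invoked in Section~\ref{sec:stochastic_optimal_control_restrictions_state_variables}) upgrades this to uniqueness.

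For $\text{H}_1$ and $\text{H}_2$ I would split each effective coefficient into its control-dependent part ($\mu_{\text{SC}}$, $\sigma_{\text{SC}}$, $L_{\text{SC}}$), which is continuous, bounded and Lipschitz in $x$ uniformly in $u$ by items~I and~III, and its averaged part ($\overline{\mu}_{\overline{X}}$, $\overline{\sigma}_{\overline{X}}$, $\overline{L}$) defined in \eqref{subeq:effective_functions}. Boundedness of the averaged parts is immediate, since $\mu_{\text{SF}}$, $\sigma_{\text{SF}}$, $L_{\text{SF}}$ are bounded and $\int_{\mathbb{T}^{d_Y}}\rho_{\bar{x}}(y)\,dy=1$. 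For Lipschitz continuity one estimates, as in the discussion preceding Lemma~\ref{lm:differentiability_fokker_plack_parameters}, the difference $\mu_{\text{SF}}(x_1,y)\rho_{x_1}(y)-\mu_{\text{SF}}(x_2,y)\rho_{x_2}(y)$ by the Lipschitz constants of $\mu_{\text{SF}}$ and of $x\mapsto\rho_x(y)$; the latter is controlled by $\sup_{\mathbb{X}\times\mathbb{T}^{d_Y}}\|D_x\rho\|$, which is finite because $D_x\rho$ exists by Lemma~\ref{lm:differentiability_fokker_plack_parameters} and is continuous on the compact set $\mathbb{X}\times\mathbb{T}^{d_Y}$. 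The same argument handles $\overline{L}$ and the effective diffusion matrix $\overline{\sigma}_{\overline{X}}\overline{\sigma}_{\overline{X}}^{\top}$; to pass from the diffusion matrix to a Lipschitz dispersion matrix $\overline{\sigma}_{\overline{X}}$ satisfying \eqref{eq:effective_diffusion} I would invoke the factorisation constructed in Appendix~\ref{sec:appendix_B}. Since sums of bounded Lipschitz maps are bounded Lipschitz, $\text{H}_1$ and $\text{H}_2$ follow, while $\text{H}_3$ is exactly item~IV.

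The verification of $\text{H}_4$ is where the structural hypotheses do their work. The control enters the effective system \eqref{subeq:effective_stochastic_optimal_control_problem} only through $\mu_{\text{SC}}$, $\sigma_{\text{SC}}$ and $L_{\text{SC}}$ — the very same SC-components appearing in the multiscale problem — while $\overline{\mu}_{\overline{X}}$ and $\overline{\sigma}_{\overline{X}}$ are control-independent. Hence the map $\theta_{\varphi}(x,u)=\mathcal{L}^{u}_{\overline{X}}\varphi(x)+(\overline{L}+L_{\text{SC}})(x,u)$ built from the effective analogue of the generator \eqref{eq:auxiliary_equation_08} differs from its multiscale counterpart only by terms that are affine — indeed independent — in $u$ and Lipschitz in $x$, so $D^2_u\theta_{\varphi}$, hence its uniform positive-definiteness and the Lipschitz-in-$x$ regularity of $D_u\theta_{\varphi}(\cdot,u)$, are unchanged. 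Consequently the structural assumptions packaged in $\text{H}_4$ for the multiscale problem, postulated as item~V, transfer verbatim to the effective problem, and Lemma~\ref{lm:Lipschitz_controls} applies. I would also record here that the noncorrelation identity \eqref{eq:noncorrelation_between_parts_multiscale_dispersion}, used in the cell-problem derivation of Subsection~\ref{subsubsec:ergodic_theory_Markov_processes} and compatible — via Appendix~\ref{sec:appendix_B} — with the factorisation of $\overline{\sigma}_{\overline{X}}$, is precisely what guarantees that the Hamiltonian \eqref{eq:explicit_effective_Hamiltonian} coincides with the effective Hamiltonian $\mathcal{\overline{H}}$ of \eqref{eq:effective_Hamiltonian}, so that the value function \eqref{eq:effective_optimum_value_function} is genuinely attached to \eqref{subeq:effective_HJB_Von_Neumann_boundary}.

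With $\text{H}_1$–$\text{H}_4$ in hand, Theorem~\ref{tm:existence_viscosity_solution_pde_hjb} yields that $\overline{v}^{\beta}$ is a viscosity solution of \eqref{subeq:effective_HJB_Von_Neumann_boundary} and Theorem~\ref{tm:unicity_PVC_Von_Neumann} yields uniqueness, completing the proof. The main obstacle is not the abstract assembly but the two analytic inputs it rests on: the parameter-regularity of the invariant density (Lemma~\ref{lm:differentiability_fokker_plack_parameters}, Appendix~\ref{sec:appendix_A}) and, above all, the existence of a \emph{Lipschitz} dispersion matrix $\overline{\sigma}_{\overline{X}}$ with $\overline{\sigma}_{\overline{X}}\overline{\sigma}_{\overline{X}}^{\top}$ given by \eqref{eq:effective_diffusion} — delicate because matrix square roots are in general only Hölder-$\tfrac{1}{2}$, so one must exploit the explicit structure of the effective diffusion (Appendix~\ref{sec:appendix_B}) rather than taking a naive symmetric square root.
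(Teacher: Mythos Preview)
Your proposal is correct and follows essentially the same approach as the paper: verify that the effective coefficients satisfy hypotheses $\text{H}_1$--$\text{H}_4$ (via the Lipschitz regularity of the averaged parts, which rests on Lemma~\ref{lm:differentiability_fokker_plack_parameters} and Appendix~\ref{sec:appendix_B}) and then invoke Theorems~\ref{tm:existence_viscosity_solution_pde_hjb} and~\ref{tm:unicity_PVC_Von_Neumann}. Your write-up is in fact more explicit than the paper's own proof---particularly in articulating why $\text{H}_4$ transfers from the multiscale to the effective problem and in flagging the Lipschitz factorisation of the effective diffusion as the delicate analytic point---but the logical skeleton is identical.
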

  \begin{proof}
  		The functions, defined in \eqref{subeq:effective_functions}, belong to the space $\text{Lip}\big(\mathbb{X}\big)$. In this case, the drift $\overline{\mu}_{\overline{X}}+\mu_{\text{SC}}$ and dispersion $\overline{\sigma}_{\overline{X}}+\sigma_{\text{SC}}$ satisfy hypothese $\textbf{H}_1$, which implies that there exists a unique pair of processes $\big(\overline{X}(t),\bar{l}(t)\big)_{t\geq0}$ which is a strong solution of the effective SDE \eqref{eq:effective_stochastic_dynamic_system}.  On the other hand, the effective operation cost $\overline{L}+L_{\text{SC}}$ satisfies hypothese $\textbf{H}_2$. Thus, by \cite[Lemma 2.6]{CalixtoCostaValle2025}, the family $\big(\bar{x}\mapsto \overline{J}_{\bar{x}}^{\beta}\big(\overline{u}\big)\big)_{\overline{u}\in\mathcal{U}}$ is equicontinuous. Furthermore, the effective optimal value function \eqref{eq:effective_optimum_value_function} is continuous by \cite[Theorem 2.7]{CalixtoCostaValle2025}. Finally, we have that the functions $\overline{\mu}_{\overline{X}}+\mu_{\text{SC}}$, $\overline{\sigma}_{\overline{X}}+\sigma_{\text{SC}}$ and $\overline{L}+L_{\text{SC}}$ fulfill hypothese $\textbf{H}_4$, therefore, it follows by \cite[Theorem 2.11]{CalixtoCostaValle2025}, that the effective optimal value function \eqref{eq:effective_optimum_value_function} is a viscosity solution of the effective HJB equation \eqref{subeq:effective_HJB_Von_Neumann_boundary} and, by \cite[Theorem 2.12]{CalixtoCostaValle2025}, this solution is unique.
  \end{proof}
  \section{Convergence of the Multiscale Optimal Value Function to the Effective Optimal Value Function}
  \label{sec:convergence_multiscale_optimal_value_function_effective_optimal_value_function}
  In this section, we prove the main result of the paper, namely that the family $\big(v^{\beta,\epsilon}\big)_{\epsilon>0}$ of multiscale optimal value functions converges uniformly to the effective optimal value function $\bar{v}^{\beta}$. To do this, we need some auxiliary results, starting with the following lemma.
  
  \begin{lemma}[Multiscale Optimal Value functions are equibounded]
  	\label{lm:multiscale_optimal_value_function_equibounded}
  	Consider items III and IV. Then the family $\big(v^{\beta,\epsilon}\big)_{\epsilon>0}$ of multiscale optimal value functions is equibounded.
  \end{lemma}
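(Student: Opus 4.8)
The plan is to bound $\bigl|J^{\beta,\epsilon}_{x,y}(u)\bigr|$ by a constant that is independent of $\epsilon>0$, of the control $u\in\mathcal{U}$, and of the initial condition $(x,y)\in\mathbb{X}\times\mathbb{T}^{d_Y}$; passing to the infimum over $u$ in \eqref{eq:multiscale_optimal_value_function} then gives the equiboundedness of $\bigl(v^{\beta,\epsilon}\bigr)_{\epsilon>0}$ at once. By item III the operation cost $L$ is continuous on the compact set $\mathbb{X}\times\mathbb{T}^{d_Y}\times\mathbb{U}$, hence bounded, and by item IV the preventive cost $h$ is continuous on the compact set $\partial\mathbb{X}$, hence bounded; let $C_0$ be a common bound. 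The running-cost part of \eqref{eq:functional_cost_multiscale_operation} is then controlled exactly as in \eqref{eq:auxiliary_inequality_14} of Lemma \ref{lm:functional_J_beta_well_defined}: Fubini's theorem bounds it by $C_0/\beta$, independently of $\epsilon$, $u$, $x$ and $y$.

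The heart of the matter is to obtain a bound $\mathbb{E}\bigl[l^{\epsilon}_{x,y}(t)\bigr]\leqslant C(1+t)$ that is \emph{uniform in $\epsilon$}. One cannot simply invoke estimate \eqref{eq:inequality_l_control_reflection} of Lemma \ref{lm:inequality_continuity_control_reflection} for the reflected pair solving \eqref{eq:auxiliary_multiscale_SDE_fast_slow_system}, since the coefficients $\widetilde{\mu}^{\epsilon}_{Z}$ and $\widetilde{\sigma}^{\epsilon}_{Z}$ carry the singular factors $1/\epsilon$ and $1/\sqrt{\epsilon}$, so the constant produced there degenerates as $\epsilon\to0$. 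The observation I would use instead is that $\widetilde{\phi}(x,y)=\phi(x)$ depends only on the slow variable. Applying Itô's formula to $\phi\bigl(X^{\epsilon}_x(\cdot)\bigr)$ along the slow SDE \eqref{eq:macroscale_stochastic_dynamical_system} --- exactly as in the computation \eqref{eq:auxiliary_equation_07} inside the proof of Lemma \ref{lm:continuity_processes_X_K_VarK} --- the second-order term involves only $\sigma_X$ and the drift term only $\mu_X$, and neither $\mu_Y$ nor $\sigma_Y$ enters, because $\phi$ has no $y$-dependence (no $\partial_y$ derivatives and no cross quadratic-variation contribution). Using $\|D_x\phi\|=1$ on $\partial\mathbb{X}$ and that $l^{\epsilon}_{x,y}$ is carried by $\{X^{\epsilon}_x\in\partial\mathbb{X}\}$, one rearranges the resulting identity into
\begin{equation*}
l^{\epsilon}_{x,y}(t)=\phi(x)-\phi\bigl(X^{\epsilon}_x(t)\bigr)+\int_{0}^{t}\Bigl[\bigl\langle\mu_X,D_x\phi\bigr\rangle+\tfrac{1}{2}\text{Tr}\bigl(D^2_x\phi\,\big[\sigma_X\sigma_X^{\top}\big]\bigr)\Bigr]\bigl(X^{\epsilon}_x(s),Y^{\epsilon}_y(s),u(s)\bigr)\,ds+M^{\epsilon}_t,
\end{equation*}
where $M^{\epsilon}$ is a martingale since its integrand $\partial_{x_i}\phi(X^{\epsilon}_x)\,\sigma_{X,il}(X^{\epsilon}_x,Y^{\epsilon}_y,u)$ is bounded ($\phi\in\text{C}^2_{\text{b}}$, $\sigma_X$ bounded by item I). Because $X^{\epsilon}_x(t)\in\mathbb{X}$ is compact the two $\phi$-terms are bounded by $2\|\phi\|_{\infty}$, and by item I the $ds$-integrand is bounded by a constant independent of $\epsilon$ and $u$; taking expectations --- with $\mathbb{E}\bigl[l^{\epsilon}_{x,y}(t)\bigr]<\infty$ for each fixed $\epsilon$ guaranteed by Lemma \ref{lm:inequality_continuity_control_reflection} applied to \eqref{eq:auxiliary_multiscale_SDE_fast_slow_system}, or by localizing at $\inf\{t:l^{\epsilon}_{x,y}(t)\geqslant N\}$ --- yields the claimed $\mathbb{E}\bigl[l^{\epsilon}_{x,y}(t)\bigr]\leqslant C(1+t)$ with $C$ independent of $\epsilon$, $u$, $x$ and $y$.

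With this uniform estimate in hand, the boundary-cost part of \eqref{eq:functional_cost_multiscale_operation} follows the computation in Lemma \ref{lm:functional_J_beta_well_defined}: integration by parts, Fubini, Fatou, $|h|\leqslant C_0$ and $\mathbb{E}\bigl[l^{\epsilon}_{x,y}(s)\bigr]\leqslant C(1+s)$ give $\mathbb{E}\bigl[\int_{0}^{+\infty}e^{-\beta s}|h(X^{\epsilon}_x(s))|\,dl^{\epsilon}_{x,y}(s)\bigr]\leqslant C_0C\beta\int_{0}^{+\infty}e^{-\beta s}(1+s)\,ds$, again independent of $\epsilon$, $u$, $x$ and $y$. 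Adding the two bounds shows $\bigl|J^{\beta,\epsilon}_{x,y}(u)\bigr|$ is uniformly bounded, and the infimum over $u$ closes the argument. I expect the only genuine obstacle to be the uniform-in-$\epsilon$ control of the reflection term described above; the rest is a direct transcription of the estimates already obtained in Section \ref{sec:stochastic_optimal_control_restrictions_state_variables}.
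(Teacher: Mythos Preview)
Your proposal is correct and follows the same overall strategy as the paper: bound the running-cost term by $C_0/\beta$ via boundedness of $L$, and bound the boundary-cost term via an estimate on $\mathbb{E}\bigl[l^{\epsilon}_{x,y}(t)\bigr]$ that is uniform in $\epsilon$. The paper's own proof is terser on the key point: it simply cites inequality \eqref{eq:auxiliary_inequality_05} (a second-moment bound on the boundary term, then Cauchy--Schwarz) and asserts that the resulting constant $\gamma$ depends only on $\beta$, $C$, and the Lipschitz constant of the \emph{slow} drift and dispersion, without spelling out why the $1/\epsilon$ factors in $\widetilde{\mu}^{\epsilon}_Z$, $\widetilde{\sigma}^{\epsilon}_Z$ do not contaminate the bound. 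Your explicit It\^o computation on $\phi\bigl(X^{\epsilon}_x(\cdot)\bigr)$---exploiting that $\widetilde{\phi}(x,y)=\phi(x)$ has no $y$-dependence, so $\mu_Y$ and $\sigma_Y$ never appear---is exactly the justification that makes this assertion rigorous, and it is the same mechanism underlying \eqref{eq:auxiliary_equation_07} in Lemma \ref{lm:continuity_processes_X_K_VarK}. The only cosmetic difference is that you work with the first moment (as in Lemma \ref{lm:functional_J_beta_well_defined}) whereas the paper uses the second moment and Cauchy--Schwarz; both routes are fine.
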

  \begin{proof}
  	From items III and IV, there exists a constant $C>0$ such that:
  	\begin{subequations}
  		\begin{align}
  			\label{eq:auxiliary_inequality_20}
  			\big|L\big(x,y,u\big)\big|&<C\hspace{0.15cm}\forall (x,y,u)\in \mathbb{X}\times\mathbb{T}^{d_Y}\times \mathbb{U},\\
  			\label{eq:auxiliary_inequality_21}
  			\big|h\big(x\big)\big|&<C\hspace{0.15cm}\forall x\in \partial\mathbb{X}.
  		\end{align}
  	\end{subequations}
  	On the other hand, given $0<\epsilon\ll 1$, it follows from Definition \eqref{eq:multiscale_optimal_value_function} that
  	\begin{equation*}
  		\big|v^{\beta,\epsilon}\big(x,y\big)\big|\leqslant\mathbb{E}\Bigg[\int_{0}^{+\infty}e^{-\beta s}\big|L\big(X^{\epsilon}_x(s),Y^{\epsilon}_y(s),u(s)\big)\big|\,ds\Bigg]+\mathbb{E}\Bigg[\bigg|\int_{0}^{+\infty}e^{-\beta s}h\big(X^{\epsilon}_x(s)\big)\,dl^{\epsilon}_{x,y}(s)\bigg|\Bigg]\quad \forall u\in\mathcal{U}.
  	\end{equation*}
  		Let's analyze the terms on the right-hand side of the above inequality. From the inequality \eqref{eq:auxiliary_inequality_20}, we have for the first term that
  		\begin{equation}
  			\label{eq:auxiliary_inequality_22}
  			\mathbb{E}\Bigg[\int_{0}^{+\infty}e^{-\beta s}\big|L\big(X^{\epsilon}_x(s),Y^{\epsilon}_y(s),u(s)\big)\big|\,ds\Bigg]\\
  			\leqslant\int_{0}^{+\infty}Ce^{-\beta s}ds\leqslant \frac{C}{\beta}.
  		\end{equation}
  	    Now, from the inequality \eqref{eq:auxiliary_inequality_21} and \cite[Inequality 15]{CalixtoCostaValle2025}, we have, for the second term, that
  	    \begin{equation*}
  	    	\mathbb{E}\Bigg[\bigg|\int_{0}^{+\infty}e^{-\beta s}h\big(X^{\epsilon}_x(s)\big)\,dl^{\epsilon}_{x,y}(s)\bigg|^2\Bigg]<\gamma
  	    \end{equation*}
  	    where $\gamma>0$ depends on the constants $\beta$ and $C$ as well as on the Lipschitz constant $N>0$ of the drift and dispersion of the slow variable. Using Cauchy-Schwarz's inequality, we get
  	    \begin{equation}
  	    \label{eq:auxiliary_inequality_23}
  	    	\mathbb{E}\Bigg[\bigg|\int_{0}^{+\infty}e^{-\beta s}h\big(X^{\epsilon}_x(s)\big)\,dl^{\epsilon}_{x,y}(s)\bigg|\Bigg]<\sqrt{\gamma}.
  	    \end{equation}
  	    By combining the inequalities \eqref{eq:auxiliary_inequality_22} and \eqref{eq:auxiliary_inequality_23}, we conclude that
  	    \begin{equation*}
  	    	\big|v^{\beta,\epsilon}\big(x,y\big)\big|\leqslant\frac{C}{\beta}+\sqrt{\gamma}\quad \forall(x,y)\in \mathbb{X}\times\mathbb{T}^{d_Y}\quad \text{and}\quad \forall \epsilon>0.
  	    \end{equation*}
  	    Therefore, the family $\big(v^{\beta,\epsilon}\big)_{\epsilon>0}$ is equibounded.
  \end{proof}
  \begin{theorem}[Uniform Convergence of $\big(v^{\beta,\epsilon}\big)_{\epsilon>0}$ to $\overline{v}^{\beta}$]
  	\label{tm:convergence_function_multiscale_value_effective}
  	Assume that items I through VI hold. Then the family $\big(v^{\beta,\epsilon}\big)_{\epsilon>0}$ of multiscale optimal value functions converges uniformly to the effective optimal value function $\overline{v}^{\beta}$.  
  \end{theorem}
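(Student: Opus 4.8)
The plan is to argue by the perturbed test function method of Evans, in the form developed for singular perturbations of stochastic control problems in \cite{Alvarez}, \cite{Bardi}, \cite{Cesaroni}, \cite{Manca}. I would work throughout with the reformulated problem on the compact set $\widetilde{\mathbb{X}}=\mathbb{X}\times\mathbb{T}^{d_Y}$, on which, by Subsection \ref{subsubsec:multiscale_optimal_value_function_viscosity_solution}, each $v^{\beta,\epsilon}$ is the unique viscosity solution of \eqref{subeq:auxiliary_HJB_Von_Neumann_boundary}. Lemma \ref{lm:multiscale_optimal_value_function_equibounded} makes $\big(v^{\beta,\epsilon}\big)_{\epsilon>0}$ equibounded, so the half-relaxed limits
\begin{equation*}
	\overline{v}\big(x,y\big):=\limsup_{\substack{\epsilon\downarrow0\\(x',y')\to(x,y)}}v^{\beta,\epsilon}\big(x',y'\big),\qquad\underline{v}\big(x,y\big):=\liminf_{\substack{\epsilon\downarrow0\\(x',y')\to(x,y)}}v^{\beta,\epsilon}\big(x',y'\big)
\end{equation*}
are finite, with $\overline{v}\in\text{USC}\big(\widetilde{\mathbb{X}}\big)$, $\underline{v}\in\text{LSC}\big(\widetilde{\mathbb{X}}\big)$ and $\underline{v}\leqslant\overline{v}$. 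The first step would be to check that $\overline{v}$ and $\underline{v}$ do not depend on the fast variable: this is the familiar consequence of the $\epsilon^{-1}$ scaling of the fast generator in \eqref{eq:auxiliary_HJB_equation} together with the uniform ellipticity of $\big[\sigma_{Y}\sigma^{\top}_{Y}\big]$ (hypothesis II) and the compactness and boundarylessness of $\mathbb{T}^{d_Y}$, so that testing against functions of $y$ alone and sending $\epsilon\downarrow0$ forces the limits to be constant in $y$; one then writes $\overline{v}(x)$, $\underline{v}(x)$ for the induced functions on $\mathbb{X}$.

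The core of the argument would be to show that $\overline{v}$ is a viscosity subsolution and $\underline{v}$ a viscosity supersolution of the effective problem \eqref{subeq:effective_HJB_Von_Neumann_boundary}. Fix $\varphi\in\text{C}^{\infty}\big(\mathbb{X}\big)$ and $x_0\in\mathbb{X}$ at which $x\mapsto\big(\overline{v}-\varphi\big)\big(x\big)$ has a strict local maximum with $\overline{v}(x_0)=\varphi(x_0)$, and put $p_0:=\big(x_0,D_x\varphi(x_0),D^2_x\varphi(x_0)\big)$. Since \eqref{eq:microscale_stochastic_dynamical_system_frozen_macro_variable} is uniquely ergodic (Theorem \ref{tm:ergodicity_markov_process}), the stationary cell equation $\mathcal{H}_{p_0}\big(y,D_y\chi,D^2_y\chi\big)=\mathcal{\overline{H}}(p_0)$ on $\mathbb{T}^{d_Y}$ — which here reduces to the linear problem $\mathcal{L}^{x_0}\chi=\mathcal{\overline{H}}(p_0)-\mathfrak{H}_{p_0}$ with $\mathcal{\overline{H}}(p_0)$ the constant \eqref{eq:effective_Hamiltonian} — admits a corrector $\chi_{p_0}$: solvability with precisely that constant follows from the Fredholm alternative for the uniformly elliptic operator $\mathcal{L}^{x_0}$ on the torus (equivalently, by subtracting the linear growth in $t$ from the Cell-$t$-Problem solution \eqref{eq:cell_t_problem_solution}), and Schauder estimates give $\chi_{p_0}\in\text{C}^{2,\alpha}\big(\mathbb{T}^{d_Y}\big)$ because $\mathfrak{H}_{p_0}$ is Lipschitz in $y$. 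I would then use the perturbed test function $\varphi^{\epsilon}\big(x,y\big):=\varphi\big(x\big)+\epsilon\,\chi_{p_0}\big(y\big)$ (mollifying $\chi_{p_0}$ in $y$ if one insists on the $\text{C}^{\infty}$ requirement of Definition \ref{df:viscosity_solutions}). Since $\varphi^\epsilon\to\varphi$ uniformly and the maximum is strict, $v^{\beta,\epsilon}-\varphi^\epsilon$ attains a local maximum on $\widetilde{\mathbb{X}}$ at points $(x_\epsilon,y_\epsilon)$ with $x_\epsilon\to x_0$ and, along a subsequence, $y_\epsilon\to y_0$. Because $\varphi^\epsilon$ has block-diagonal Hessian, $\mathcal{G}_{Z}^{\epsilon,u}\big[\varphi+\epsilon\chi_{p_0}\big](x,y)=\mathcal{G}_{X}^{u}\big(x,D_x\varphi,D^2_x\varphi\,\big|\,y\big)+\mathcal{L}^{x}\chi_{p_0}(y)$, so the viscosity subsolution inequality for $v^{\beta,\epsilon}$, after taking the minimum over $u$, reads
\begin{equation*}
	\beta\varphi^\epsilon\big(x_\epsilon,y_\epsilon\big)\leqslant\mathcal{L}^{x_\epsilon}\chi_{p_0}\big(y_\epsilon\big)+\mathfrak{H}_{p_\epsilon}\big(y_\epsilon\big),\qquad p_\epsilon:=\big(x_\epsilon,D_x\varphi(x_\epsilon),D^2_x\varphi(x_\epsilon)\big).
\end{equation*}
Continuity of $\mu_{Y},\sigma_{Y}$ in the slow variable, joint continuity of $\mathfrak{H}_p$ in $(p,y)$, and the cell equation give $\mathcal{L}^{x_\epsilon}\chi_{p_0}(y_\epsilon)+\mathfrak{H}_{p_\epsilon}(y_\epsilon)\to\mathcal{\overline{H}}(p_0)$, while $\beta\varphi^\epsilon(x_\epsilon,y_\epsilon)\to\beta\varphi(x_0)$; letting $\epsilon\downarrow0$ then yields $\beta\varphi(x_0)-\mathcal{\overline{H}}\big(x_0,D_x\varphi(x_0),D^2_x\varphi(x_0)\big)\leqslant0$. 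If $x_0\in\partial\mathbb{X}$ and the $(x_\epsilon,y_\epsilon)$ lie on $\partial\widetilde{\mathbb{X}}$, then since $\widetilde{\phi}(x,y)=\phi(x)$ and $\chi_{p_0}$ depends only on $y$ one has $\big\langle D_x\varphi^\epsilon,D_x\widetilde{\phi}\big\rangle=\big\langle D_x\varphi,D_x\phi\big\rangle$, so the boundary alternative of \eqref{eq:boundary_viscosity_subsolution} passes directly to the limit; in all cases the subsolution inequality for $\overline{v}$ at $x_0$ holds. The supersolution property of $\underline{v}$ is entirely symmetric.

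Finally, I would invoke the comparison principle: by Theorem \ref{tm:effective_optimal_value_function_viscosity_solution} the effective problem \eqref{subeq:effective_HJB_Von_Neumann_boundary} satisfies the hypotheses of Theorem \ref{tm:unicity_PVC_Von_Neumann}, so applying that theorem to the subsolution $\overline{v}$ and the supersolution $\underline{v}$ gives $\overline{v}\leqslant\underline{v}$ on $\mathbb{X}$, and together with $\underline{v}\leqslant\overline{v}$ one obtains $\overline{v}=\underline{v}=:v$. This common function is continuous and both a sub- and a supersolution, hence it coincides with the unique viscosity solution $\overline{v}^{\beta}$; the coincidence of the upper and lower half-relaxed limits on the compact $\widetilde{\mathbb{X}}$ then upgrades $v^{\beta,\epsilon}\to\overline{v}^{\beta}$ to uniform convergence, which is the assertion. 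The main obstacle is the corrector construction — producing a genuinely $\text{C}^2$ solution of the cell equation \emph{with the exact constant} $\mathcal{\overline{H}}(p_0)$ — and this is exactly where hypotheses II and VI do their work: uniform ellipticity furnishes the solvability and Schauder regularity, while the smoothness and maximum-principle conditions of VI secure the Lipschitz dependence of the invariant density $\rho_{\bar{x}}$ on $\bar{x}$ (Lemma \ref{lm:differentiability_fokker_plack_parameters} and Appendix \ref{sec:appendix_A}) that makes $\mathcal{\overline{H}}$ inherit the structure required for Theorems \ref{tm:effective_optimal_value_function_viscosity_solution} and \ref{tm:unicity_PVC_Von_Neumann}. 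The residual points — localization and strictness of the perturbed maxima, the alternative behaviour near $\partial\widetilde{\mathbb{X}}$, and the $\text{C}^{\infty}$ mollification — are routine.
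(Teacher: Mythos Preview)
Your proposal is correct and follows the same overall architecture as the paper: equiboundedness (Lemma \ref{lm:multiscale_optimal_value_function_equibounded}) $\Rightarrow$ half-relaxed limits $\Rightarrow$ perturbed test function method $\Rightarrow$ comparison (Theorem \ref{tm:unicity_PVC_Von_Neumann}) $\Rightarrow$ uniform convergence. There are, however, two technical differences worth noting. First, the paper builds the inf/sup over $y$ directly into the definition of the relaxed semilimits, setting $v^{\beta}_{\text{sup}}(x_0):=\limsup_{\epsilon\to0,\,x\to x_0}\sup_{y}v^{\beta,\epsilon}(x,y)$ and symmetrically for the $\liminf$; this sidesteps your separate ``independence in $y$'' step. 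Second, and more substantively, the paper does \emph{not} use a stationary corrector: instead of solving $\mathcal{L}^{x_0}\chi=\mathcal{\overline{H}}(p_0)-\mathfrak{H}_{p_0}$, it works with the classical solution $w_p$ of the parabolic Cell-$t$-Problem \eqref{subeq:PDE_cell_t_problem} and takes
\[
\varphi^{\epsilon}(x,y)=\varphi(x)+\frac{\epsilon}{t_0}\int_{0}^{t_0}w_p(s,y)\,ds,
\]
choosing $t_0$ large so that $\big|\tfrac{1}{t_0}w_p(t_0,y)-\mathcal{\overline{H}}(p_0)\big|<\varsigma$; after substituting into the multiscale subsolution inequality one integrates the cell equation in $s$, passes to the limit $n\to\infty$, and finally lets $\varsigma\downarrow0$. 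Your elliptic-corrector route is cleaner here because uniform ellipticity of $\mathcal{L}^{x_0}$ on the torus makes the Fredholm alternative and Schauder regularity available; the paper's parabolic route trades that for an extra approximation parameter but avoids invoking solvability of the stationary cell problem. Both are standard variants of the Alvarez--Bardi perturbed test function scheme and lead to the same conclusion.
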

  To demonstrate this result, we follow more closely part of the strategy adopted in \cite{BardiCesaroni}, \cite{BardiCesaroniManca} and \cite{AlvarezBardi01}. As well as some of the strategies presented in \cite[Chapter 2]{AlvarezBardi} and \cite[Chapter 7]{FlemingSoner}. The main tool to be used is the relaxed semi-limits:
  \begin{subequations}
  	\label{subeq:relaxed_semilimits}
  	\begin{align}
  		\label{eq:lower_relaxed_semilimits}
  		v_{\text{inf}}^{\beta}\big(x_0\big)&:=\liminf_{\epsilon\to 0, x\to x_0}\inf_{y\in \mathbb{T}^{d_Y}}v^{\beta,\epsilon}\big(x,y\big),\\
  		\label{eq:upper_relaxed_semilimits}
  		v_{\text{sup}}^{\beta}\big(x_0\big)&:=\limsup_{\epsilon\to 0, x\to x_0}\sup_{y\in\mathbb{T}^{d_Y}}v^{\beta,\epsilon}\big(x,y\big).
  	\end{align}
  \end{subequations}
  These semilimits are finite thanks to Lemma \ref{lm:multiscale_optimal_value_function_equibounded}. The strategy adopted consists of demonstrating that the semilimits \eqref{eq:lower_relaxed_semilimits} and \eqref{eq:upper_relaxed_semilimits} are, respectively, viscosity super- and subsolutions of the PDE \eqref{subeq:effective_HJB_Von_Neumann_boundary}. As the inequality
  \begin{equation*}
  	v_{\text{inf}}^{\beta}\big(x_0\big)=\liminf_{\epsilon\to 0, x\to x_0}\inf_{y\in \mathbb{T}^{d_Y}}v^{\beta,\epsilon}\big(x,y\big)\leqslant \limsup_{\epsilon\to 0, x\to x_0}\sup_{y\in\mathbb{T}^{d_Y}}v^{\beta,\epsilon}\big(x,y\big)=v_{\text{sup}}^{\beta}\big(x_0\big)
  \end{equation*}
  always holds, we apply \cite[Theorem 2.12]{CalixtoCostaValle2025} (which encapsulates the comparison principle), in order to obtain the opposite inequality. In this case, the function $\widetilde{v}^{\beta}:\mathbb{X}\to\mathbb{R}$ defined by 
  \begin{equation*}
  	\widetilde{v}^{\beta}\big(x_0\big):=v_{\text{inf}}^{\beta}\big(x_0\big)=v_{\text{sup}}^{\beta}\big(x_0\big)
  \end{equation*}
  is the unique viscosity solution of the PDE \eqref{subeq:effective_HJB_Von_Neumann_boundary}. On the other hand, in Theorem \ref{tm:effective_optimal_value_function_viscosity_solution}, we prove that the effective optimal value function \eqref{eq:effective_optimum_value_function} is the unique viscosity solution of the PDE \eqref{subeq:effective_HJB_Von_Neumann_boundary}. Therefore, we obtain that $\overline{v}^{\beta}=\widetilde{v}^{\beta}$.  Finally, since the family $\big(v^{\beta,\epsilon}\big)_{\epsilon>0}$ is equibounded by Lemma \ref{lm:multiscale_optimal_value_function_equibounded}, it follows from \cite[Lemma 1.9 of Chapter 5]{BardiDolcetta}, that the family $\big(v^{\beta,\epsilon}\big)_{\epsilon>0}$ converges uniformly to $\overline{v}^{\beta}$.
  \begin{proof}
  		We prove the result for the points $x\in \partial\mathbb{X}$, because the proof for interior points is analogous to that done in the literature for unbounded domains, see, for example, \cite[Theorem 5]{BardiCesaroniManca} or \cite[Theorem 3.2]{BardiCesaroni}. We will divide the proof into several steps.
  		\begin{itemize}
  			\item[Step 1] (Multiscale test function). In the first step, we construct, from the test function of the effective case, a test function that can be used in the multiscale case. We will do this using a method known as \emph{Pertubation of the Test Function}.
  			
  			Given $x_0\in \partial\mathbb{X}$, consider a test function $\varphi\in \text{C}^{\infty}\big(\mathbb{X}\big)$ such that
  			\begin{equation*}
  				\max_{x\in \overline{B\left(x_0,\gamma_{x_0}\right)}\cap \mathbb{X}}\big(v^{\beta}_{\text{sup}}\big(x\big)-\varphi\big(x\big)\big)=v^{\beta}_{\text{sup}}\big(x_0\big)-\varphi\big(x_0\big)=0
  			\end{equation*}
  			for some $\gamma_{x_0}>0$, with the maximum being strict. We want to prove that
  			\begin{equation}
  				\label{eq:auxiliary_effective_boundary_condition}
  				\overline{F}^{\beta}\Big(x_0,\varphi\big(x_0\big),D_x\varphi\big(x_0\big),D^2_x\varphi\big(x_0\big)\Big)\wedge \overline{\Gamma}\Big(x_0,D_x\varphi\big(x_0\big)\Big)\leqslant0.
  			\end{equation}
  			To do this, set $p:=\big(x_0,D_x\varphi\big(x_0\big),D^2_x\varphi \big(x_0\big)\big)$,
  			items I, II and III imply, by  \cite[Theorem 2]{IlyinKalashnikovOleynik}, that there exists $w_p\in \text{C}^{1,2}\big(\mathbb{R}_{+}\times\mathbb{T}^{d_Y}\big)$, the unique classical solution of the PDE \eqref{subeq:PDE_cell_t_problem}.
  			Given $\varsigma>0$, there exists $t_0>0$ (by the definition of $\mathcal{\overline{H}}$), such that 
  		   \begin{equation}
  				\label{eq:auxiliary_inequality_18}
  				\Bigg|\frac{1}{t_0}w_p\big(t_0,y\big)-\mathcal{\overline{H}}\Big(x_0,D_x\varphi\big(x_0\big),D^2_x\varphi\big(x_0\big)\Big)\Bigg|<\varsigma\quad \forall y\in \mathbb{T}^{d_Y}.
  			\end{equation}
  			Given $\epsilon>0$, consider $\varphi^{\epsilon}:\mathbb{X}\times\mathbb{T}^{d_Y}\to \mathbb{R}$ defined by
  			\begin{equation}
  				\label{eq:perturbed_test_function}
  				\varphi^{\epsilon}\big(x,y\big):=\varphi\big(x\big)+ \frac{\epsilon}{t_0}\int_{0}^{t_0}w_p\big(s,y\big)\,ds
  			\end{equation}
  			a perturbation of the test function $\varphi$. We want to show that \eqref{eq:perturbed_test_function} is a multiscale test function that we can use to work with the viscosity solution $v^{\beta,\epsilon}$. To do this, we have to prove the following:
  			\begin{itemize}
  				\item There exists sequences $\epsilon_n\downarrow 0$, $x^{t_0}_n\to x_0$ and $y^{t_0}_n\to y^{t_0}$ such that the pair $\big(x^{t_0}_n, y^{t_0}_n\big)$ is a local maximum of the application $(x,y)\mapsto\big(v^{\beta,\epsilon_n}-\varphi^{\epsilon_n}\big)\big(x,y\big)$ and
  				\begin{equation*}
  					\lim_{n\to \infty}\big(v^{\beta,\epsilon_n}-\varphi^{\epsilon_n}\big)\big(x^{t_0}_n,y^{t_0}_n\big)=v^{\beta}_{\text{sup}}\big(x_0\big)-\varphi\big(x_0\big)=0.
  				\end{equation*}
  			\end{itemize}
  			First of all, note that the family $\big(\varphi^{\epsilon}\big)_{\epsilon>0}$ converges, uniformly in $\mathbb{X}\times\mathbb{T}^{d_Y}$, to $\varphi$ and thus,
  		    \begin{equation*}
  			\limsup_{\epsilon\to 0, x\to x_0}\sup_{y\in\mathbb{T}^{d_Y}}\big(v^{\beta,\epsilon}-	\varphi^{\epsilon}\big)\big(x,y\big)=v^{\beta}_{\text{sup}}\big(x_0\big)-\varphi\big(x_0\big)=0.
  			\end{equation*}
  			We claim that: there are sequences $\epsilon_n\downarrow 0$ and $x^{t_0}_{\epsilon_n}\in \overline{B\left(x_0,\gamma_{x_0}\right)}\cap \mathbb{X}$ such that: 
  			\begin{itemize}
  				\item[1] $x^{t_0}_{\epsilon_n}\to x_0$ when $n\to +\infty$,
  				\item[2] $\sup_{y\in\mathbb{T}^{d_Y}}\big(v^{\beta,\epsilon_n}-\varphi^{\epsilon_n}\big)\big(x^{t_0}_{\epsilon_n},y\big)\to v^{\beta}_{\text{sup}}\big(x_0\big)-\varphi\big(x_0\big)$ when $n\to+\infty$,
  				\item[3] $x^{t_0}_{\epsilon_n}$ is a maximum of the application $x\mapsto \sup_{y\in\mathbb{T}^{d_Y}}\big(v^{\beta,\epsilon_n}-\varphi^{\epsilon_n}\big)\big(x,y\big)$ in $\overline{B\left(x_0,\gamma_{x_0}\right)}\cap\mathbb{X}$.
  			\end{itemize}
  			In fact, consider $x^{t_0}_{\epsilon}\in \overline{B\left(x_0,\gamma_{x_0}\right)}\cap\mathbb{X}$ such that
  			\begin{equation*}
  				\max_{x\in\overline{B\left(x_0,\gamma_{x_0}\right)}\cap\mathbb{X}}\sup_{y\in\mathbb{T}^{d_Y}}\big(v^{\beta,\epsilon}-\varphi^{\epsilon}\big)\big(x,y\big)=\sup_{y\in\mathbb{T}^{d_Y}}\big(v^{\beta,\epsilon}-\varphi^{\epsilon}\big)\big(x^{t_0}_{\epsilon},y\big).
  			\end{equation*}
  			By definition of $\limsup$, there exists sequences $\epsilon_n\downarrow 0$ and $\widetilde{x}^{t_0}_{\epsilon_n}\to x_0$ such that 
  			\begin{equation*}
  				\lim_{n\to \infty}\sup_{y\in\mathbb{T}^{d_Y}}\big(v^{\beta,\epsilon_n}-\varphi^{\epsilon_n}\big)\big(\widetilde{x}^{t_0}_{\epsilon_n},y\big)=v^{\beta}_{\text{sup}}\big(x_0\big)-\varphi\big(x_0\big)=0.
  			\end{equation*}
  			On the other hand, since $\big(x^{t_0}_{\epsilon_n}\big)_{n\geqslant0}$ is a sequence in $\overline{B\left(x_0,\gamma_{x_0}\right)}\cap\mathbb{X}$, there exists a subsequence $\big(x^{t_0}_{\epsilon_{n_i}}\big)_{i\geqslant0}$ and a point $\widetilde{x}^{t_0}\in \overline{B\left(x_0, \gamma_{x_0}\right)}\cap\mathbb{X}$ such that $\lim_{i\to+\infty}x^{t_0}_{\epsilon_{n_i}}=\widetilde{x}^{t_0}$. This implies that
  			\begin{align*}
  				0=v^{\beta}_{\text{sup}}\big(x_0\big)-\varphi\big(x_0\big)&=\lim_{i\to+\infty}\sup_{y\in\mathbb{T}^{d_Y}}\big(v^{\beta,\epsilon_{n_i}}-\varphi^{\epsilon_{n_i}}\big)\big(\widetilde{x}^{t_0}_{\epsilon_{n_i}},y\big)\\
  				&\leqslant\liminf_{i\to+\infty}\sup_{y\in\mathbb{T}^{d_Y}}\big(v^{\beta,\epsilon_{n_i}}-\varphi^{\epsilon_{n_i}}\big)\big(x^{t_0}_{\epsilon_{n_i}},y\big)\\
  				&\leqslant\limsup_{i\to+\infty}\sup_{y\in\mathbb{T}^{d_Y}}\big(v^{\beta,\epsilon_{n_i}}-\varphi^{\epsilon_{n_i}}\big)\big(x^{t_0}_{\epsilon_{n_i}},y\big)=v^{\beta}_{\text{sup}}\big(\widetilde{x}^{t_0}\big)-\varphi\big(\widetilde{x}^{t_0}\big).
  			\end{align*}
  			That is,
  			\begin{equation*}
  				0=v^{\beta}_{\text{sup}}\big(x_0\big)-\varphi\big(x_0\big)\leqslant v^{\beta}_{\text{sup}}\big(\widetilde{x}^{t_0}\big)-\varphi\big(\widetilde{x}^{t_0}\big).
  			\end{equation*}
  			As $x_0$ is a strict maximum point of the application $x\mapsto \big(v^{\beta}_{\text{sup}}-\varphi\big)\big(x\big)$, we get $\widetilde{x}^{t_0}=x_0$ and therefore
  			\begin{equation*}
  				\lim_{i\to+\infty}\sup_{y\in\mathbb{T}^{d_Y}}\big(v^{\beta,\epsilon_{n_i}}-\varphi^{\epsilon_{n_i}}\big)\big(x^{t_0}_{\epsilon_{n_i}},y\big)=v^{\beta}_{\text{sup}}\big(x_0\big)-\varphi\big(x_0\big)=0.
  			\end{equation*}
  			Thus, we have established items 1, 2 and 3.
  			
  			For notational reasons, we will continue to write $\epsilon_n$ instead of $\epsilon_{n_i}$ and $x^{t_0}_n$ instead of $x^{t_0}_{\epsilon_{n_i}}$. Furthermore, taking another subsequence if necessary (and keeping the notation), we can consider that $x^{t_0}_n\in \text{Int}\big(\mathbb{X}\big)$ or $x^{t_0}_n\in \partial\mathbb{X}$ for all $n\in\mathbb{N}$.
  			
  			For each term of the sequence $\big(x^{t_0}_n\big)_{n\geqslant0}$, it follows by the compactness of the torus $\mathbb{T}^{d_Y}$ and continuity of the application $y\mapsto\big(v^{\beta, \epsilon_n}-\varphi^{\epsilon_n}\big)\big(x^{t_0}_n,y\big)$, that there exists a sequence $\big(y^{t_0}_n\big)_{n\geqslant0}$ and a point $y^{t_0}\in\mathbb{T}^{d_Y}$ such that $\lim_{n\to \infty}y^{t_0}_n=y^{t_0}$.
  			
  			\item[Step 2] (Solution of the effective problem when $(x^{t_0}_n,y^{t_0}_n)\in\text{Int}\big(\mathbb{X}\big)\times\mathbb{T}^{d_Y}$) In this step, we use the test function \eqref{eq:perturbed_test_function} and the sequences established in step 1, to prove that: when we have $(x^{t_0}_n,y^{t_0}_n)\in\text{Int}\big(\mathbb{X}\big)\times\mathbb{T}^{d_Y}$, then \eqref{eq:auxiliary_effective_boundary_condition} holds.
  			
  			By step 1, there are sequences $\epsilon_n\downarrow 0$, $x^{t_0}_n\to x_0$ and $y^{t_0}_n\to y^{t_0}$, such that, the pair $(x^{t_0}_n,y^{t_0}_n)$ is a local maximum of the application $(x,y)\mapsto\big(v^{\beta,\epsilon_n}-\varphi^{\epsilon_n}\big)\big(x,y\big)$ and
  			\begin{equation*}
  				\lim_{n\to \infty}\big(v^{\beta,\epsilon_n}-\varphi^{\epsilon_n}\big)\big(x^{t_0}_n,y^{t_0}_n\big)=v^{\beta}_{\text{sup}}\big(x_0\big)-\varphi\big(x_0\big)=0.
  			\end{equation*}
  			Since every viscosity solution is also a viscosity subsolution, we have, by Definition \ref{def:viscosity_solutions}, that
  			\begin{subequations}
  				\begin{align}
  					\label{eq:first_possibility_01}
  					F^{\beta,\epsilon_n}\Big(x^{t_0}_n,y^{t_0}_n,\varphi^{\epsilon_n},D\varphi^{\epsilon_n},D^2\varphi^{\epsilon_n}\Big)&\leqslant 0\quad
  					(x^{t_0}_n,y^{t_0}_n)\in\text{Int}\big(\mathbb{X}\big)\times\mathbb{T}^{d_Y},\\
  					\label{eq:second_possibility_01}
  					F^{\beta,\epsilon_n}\Big(x^{t_0}_n,y^{t_0}_n,\varphi^{\epsilon_n},D\varphi^{\epsilon_n},D^2\varphi^{\epsilon_n}\Big)\wedge \Gamma^{\epsilon_n}\Big(x^{t_0}_n,y^{t_0}_n,D\varphi^{\epsilon_n}\Big)&\leqslant 0\quad
  					(x^{t_0}_n,y^{t_0}_n)\in \partial\mathbb{X}\times\mathbb{T}^{d_Y},
  				\end{align}
  			\end{subequations}
  			where $F^{\beta,\epsilon_n}$ and $\Gamma^{\epsilon_n}$ appear when we rewrite the multiscale HJB equation \eqref{subeq:multiscale_HJB_Von_Neumann_boundary} in the form of \cite[Equation 29]{CalixtoCostaValle2025}.
  			
  			We deal with the case \eqref{eq:first_possibility_01}. By definition of the function $F^{\beta,\epsilon_n}$, we have that
  			\begin{equation*}
  				\beta \varphi^{\epsilon_n}-\mathcal{H}\Bigg(x^{t_0}_n,y^{t_0}_n,D_{x}\varphi^{\epsilon_n},\frac{D_{y}\varphi^{\epsilon_n}}{\epsilon_n},D^2_{x}\varphi^{\epsilon_n},\frac{D^2_{y}\varphi^{\epsilon_n}}{\epsilon_n},\frac{D^2_{x,y}\varphi^{\epsilon_n}}{\sqrt{\epsilon_n}}\Bigg)\leqslant 0.
  			\end{equation*}
  			By definition of $\mathcal{H}$, we can write the above inequality as
  			\begin{align*}
  				\beta \varphi^{\epsilon_n}&-\min_{u\in \mathbb{U}}\Biggl\{\mathcal{G}_{X}^{u}\big(x^{t_0}_n,D_{x}\varphi^{\epsilon_n},D^2_{x}\varphi^{\epsilon_n}\big|y^{t_0}_n\big)+L\big(x^{t_0}_n,y^{t_0}_n,u\big)+\text{Tr}\Big(\frac{D^2_{x,y}\varphi^{\epsilon_n}}{\sqrt{\epsilon_n}}\big[\sigma_{X}\sigma^{\top}_{Y}\big]\big(x^{t_0}_n,y^{t_0}_n,u\big)\Big)\Biggr\}\\
  				&-\mathcal{G}_{Y}\Bigg(y^{t_0}_n,\frac{D_{y}\varphi^{\epsilon_n}}{\epsilon_n},\frac{D^2_{y}\varphi^{\epsilon_n}}{\epsilon_n}\Bigg|x^{t_0}_n\Bigg)\leqslant 0.
  			\end{align*}
  			From the definition of $\varphi^{\epsilon}$ we obtain, by Leibniz's formula, that
  			\begin{equation*}
  				D_y\varphi^{\epsilon_n}\big(x,y\big)=\frac{\epsilon_n}{t_0}\int_{0}^{t_0}D_yw_p\big(s,y\big)\,ds\quad\text{and}\quad 	D^2_y\varphi^{\epsilon_n}\big(x,y\big)=\frac{\epsilon_n}{t_0}\int_{0}^{t_0}D^2_yw_p\big(s,y\big)\,ds
  			\end{equation*}
  			From the above formulas and the linearity of $\mathcal{G}_{Y}$, we have that
  			\begin{align}
  				\label{eq:auxiliary_inequality_19}
  				\beta \varphi^{\epsilon_n}&-\min_{u\in \mathbb{U}}\Bigg\{\mathcal{G}_{X}^{u}\big(x^{t_0}_n,D_{x}\varphi,D^2_{x}\varphi\big|y^{t_0}_n\big)+L\big(x^{t_0}_n,y^{t_0}_n,u\big)\Bigg\}\\
  				\nonumber
  				&-\frac{1}{t_0}\int_{0}^{t_0}\mathcal{G}_{Y}\Big(y^{t_0}_n,D_{y}w_p\big(s,y^{t_0}_n\big),D^2_{y}w_p\big(s,y^{t_0}_n\big)\bigg|x^{t_0}_n\Big)\,ds\leqslant 0.
  			\end{align}
  			Since $w_p$ is a classical solution of the Cell-$t$-Problem \eqref{subeq:PDE_cell_t_problem}, it follows that
  			\begin{equation*}
  				-\partial_t w_p\big(s,y^{t_0}_n\big)+\mathcal{G}_{Y}\Bigg(y^{t_0}_n,D_yw_p\big(s,y^{t_0}_n\big),D^2_yw_p\big(s,y^{t_0}_n\big)\bigg|x^{t_0}_n\Bigg)+\mathfrak{H}_p\big(y^{t_0}_n\big)=0.
  			\end{equation*}
  		    Integrating up to time $t_0>0$ and then dividing by that same value, we get
  		    \begin{equation}
  		       \label{eq:auxiliary_equality_20}
  		    	- \frac{1}{t_0}w_p\big(t_0,y^{t_0}_n\big)+\frac{1}{t_0}\int_{0}^{t_0}\mathcal{G}_{Y}\bigg(y^{t_0}_n,D_yw_p\big(s,y^{t_0}_n\big),D^2_yw_p\big(s,y^{t_0}_n\big)\bigg|x^{t_0}_n\bigg)ds+\mathfrak{H}_p\big(y^{t_0}_n\big)=0.
  		    \end{equation}
  		    By combining the inequality \eqref{eq:auxiliary_inequality_19} with the equality \eqref{eq:auxiliary_equality_20}, it follows that
  		    \begin{equation*}
  		    	\beta \Bigg(\varphi\big(x^{t_0}_n\big)+ \frac{\epsilon_n}{t_0}\int_{0}^{t_0}w_p\big(s,y^{t_0}_n\big)\,ds\Bigg)-\min_{u\in \mathbb{U}}\Bigg\{\mathcal{G}_{X}^{u}\big(x^{t_0}_n,D_{x}\varphi,D^2_{x}\varphi\big|y^{t_0}_n\big)+L\big(x^{t_0}_n,y^{t_0}_n,u\big)\Bigg\}+\mathfrak{H}_p\big(y^{t_0}_n\big)-\frac{1}{t_0}w_p\big(t_0,y^{t_0}_n\big)\leqslant 0.
  		    \end{equation*}
  		    By making $n\to+\infty$ it follows, from the continuity of each of the above terms, that
  		    \begin{equation*}
  		    \beta \varphi\big(x_0\big)-\frac{1}{t_0}w_p\big(t_0,y^{t_0}\big)\leqslant 0.
  		    \end{equation*}
  		    From the inequality \eqref{eq:auxiliary_inequality_18}, we have that 
  		    \begin{equation*}
  		    	\beta \varphi\big(x_0\big)-\mathcal{\overline{H}}\Big(x_0,D_x\varphi\big(x_0\big),D^2_x\varphi \big(x_0\big)\Big)\leqslant \varsigma.
  		    \end{equation*}
  		    Making $\varsigma \to 0$ we get
  		    \begin{equation*}
  		    	\beta \varphi\big(x_0\big)-\mathcal{\overline{H}}\Big(x_0,D_x\varphi\big(x_0\big),D^2_x\varphi \big(x_0\big)\Big)\leqslant 0.
  		    \end{equation*}
  		    What implies the inequality \eqref{eq:auxiliary_effective_boundary_condition}
  		    \item[Step 3](Solution of the effective problem when $(x^{t_0}_n,y^{t_0}_n)\in \partial\mathbb{X}\times\mathbb{T}^{d_Y}$) We treat the case \eqref{eq:second_possibility_01}. This case is divided into the following sub-cases:
  			\begin{subequations}
  			\begin{align}
  				\label{eq:first_possibility_02}
  				\beta \varphi^{\epsilon_n}-\mathcal{H}\Bigg(x^{t_0}_n,y^{t_0}_n,D_{x}\varphi^{\epsilon_n},\frac{D_{y}\varphi^{\epsilon_n}}{\epsilon_n},D^2_{x}\varphi^{\epsilon_n},\frac{D^2_{y}\varphi^{\epsilon_n}}{\epsilon_n},\frac{D^2_{x,y}\varphi^{\epsilon_n}}{\sqrt{\epsilon_n}}\Bigg)&\leqslant 0\\
  				\nonumber
  				\text{or}\\
  				\label{eq:second_possibility_02}
  				\bigg\langle D\varphi^{\epsilon_n}\big(x^{t_0}_n,y^{t_0}_n\big), D\widetilde{\phi}\big(x^{t_0}_n,y^{t_0}_n\big)\bigg\rangle-h\big(x^{t_0}_n\big)&\leqslant 0.
  			\end{align}
  		\end{subequations}
  		If the inequality \eqref{eq:first_possibility_02} holds, then we can repeat the same procedure as in step 2 and obtain the inequality \eqref{eq:auxiliary_effective_boundary_condition}. Now, if the inequality \eqref{eq:second_possibility_02} holds, then
  		by definitions of $\widetilde{\phi}$ ($\widetilde{\phi}\big(x,y\big):=\phi\big(x\big)$) and $\varphi^{\epsilon_n}$(equation \eqref{eq:perturbed_test_function}), it follows that
  		\begin{equation*}
  			\bigg\langle D_x\varphi\big(x^{t_0}_n\big), D_x\phi\big(x^{t_0}_n\big)\bigg\rangle-h\big(x^{t_0}_n\big)\leqslant 0.
  		\end{equation*}
  		Making $n\to +\infty$ we have, by continuity, that
  		\begin{equation*}
  			\bigg\langle D_x\varphi\big(x_0\big), D_x\phi\big(x_0\big)\bigg\rangle-h\big(x_0\big)\leqslant 0.
  		\end{equation*}
  		Thus, we obtain the inequality \eqref{eq:auxiliary_effective_boundary_condition}.
  		
  		\item[Step 4] In steps 2 and 3, we proved that $v_{\text{sup}}^{\beta}:\mathbb{X}\to\mathbb{R}$ is a viscosity subsolution of the effective HJB equation \eqref{subeq:effective_HJB_Von_Neumann_boundary}.  To demonstrate that the relaxed semilimit \eqref{eq:lower_relaxed_semilimits} is a viscosity supersolution of the effective HJB equation \eqref{subeq:effective_HJB_Von_Neumann_boundary}, we repeat the same arguments we used for the viscosity subsolution. 
  		
  		On the one hand, by definition of the relaxed semilimits \eqref{subeq:relaxed_semilimits}, we have that
  		\begin{align*}
  			v^{\beta}_{\text{inf}}\big(x_0\big)=\liminf_{\epsilon\to 0, x\to x_0}\inf_{y\in \mathbb{T}^{d_Y}}v^{\beta,\epsilon}\big(x,y\big)\leqslant\limsup_{\epsilon\to 0, x\to x_0}\sup_{y\in\mathbb{T}^{d_Y}}v^{\beta,\epsilon}\big(x,y\big)=v^{\beta}_{\text{sup}}\big(x_0\big).
  		\end{align*}
  		On the other hand, by \cite[Theorem 2.12]{CalixtoCostaValle2025} (which encapsulates the comparison principle), we obtain the opposite inequality. Therefore, $v_{\text{sup}}^{\beta}=v_{\text{inf}}^{\beta}$ in $\mathbb{X}$. Defining $\widetilde{v}^{\beta}:\mathbb{X}\to\mathbb{R}$ by
  		\begin{equation}
  			\label{eq:auxiliary_effective_optimum_value_function}
  			\widetilde{v}^{\beta}:=v_{\text{sup}}^{\beta}=v_{\text{inf}}^{\beta},
  		\end{equation}
  		we obtain that $\widetilde{v}^{\beta}$ is the unique viscosity solution of the effective HJB equation \eqref{subeq:effective_HJB_Von_Neumann_boundary}. In addition, the function \eqref{eq:auxiliary_effective_optimum_value_function} is continuous. We also know, from \cite[Theorem 2.11]{CalixtoCostaValle2025}, that the effective optimal value function is a viscosity solution of the effective HJB equation \eqref{subeq:effective_HJB_Von_Neumann_boundary}. Therefore, $\overline{v}^{\beta}=\widetilde{v}^{\beta}$ and we have, from \cite[Lemma 1.9 of Chapter 5]{BardiDolcetta}, that the family $\big(v^{\beta,\epsilon}\big)_{\epsilon>0}$ of multiscale optimal value functions converges uniformly to the effective optimal value function $\overline{v}^{\beta}$.
  		\end{itemize}
  \end{proof}
  \section{Examples}
  \label{sec:examples}
   In this section we present two examples that implement what we have developed in the previous sections. In the first, we have a control problem for which the HBJ equation is semilinear, so we can explicitly calculate the optimal control as a function of the gradient of the optimal value function. In the second, we have a control problem for which the HJB equation is totally nonlinear.
  
   In both cases, we will consider the following space for the slow variable
  \begin{equation}
  	 	\label{eq:domain_D_slow_variable}
  	\mathbb{X}:=\big[-\alpha,\alpha\big].
  \end{equation}
   for $\alpha>0$. Consider the following function $\phi:\mathbb{R}\to\mathbb{R}$ by
     \begin{equation}
   	\label{eq:example_phi_function}
   	\phi\big(x\big):=\frac{1}{2\alpha}e^{\alpha^2-x^2}\bigg(x^2-\alpha^2\bigg).
   \end{equation}
    Let's show that the set $\mathbb{X}$ has the following representation 
  \begin{equation}
  	\label{eq:domain_D_slow_variable_phi}
  	\mathbb{X}=\Bigg\{x\in\mathbb{R}:\phi\big(x\big)\leqslant 0\Bigg\}
  \end{equation}
  in terms of the function \eqref{eq:example_phi_function}. To do this, let's look at some properties of the function \eqref{eq:example_phi_function}. First of all, note that
  \begin{equation*}
  	\lim_{x\to+\infty}\phi\big(x\big)=0\quad\text{and}\quad \lim_{x\to-\infty}\phi\big(x\big)=0.
  \end{equation*}
  Note that \eqref{eq:example_phi_function} is clearly $\text{C}^2\big(\mathbb{R}\big)$ with derivative given by
  \begin{equation}
  	\label{eq:example_phi_function_derivative}
  	\frac{d\phi}{dx}\big(x\big)=\frac{x}{\alpha}e^{\alpha^2-x^2}\big(1+\alpha^2-x^2\big).
  \end{equation}
  At points $-\alpha$ and $\alpha$ we have that
  \begin{equation*}
  	\frac{d\phi}{dx}\big(\alpha\big)=1\quad \text{and}\quad \frac{d\phi}{dx}\big(-\alpha\big)=-1.
  \end{equation*}
  The critical points of $\phi$ are
  \begin{equation*}
  	\frac{d\phi}{dx}\big(0\big)=0,\quad \frac{d\phi}{dx}\big(\sqrt{1+\alpha^2}\big)=0\quad\text{and}\quad \frac{d\phi}{dx}\big(-\sqrt{1+\alpha^2}\big)=0.
  \end{equation*}
  where $0$ is the minimum point and $\sqrt{1+\alpha^2}$ and $-\sqrt{1+\alpha^2}$ are the maximum points.  Then we have that \eqref{eq:example_phi_function} is a bounded function. Furthermore, we have
  \begin{equation*}
  	\phi\big(x\big)<0\quad \forall x\in\big(-\alpha,\alpha\big),\quad \phi\big(-\alpha\big)=0\quad\text{and} \quad \phi\big(\alpha\big)=0.
  \end{equation*}
  We conclude that the set \eqref{eq:domain_D_slow_variable}  admits the representation \eqref{eq:domain_D_slow_variable_phi}.
  
  The above development is generalized to the $d_X$-dimensional case when we consider $\mathbb{X}:=\text{B}\big(0,\alpha\big)$ with $\alpha>0$ and $\phi:\mathbb{R}^{d_X}\to\mathbb{R}$ given by 
  \begin{equation*}
  	\phi\big(x\big):=\frac{1}{2\alpha}e^{\alpha^2-\|x\|^2}\big(\|x\|^2-\alpha^2\big).
  \end{equation*}
  \subsection{Control Problem with Semilinear HJB Equation}
  \label{subsec:control_problem_semilinear_hjb_equation}
  Given $0<\epsilon \ll1$, the two-scale stochastic dynamical system we will be working with is defined by:
  \begin{subequations}
  	\label{subeq:multiscale_dynamic_system_example_01}
  	\begin{align}
  		\label{eq:dynamic_system_slow_scale_exemplo_01}
  		dX^{\epsilon}_x(t)=&\Big[\theta_a X^{\epsilon}_x(t)\sin\big(2\pi Y^{\epsilon}_{1,y}(t)\big)\sin\big(2\pi Y^{\epsilon}_{2,y}(t)\big)-\theta_b u(t)\Big]\,dt+\sigma_{X}X^{\epsilon}_x(t)\,dW(t)-D_x\phi\big(X^{\epsilon}_x(t)\big)\,dl^{\epsilon}_{x,y}(t),\\
  		\label{eq:dinamica_escala_rapida_y1_exemplo_01}
  		dY^{\epsilon}_{1,y}(t)=&\frac{\theta_c}{\epsilon}X^{\epsilon}_x(t)\cos\Big(2\pi\big(Y^{\epsilon}_{1,y}(t)-Y^{\epsilon}_{2,y}(t)\big)\Big)\,dt+\frac{\sigma_{Y}}{\sqrt{\epsilon}}\,dW(t),\\
  		\label{eq:dinamica_escala_rapida_y2_exemplo_01}
  		dY^{\epsilon}_{2,y}(t)=&\frac{\theta_c}{\epsilon}X^{\epsilon}_x(t)\cos\Big(2\pi\big(Y^{\epsilon}_{1,y}(t)-Y^{\epsilon}_{2,y}(t)\big)\Big)\,dt+\frac{\sigma_{Y}}{\sqrt{\epsilon}}\,dW(t),
  	\end{align}
  \end{subequations}
  where $\theta_a,\theta_b,\theta_c\in\mathbb{R}$ are parameters, as well as $\sigma_{X}>0$ and $\sigma_{Y}>0$. The control $\big(u^{\epsilon}(t)\big)_{t\geqslant0}$ has as its state space the interval $\big[u_a,u_b\big]$ with $u_a,u_b\in\mathbb{R}$ and $u_a<u_b$. Furthermore, we consider $\mathcal{U}$ to be the set of progressively measurable processes with state space given by the interval $\big[u_a,u_b\big]$. Finally, the process $\big(l^{\epsilon}_{x,y}(t)\big)_{t\geqslant 0}$ is continuous, non-decreasing, with $l^{\epsilon}_{x,y}\big(0\big)=0$ and satisfies the condition
  \begin{equation*}
  	l^{\epsilon}_{x,y}(t)=\int_{0}^{t}\mathds{1}_{\partial \mathbb{X}}\big(X^{\epsilon}_x(t)\big)\,dl^{\epsilon}_{x,y}(s)\quad \mathbb{P}\text{-a.s}.
  \end{equation*}
  The drift and dispersion associated with the system \eqref{subeq:multiscale_dynamic_system_example_01} are given by:
  \begin{subequations}
  	\label{subeq:fields_drift_dispersion_multiscale_system_example_01}
  	\begin{align}
  		\label{eq:slow_variable_drift_example_01}
  		\mu_{X}\big(x,y_1,y_2,u\big)&:=\theta_a x\sin\big(2\pi y_1\big)\sin\big(2\pi y_2\big)-\theta_b u,\\
  		\label{eq:slow_variable_dispersion_example_01}
  		\sigma_{X}\big(x,y_1,y_2,u\big)&:=\sigma_{X}x,\\
  		\label{eq:fast_variable_drift_example_01}
  		\mu_{Y}\big(x,y_1,y_2\big)&:=\theta_c x\big(\cos\bigg(2\pi\big(y_1-y_2\big)\bigg),\cos\big(2\pi\big(y_1-y_2\big)\big)\big),\\
  		\label{eq:fast_variable_dispersion_example_01}
  		\sigma_{Y}\big(x,y_1,y_2\big)&:=\sigma_{Y}.
  	\end{align}
  \end{subequations} 
  Note that the fields \eqref{eq:slow_variable_drift_example_01} and \eqref{eq:slow_variable_dispersion_example_01} of the slow variable are bounded and Lipschitz continuous in the state variables $(x,y)$ in $\mathbb{X}\times \mathbb{R}^2\times \big[u_a,u_b\big]$ uniformly with respect to the control variable $u$. Furthermore, all fields are $\mathbb{Z}^2$-periodic in the fast variable with the fields of the slow variable admitting the following decomposition:
  \begin{align*}
  	\mu_{\text{SF}}\big(x,y_1,y_2\big):=\theta_a x\sin\big(2\pi y_1\big)\sin\big(2\pi y_2\big)\quad&\text{and}\quad\mu_{\text{SC}}\big(x,u\big):=-\theta_b u,\\
  	\sigma_{\text{SF}}\big(x,y_1,y_2\big):=\sigma_{X}x\quad&\text{and}\quad\sigma_{\text{SC}}\big(x,u\big):=0.
  \end{align*}
  Thus, the hypotheses of item I of Section \S\ref{sec:multiscale_stochastic_optimal_control} are satisfied. 
  
  We now move on to the definitions of the operational and preventive costs. We consider the following function to be the operational cost
  \begin{equation}
  	\label{eq:operational_cost_example_01}
  	L\big(x,y_1,y_2,u\big):=\big(\theta_d-u\big)^2
  \end{equation}
  with $\theta_d>0$ a parameter. This function is bounded and Lipschitz continuous in $\mathbb{X}\times \mathbb{R}^2\times \big[u_a,u_b\big]$ uniformly with respect to the control variable $u$. It is also $\mathbb{Z}^2$-periodic in the fast variable and admits the following decomposition:
  \begin{equation*}
  	L_{\text{SF}}\big(x,y_1,y_2\big):=0\quad \text{and}\quad L_{\text{SC}}\big(x,u\big):=\big(\theta_d-u\big)^2.
  \end{equation*}
  Thus, the hypotheses of item III of Section \S\ref{sec:multiscale_stochastic_optimal_control} are satisfied. For the preventive cost we simply take
  \begin{equation*}
  	h\big(-\alpha\big):=\theta_e=:h\big(\alpha\big)
  \end{equation*}
  where $\theta_e\in\mathbb{R}$ is a parameter. 
  
  Fixed $\beta>0$, we define the cost functional by
  \begin{equation}
  	\label{eq:operational_functional_example_01}
  	J_{x,y}^{\beta,\epsilon}\big(u\big):=\mathbb{E}\Bigg[\int_{0}^{+\infty}e^{-\beta s}\big(\theta_d-u(s)\big)^2\,ds+\int_{0}^{+\infty}e^{-\beta s}\theta_e\,dl^{\epsilon}_{x,y}(s)\Bigg].
  \end{equation}
  Thus, the optimal value function is given by
  \begin{equation}
  	\label{eq:optimal_value_function_example_01}
  	v^{\beta,\epsilon}\big(x,y\big):=\inf_{u\in \mathcal{U}}J_{x,y}^{\beta,\epsilon}\big(u\big)
  \end{equation}
  and by \cite[Theorem 2.7]{CalixtoCostaValle2025}, the function \eqref{eq:optimal_value_function_example_01} is continuous. Using the principle of dynamic programming, proven in \cite[Subsection 2.2]{CalixtoCostaValle2025}, we obtain the following multiscale HJB equation 
  \begin{subequations}
  	\label{subeq:multiscale_HJB_equation_boundary_condition_example_01}
  	\begin{align}
  		\label{eq:multiscale_HJB_equation_exemplo_01}
  		\beta v^{\beta,\epsilon}-\mathcal{H}\Bigg(x,y,\partial_xv^{\beta,\epsilon},\frac{D_yv^{\beta,\epsilon}}{\epsilon},\partial^2_{x^2}v^{\beta,\epsilon},\frac{D^2_yv^{\beta,\epsilon}}{\epsilon},\frac{D^2_{x,y}v^{\beta,\epsilon}}{\sqrt{\epsilon}}\Bigg)&=0\quad\forall (x,y)\in \mathbb{X}\times \mathbb{T}^2,\\
  		\label{eq:boundary_condition_example_01}
  		\partial_{x}v^{\beta,\epsilon}\big(-\alpha,y\big)=-\theta_e\quad \text{and}\quad \partial_{x}v^{\beta,\epsilon}\big(\alpha,y\big)&=\theta_e \quad\forall y\in\mathbb{T}^2.
  	\end{align}
  \end{subequations}
  where the multiscale Hamiltonian is defined by
  \begin{align*}
  	\mathcal{H}\bigg(x,y,g_x,g_y,H_x,H_y,H_{xy}\bigg):=&\frac{\sigma^2_{X}x^2}{2}H_x+\frac{\sigma^2_{Y}}{2}\text{Tr}\Bigg(H_y\begin{bmatrix}
  		1\\
  		1
  	\end{bmatrix}\begin{bmatrix}
  		1\\
  		1
  	\end{bmatrix}^{\top}\Bigg)+\sigma_{Y}\sigma_{X}x\text{Tr}\Bigg(H_{x,y}\begin{bmatrix}
  		1\\
  		1
  	\end{bmatrix}^{\top}\Bigg)+\theta_ax\sin\big(2\pi y_1\big)\sin\big(2\pi y_2\big)g_x\\
  	&+\theta_cx\cos\big(2\pi\big(y_1-y_2\big)\big)\Bigg\langle\begin{bmatrix}
  		1\\
  		1
  	\end{bmatrix},g_y\Bigg\rangle+\min_{u\in [u_a,u_b]}\bigg\{u^2-\big(2\theta_d+\theta_bg_x\big)u\bigg\}+\theta_d^2.
  \end{align*}
  To obtain an explicit formula for the Hamiltonian \eqref{eq:multiscale_HJB_equation_exemplo_01}, we must calculate the minimization 
  \begin{equation}
  	\label{eq:multiscale_control_function_example_01}
  	u_{\eta}^{*}\big(g_x\big):=\arg\min_{u\in [u_a,u_b]}\bigg\{u^2-f_{\eta}\big(g_x\big)u\bigg\}
  \end{equation}
  where $\eta:=\big(\theta_b,\theta_d,u_a,u_b\big)$ and \(f_{\eta}\big(g_x\big):=2\theta_d+\theta_bg_x\).  Furthermore, observe that if \(v^{\beta,\epsilon}:\mathbb{X}\times\mathbb{T}^{d_Y}\mapsto \mathbb{R}\) is at least Lipschitz continuous with respect to the slow variable, then by Rademacher’s Theorem \cite[Theorem 3.2]{EvansGariepy}, the derivative of \(v^{\beta,\epsilon}\) with respect to the slow variable exists almost everywhere with respect to the Lebesgue measure and is measurable at every point where it is differentiable. Thus, by means of equation \eqref{eq:multiscale_control_function_example_01}, we obtain the following Markovian control
 \begin{equation}
 	\label{eq:multiscale_markovian_control_example_01}
 	\widetilde{u}^{\epsilon}\big(x,y\big):=u_{\eta}^{*}\bigg(\partial_xv^{\beta,\epsilon}\big(x,y\big)\bigg).
 \end{equation}
 
Observe that, due to the low regularity of the optimal value function \eqref{eq:optimal_value_function_example_01} --- even in the case where it is Lipschitz continuous --- it is not possible to apply classical control tools such as, for instance, the Verification Theorem to ensure the optimality of \eqref{eq:multiscale_markovian_control_example_01}.
   
 Finally, by \cite[Theorem 2.11]{CalixtoCostaValle2025}, the optimal value function \eqref{eq:optimal_value_function_example_01} is a viscosity solution of the PDE \eqref{eq:multiscale_HJB_equation_exemplo_01}, and by \cite[Theorem 2.12]{CalixtoCostaValle2025}, it is the unique viscosity solution.
 
  We now proceed to the construction of the effective control problem. Fixing $\bar{x}\in \mathbb{X}$ and taking $\epsilon=1$, the SDE associated with the fast system is defined by
 \begin{equation}
 	\label{eq:sde_fast_system_example_01}
 	\begin{bmatrix}
 		dY_1(t)\\
 		dY_2(t)
 	\end{bmatrix}=\theta_c \bar{x}\cos\Big(2\pi\big(Y_1(t)-Y_2(t)\big)\Big)\begin{bmatrix}
 		1\\
 		1
 	\end{bmatrix}\,dt+\sigma_{Y}\begin{bmatrix}
 		1\\
 		1
 	\end{bmatrix}\,dW(t).
 \end{equation}
 The adjoint operator of the infinitesimal generator associated with  \eqref{eq:sde_fast_system_example_01} is given by
 \begin{equation*}
 	\mathcal{L}^{\bar{x},*}\rho_{\bar{x}}\big(y_1,y_2\big)=-\theta_c \bar{x}\Bigg(\frac{\partial}{\partial y_1}\bigg(\cos\big(2\pi \big(y_1-y_2\big) \big)\rho_{\bar{x}}\big(y_1,y_2\big)\bigg)+\frac{\partial}{\partial y_2}\bigg(\cos\big(2\pi \big(y_1-y_2\big) \big)\rho_{\bar{x}}\big(y_1,y_2\big)\bigg)\Bigg)+\frac{\sigma^2_{Y}}{2}\Delta\rho_{\bar{x}}\big(y_1,y_2\big).
 \end{equation*}
 Thus, the Fokker-Planck is given by
 \begin{align}
 	\label{eq:fokker_planck_PDE_example_01}
 	\frac{\sigma^2_{Y}}{2}\Delta\rho_{\bar{x}}\big(y_1,y_2\big)-\theta_c \bar{x}\Bigg(\frac{\partial}{\partial y_1}\bigg(\cos\big(2\pi \big(y_1-y_2\big) \big)\rho_{\bar{x}}\big(y_1,y_2\big)\bigg)+\frac{\partial}{\partial y_2}\bigg(\cos\big(2\pi \big(y_1-y_2\big) \big)\rho_{\bar{x}}\big(y_1,y_2\big)\bigg)\Bigg)=0.
 \end{align}
 From item II of Section \S\ref{sec:multiscale_stochastic_optimal_control}, it follows, by Theorem \ref{tm:ergodicity_markov_process}, that the PDE \eqref{eq:fokker_planck_PDE_example_01} has a unique classical solution on the torus $\mathbb{T}^2$. On the other hand, the fields \eqref{eq:fast_variable_drift_example_01} and \eqref{eq:fast_variable_dispersion_example_01} of the fast variable belong to $\text{C}^{\infty}\big(\mathbb{X}\times\mathbb{T}^2\big)$ and are therefore $\text{C}^{1,m+2}\big(\mathbb{X}\times\mathbb{T}^2\big)$ for all $m>1$. Let's check that the main hypothesis of item VI in Subsection \S\ref{subsubsec:ergodic_theory_Markov_processes} is satisfied. As the diffusion matrix is constant, the condition \eqref{eq:maximum_principle_condition} is given by
 \begin{equation*}
 	\partial_{y_1}\mu_{Y,1}\big(x,y_1,y_2\big)+\partial_{y_2}\mu_{Y,2}\big(x,y_1,y_2\big)\geqslant 0.
 \end{equation*}
 In fact, we have
 \begin{equation*}
 	\partial_{y_1}\mu_{Y,1}\big(x,y_1,y_2\big)+\partial_{y_2}\mu_{Y,2}\big(x,y_1,y_2\big)=-2\pi\theta_cx\sin\big(2\pi\big(y_1-y_2\big)\big)+2\pi\theta_cx\sin\big(2\pi \big(y_1-y_2\big)\big) =0.
 \end{equation*}
 Thus, by Lemma \ref{lm:differentiability_fokker_plack_parameters}, the solution of the Fokker-Planck \eqref{eq:fokker_planck_PDE_example_01} is differentiable with respect to the slow scale parameter. Therefore, using the density $\rho_{\bar{x}}$, the effective drift and dispersion are given by:
 \begin{align*}
 	\mu_{\text{SC}}\big(\bar{x},u\big)&=-\theta_b u \quad \text{and}\quad\overline{\mu}_{\overline{X}}\big(\bar{x}\big)=\theta_a\kappa\big(\bar{x}\big)\bar{x},\\
 	\sigma_{\text{SC}}\big(\bar{x},u\big)&=0\qquad\hspace{0.31cm}\text{and}\quad\overline{\sigma}_{\overline{X}}\big(\bar{x}\big)=\sigma_{X}\bar{x}
 \end{align*}
 where $\kappa:\mathbb{X}\to \mathbb{R}$ is a function defined by 
 \begin{equation*}
 	\kappa\big(\bar{x}\big):=\int_{\mathbb{T}^2}\sin\big(2\pi y_1\big)\sin\big(2\pi y_2\big)\rho_{\bar{x}}\big(y_1,y_2\big)dy_1dy_2.
 \end{equation*}
 An important property of the $\kappa$ function is that it is Lipschitz continuous in function of the arguments presented at the beginning of Subsection \S\ref{subsubsec:ergodic_theory_Markov_processes}. The effective SDE is given by 
 \begin{equation}
 	\label{eq:effective_sde_example_01}
 	d\overline{X}_x(t)=\Big[\theta_a\kappa\big(\overline{X}_x(t)\big) \overline{X}_x(t)-\theta_bu(t)\Big]\,dt+\sigma_{X}\overline{X}_x(t)\,dW(t)-D_x\phi\big(\overline{X}_x(t)\big)\,d\overline{l}_x(t).
 \end{equation}
 The operational and preventive cost functions remain the same. In this case, the effective optimum value function takes the following form
 \begin{equation}
 	\label{eq:effective_optimal_value_function_example_01}
 	\overline{v}^{\beta}\big(x\big):=\inf_{u\in\mathcal{U}}\mathbb{E}\Bigg[\int_{0}^{+\infty}e^{-\beta s}\big(\theta_d-u(s)\big)^2\,ds+\int_{0}^{+\infty}e^{-\beta s}\theta_e\,d\overline{l}_x(s)\Bigg].
 \end{equation}
 The effective HJB equation associated with the \eqref{eq:effective_sde_example_01}-\eqref{eq:effective_optimal_value_function_example_01} problem is
 \begin{equation}
 	\label{eq:effective_HJB_equation_example_01}
 	\beta \overline{v}^{\beta}\big(x\big)-\frac{\sigma^2_{X}x^2}{2}\partial^2_x\overline{v}^{\beta}\big(x\big)-\theta_a \kappa\big(x\big) x\partial_x\overline{v}^{\beta}\big(x\big)-\min_{u\in [u_a,u_b]}\bigg\{u^2-\big(2\theta_d+\theta_b\partial_x\overline{v}^{\beta}\big(x\big)\big)u\bigg\}-\theta^2_d=0
 \end{equation}
 with the Neumann boundary
 \begin{equation*}
 	\partial_x\overline{v}^{\beta}\big(-\alpha\big)=-\theta_e\quad\text{and}\quad  \partial_x\overline{v}^{\beta}\big(\alpha\big)=\theta_e.
 \end{equation*}
 Once again, by \cite[Theorem 2.11]{CalixtoCostaValle2025}, we obtain that the optimal value function \eqref{eq:effective_optimal_value_function_example_01} is a viscosity solution of the PDE \eqref{eq:effective_HJB_equation_example_01} and, by \cite[Theorem 2.12]{CalixtoCostaValle2025}, it is the unique viscosity solution. 
 
As in the multiscale case, to obtain an explicit formula for the PDE \eqref{eq:effective_HJB_equation_example_01}, we compute the minimization
\begin{equation*}
	\overline{u}_{\eta}^{*}\big(g_x\big):=\arg\min_{u\in [u_a,u_b]}\bigg\{u^2 - f_{\eta}\big(g_x\big)u\bigg\}.
\end{equation*}
Thus, by the same observation made in the multiscale case, we obtain the Markovian control
\begin{equation*}
	\widetilde{u}\big(x\big):= \overline{u}_{\eta}^{*}\bigg(\partial_x \overline{v}^{\beta}\big(x\big)\bigg)
\end{equation*}
in the case of the value function \eqref{eq:effective_optimal_value_function_example_01} be Lipschitz continuous.

 \subsection{Control Problem with Fully Nonlinear HJB Equation}
 \label{subsec:control_problem_fully_nonlinear_HJB_equation}
 Given $0<\epsilon \ll1$, the two-scale stochastic dynamical system we will be working with is defined by:
 \begin{subequations}
 	\label{subeq:multiscale_dynamic_system_example_02}
 	\begin{align}
 		\label{eq:dynamic_system_slow_scale_exemplo_02}
 		dX^{\epsilon}_x(t)=&\Big[\theta_a X^{\epsilon}_x(t)\sin\big(2\pi Y^{\epsilon}_{1,y}(t)\big)\sin\big(2\pi Y^{\epsilon}_{2,y}(t)\big)-\theta_b u(t)\Big]\,dt+\sigma_{X} \sqrt{u(t)}X^{\epsilon}_x(t)\,dW(t)-D_x\phi\big(X^{\epsilon}_x(t)\big)\,dl^{\epsilon}_{x,y}(t),\\
 		\label{eq:dinamica_escala_rapida_y1_exemplo_02}
 		dY^{\epsilon}_{1,y}(t)=&\frac{\theta_c}{\epsilon}X^{\epsilon}_x(t)\cos\Big(2\pi\big(Y^{\epsilon}_{1,y}(t)-Y^{\epsilon}_{2,y}(t)\big)\Big)\,dt+\frac{\sigma_{Y}}{\sqrt{\epsilon}}dW(t),\\
 		\label{eq:dinamica_escala_rapida_y2_exemplo_02}
 		dY^{\epsilon}_{2,y}(t)=&\frac{\theta_c}{\epsilon}X^{\epsilon}_x(t)\cos\Big(2\pi\big(Y^{\epsilon}_{1,y}(t)-Y^{\epsilon}_{2,y}(t)\big)\Big)\,dt+\frac{\sigma_{Y}}{\sqrt{\epsilon}}\,dW(t),
 	\end{align}
 \end{subequations}
 The difference between the dynamical system \eqref{subeq:multiscale_dynamic_system_example_01} shown previously and \eqref{subeq:multiscale_dynamic_system_example_02} is that we include the control process in the diffusion term of the slow equation. In this case, the decomposition of the drift and dispersion fields in the slow equation is given by
 \begin{align*}
	\mu_{\text{SF}}\big(x,y_1,y_2\big):=\theta_a x\sin\big(2\pi y_1\big)\sin\big(2\pi y_2\big)\quad&\text{and}\quad\mu_{\text{SC}}\big(x,u\big):=-\theta_b u,\\
	\sigma_{\text{SF}}\big(x,y_1,y_2\big):=0\quad&\text{and}\quad\sigma_{\text{SC}}\big(x,u\big):=\sqrt{u}x.
\end{align*}

Using the same operating cost as in Example \ref{subsec:control_problem_semilinear_hjb_equation} (equation \eqref{eq:operational_cost_example_01}) as well as the same preventive boundary cost, we obtain the following HJB equation
\begin{subequations}
	\label{subeq:multiscale_HJB_equation_boundary_condition_example_02}
	\begin{align}
		\label{eq:multiscale_HJB_equation_exemplo_02}
		\beta v^{\beta,\epsilon}-\mathcal{H}\Bigg(x,y,\partial_xv^{\beta,\epsilon},\frac{D_yv^{\beta,\epsilon}}{\epsilon},\partial^2_{x^2}v^{\beta,\epsilon},\frac{D^2_yv^{\beta,\epsilon}}{\epsilon},\frac{D^2_{x,y}v^{\beta,\epsilon}}{\sqrt{\epsilon}}\Bigg)&=0\quad\forall (x,y)\in \mathbb{X}\times \mathbb{T}^2,\\
		\label{eq:boundary_condition_example_02}
		\partial_{x}v^{\beta,\epsilon}\big(-\alpha,y\big)=-\theta_e\quad \text{and}\quad \partial_{x}v^{\beta,\epsilon}\big(\alpha,y\big)&=\theta_e \quad\forall y\in\mathbb{T}^2.
	\end{align}
\end{subequations}
where the multiscale Hamiltonian is defined by
\begin{align*}
	\mathcal{H}\bigg(x,y,g_x,g_y,H_x,H_y,H_{xy}\bigg):=&\frac{\sigma^2_{Y}}{2}\text{Tr}\Bigg(H_y\begin{bmatrix}
		1\\
		1
	\end{bmatrix}\begin{bmatrix}
		1\\
		1
	\end{bmatrix}^{\top}\Bigg)+\sigma_{X}\sigma_{Y}x\text{Tr}\Bigg(H_{x,y}\begin{bmatrix}
		1\\
		1
	\end{bmatrix}^{\top}\Bigg)+\theta_ax\sin\big(2\pi y_1\big)\sin\big(2\pi y_2\big)g_x\\
	&+\theta_cx\cos\bigg(2\pi\big(y_1-y_2\big)\bigg)\Bigg\langle\begin{bmatrix}
		1\\
		1
	\end{bmatrix},g_y\Bigg\rangle+\min_{u\in [u_a,u_b]}\Bigg\{u^2-\Bigg(2\theta_d+\theta_bg_x-\frac{1}{2}\sigma^2_{X}x^2H_x\Bigg)u\Bigg\}+\theta_d^2.
\end{align*}
To obtain an explicit formula for the Hamiltonian \eqref{eq:multiscale_HJB_equation_exemplo_02}, we must calculate the minimization 
\begin{equation}
	\label{eq:multiscale_control_function_example_02}
	u_{\eta}^{*}\big(x,g_x,H_x\big):=\arg\min_{u\in [u_a,u_b]}\bigg\{u^2- \widetilde{f}_{\eta}\big(x,g_x,H_x\big)u\bigg\}
\end{equation}
where $\eta:=\big(\theta_b,\theta_d,u_a,u_b\big)$ and
\[
\widetilde{f}_{\eta}\big(x,g_x,H_x\big):=f_{\eta}\big(g_x\big)-\frac{1}{2}\sigma^2_{X}x^2H_x.
\]
 Furthermore, when \(x = 0\), the expression in~\eqref{eq:multiscale_control_function_example_02} reduces to that of equation~\eqref{eq:multiscale_control_function_example_01}. 

Unfortunately, the analysis carried out in the semilinear case is no longer applicable, even when the optimal value function is Lipschitz continuous, since it is now also necessary to take the Hessian matrix into account.

  The analysis made on the dynamic system of the fast scale to construct the effective functions is the same as that made for Example \ref{subsec:control_problem_semilinear_hjb_equation} and so  the effective drift and dispersion are given by:
 \begin{align*}
 	\mu_{\text{SC}}\big(\bar{x},u\big)&=-\theta_b u \quad \text{and}\quad\overline{\mu}_{\overline{X}}\big(\bar{x}\big)=\theta_a\kappa\big(\bar{x}\big)\bar{x},\\
 	\sigma_{\text{SC}}\big(\bar{x},u\big)&=\sigma_{X}\sqrt{u}\bar{x}\qquad\hspace{0.31cm}\text{and}\quad\overline{\sigma}_{\overline{X}}\big(\bar{x}\big)=0
 \end{align*}
 where $\kappa:\mathbb{X}\to \mathbb{R}$ is a function defined by 
 \begin{equation*}
 	\kappa\big(\bar{x}\big):=\int_{\mathbb{T}^2}\sin\big(2\pi y_1\big)\sin\big(2\pi y_2\big)\rho_{\bar{x}}\big(y_1,y_2\big)dy_1dy_2.
 \end{equation*}
 An important property of the $\kappa$ function is that it is Lipschitz continuous in function of the arguments presented at the beginning of Subsection \S\ref{subsubsec:ergodic_theory_Markov_processes}. The effective SDE is given by 
 \begin{equation}
 	\label{eq:effective_sde_example_02}
 	d\overline{X}_x(t)=\Big[\theta_a\kappa\big(\overline{X}_x(t)\big) \overline{X}_x(t)-\theta_bu(t)\Big]\,dt+\sigma_{X}\sqrt{u(t)}\overline{X}_x(t)\,dW(t)-D_x\phi\big(\overline{X}_x(t)\big)\,d\overline{l}_x(t).
 \end{equation}
 The operational and preventive cost functions remain the same. In this case, the effective optimum value function takes the following form
 \begin{equation}
 	\label{eq:effective_optimal_value_function_example_02}
 	\overline{v}^{\beta}\big(x\big):=\inf_{u\in\mathcal{U}}\mathbb{E}\Bigg[\int_{0}^{+\infty}e^{-\beta s}\big(\theta_d-u(s)\big)^2\,ds+\int_{0}^{+\infty}e^{-\beta s}\theta_e\,d\overline{l}_x(s)\Bigg].
 \end{equation}
 The effective HJB equation associated with the \eqref{eq:effective_sde_example_02}-\eqref{eq:effective_optimal_value_function_example_02} problem is
 \begin{equation}
 	\label{eq:effective_HJB_equation_example_02}
 	\beta \overline{v}^{\beta}\big(x\big)-\min_{u\in [u_a,u_b]}\Bigg\{\frac{\sigma^2_{X}}{2}ux^2\partial^2_x\overline{v}^{\beta}\big(x\big)+\big(\theta_a \kappa\big(x\big) x-\theta_b u\big)\partial_x\overline{v}^{\beta}\big(x\big)+\big(u-\theta_d\big)^2\Bigg\}=0
 \end{equation}
 with the Neumann boundary
 \begin{equation*}
 	\partial_x\overline{v}^{\beta}\big(-\alpha\big)=-\theta_e\quad\text{and}\quad  \partial_x\overline{v}^{\beta}\big(\alpha\big)=\theta_e.
 \end{equation*}
 Once again, by \cite[Theorem 2.11]{CalixtoCostaValle2025}, we obtain that the optimal value function \eqref{eq:effective_optimal_value_function_example_02} is a viscosity solution of the PDE \eqref{eq:effective_HJB_equation_example_02} and, by \cite[Theorem 2.12]{CalixtoCostaValle2025}, it is the unique viscosity solution. 
 
 As in the multiscale case, we obtain 
 \begin{equation*}
 	\overline{u}_{\eta}^{*}\big(x, g_x, H_x\big):=\arg\min_{u\in [u_a,u_b]}\bigg\{u^2- \widetilde{f}_{\eta}\big(x,g_x,H_x\big)u\bigg\}.
 \end{equation*}
 \appendix
 \section{Regularity of the Fokker-Planck Equation in Relation to Parameters}
   \label{sec:appendix_A}
    In this appendix we will prove that the solutions of the Fokker-Planck \eqref{eq:fokker_planck_PDE} are differentiable with respect to the slow variable. To do this, we need a series of definitions and results in the field of linear second-order elliptic PDEs. We will start with the concept of the \emph{Weak Derivative}.
 \begin{definition}[Weak Derivative]
 	\label{def:generalized_weak_derivative}
 	Given an open set $Q\subset\mathbb{R}^d$, a multi-index $\alpha\in \mathbb{N}^d/\{0\}$ and $p\in [1,+\infty)$, we say that a function $u\in \emph{L}_{\emph{Loc}}^1\big(Q\big)$ admits an \emph{Weak Derivative} of order $\alpha$ in $\emph{L}^{\emph{p}}\big(Q\big)$ if there exists a function $D^{\alpha}u\in \emph{L}^{\emph{p}}\big(Q\big)$ such that  
 	\begin{equation}
 		\label{eq:generalized_weak_derivative_order_alpha}
 		\int_{Q}u\big(x\big) D^{\alpha}\phi\big(x\big)dx=\big(-1\big)^{|\alpha|}\int_{Q}D^{\alpha}u\big(x\big)\phi\big(x\big)dx\quad \forall \phi\in \emph{C}_0^{\infty}\big(Q\big)
 	\end{equation}
 	where $|\alpha|:=\sum_{i=1}^{d}\alpha_i$ and $D^{\alpha}$ is the derivative in classical multi-index notation.
 \end{definition}
 \begin{definition}[Sobolev Space]
 	\label{def:Sobolev_Space}
 	Given $p\in[1,+\infty]$, $k\in \mathbb{N}$ and an open set $Q\subset \mathbb{R}^d$, we define the \emph{Sobolev Space} $\emph{W}^{\emph{p,k}}\big(Q\big)\subset \emph{L}^{\emph{p}}\big(Q\big)$ as 
 	\begin{equation}
 		\label{eq:Sobolev_Space}
 		\emph{W}^{\emph{p,k}}\big(Q\big):=\Bigg\{u\in \emph{L}^{\emph{p}}\big(Q\big)\hspace{0.10cm}\bigg|\hspace{0.10cm}\exists \hspace{0.10cm}D^{\alpha}u \in \emph{L}^{\emph{p}}\big(Q\big) \hspace{0.15cm} \forall 0\leqslant |\alpha|\leqslant k\Bigg\}.
 	\end{equation}
 \end{definition} 
 \begin{definition}[Sobolev Norm]
 	\label{def:norm_sobolev}
 	Given $p\in[1,+\infty]$, $k\in \mathbb{N}$ and an open set $Q\subset \mathbb{R}^d$, we define over the space \eqref{eq:Sobolev_Space} the following norm
 	\begin{equation}
 		\label{eq:norm_sobolev}
 		\|u\|_{\emph{W}^{\emph{p,k}}\big(Q\big)}:=\Bigg(\sum_{|\alpha|\leqslant k}\|D^{\alpha}u\|^{\emph{p}}_{\emph{L}^{\emph{p}}\big(Q\big)}\Bigg)^{\frac{1}{p}}.
 	\end{equation}
 \end{definition}
  When $p=2$, we use the notation $\text{H}^{\text{k}}\big(Q\big):=\text{W}^{\text{2,k}}\big(Q\big)$ to indicate that in this case $\text{W}^{\text{2,k}}\big(Q\big)$ is a Hilbert Space and we write the norm \eqref{eq:norm_sobolev} as $\|\cdot\|_{\text{H}^{\text{k}}\big(Q\big)}:=\|\cdot\|_{\text{W}^{\text{2,k}}\big(Q\big)}$. Furthermore, from the classical results of Sobolev Spaces, recall that
 \begin{equation*}
 	\text{W}^{\text{p,k}}\big(Q\big)=\overline{\text{C}^{\infty}\big(Q\big)}^{\|\cdot\|_{\text{W}^{\text{p,k}}\big(Q\big)}}.
 \end{equation*}
 \begin{definition}[Space of Hölder Continuous Functions] 
 	\label{df:holder_functions_space}
 	Let $Q\subset\mathbb{R}^d$ be an open set and $\gamma\in (0,1]$. The \emph{Space of Hölder Continuous Functions} $\emph{C}_{\emph{hol}}^{0,\gamma}\big(\overline{Q}\big)$ is the set of functions $u\in \emph{C}\big(\overline{Q}\big)$ such that 
 	\begin{equation*}
 		\big[u\big]_{\emph{C}_{\emph{hol}}^{0,\gamma}(\overline{Q})}:=\sup_{x\ne y\in \overline{Q}}\frac{\big|u\big(x\big)-u\big(y\big)\big|}{\|x-y\|^{\gamma}}<\infty.
 	\end{equation*}
 	In addition, the \emph{Hölder Norm} is defined by 
 	\begin{equation*}
 		\|u\|_{\emph{C}_{\emph{hol}}^{0,\gamma}\big(\overline{Q}\big)}:=\|u\|_{\emph{C}\big(\overline{Q}\big)}+\big[u\big]_{\emph{C}_{\emph{hol}}^{0,\gamma}\big(\overline{Q}\big)}.
 	\end{equation*}
 	where 
 	\begin{equation*}
 		\|u\|_{\emph{C}\big(\overline{Q}\big)}:=\sup_{x\in \overline{Q}}\big|u\big(x\big)\big|.
 	\end{equation*}
 \end{definition}
 \begin{definition}[Norms of the Spaces $\text{C}^k\big(\overline{Q}\big)$ and $\text{C}_{\text{hol}}^{k,\gamma}\big(\overline{Q}\big)$]
 	Given an open set $Q\subset\mathbb{R}^d$ and a multi-index $\alpha\in \mathbb{N}^d/\{0\}$, we define the \emph{Norm} of the space $\emph{C}^k\big(\overline{Q}\big)$ by
 	\begin{equation*}
 		\|u\|_{\emph{C}^k\big(\overline{Q}\big)}:= \|u\|_{\emph{C}\big(\overline{Q}\big)}+\sum_{|\alpha|=1}^{k}\|D^{\alpha}u\|_{\emph{C}\big(\overline{Q}\big)}.
 	\end{equation*}
 	In addition, the \emph{Norm} of space $\emph{C}_{\emph{hol}}^{k,\gamma}\big(\overline{Q}\big)$ with $\gamma\in(0,1]$ of the functions $u\in \emph{C}^k\big(\overline{Q}\big)$ such that $\big[D^ku\big]_{\emph{C}_{\emph{hol}}^{0,\gamma}\big(\overline{Q}\big)}<\infty$ is defined by
 	\begin{equation*}
 		\|u\|_{\emph{C}_{\emph{hol}}^{k,\gamma}\big(\overline{Q}\big)}:=\|u\|_{\emph{C}^k\big(\overline{Q}\big)}+\big[D^ku\big]_{\text{C}_{\emph{hol}}^{0,\gamma}\big(\overline{Q}\big)}.
 	\end{equation*}
 \end{definition}
 \begin{theorem}[\protect{\cite[Theorem 10.4.10]{Krylov}}]
 	\label{tm:rellich_kondrachov}
 	Let $Q\subset \mathbb{R}^d$ be a domain with boundary of class $\emph{C}^k$ with $k\in \mathbb{N}$. Assume that
 	\begin{equation*}
 		p>1,\quad k-\frac{d}{p}>0\quad\text{and}\quad k-\frac{d}{p} \text{ is non-integer}.
 	\end{equation*}
 	Then
 	\begin{equation*}
 		\emph{W}^{\emph{p,k}}\big(Q\big)\subset \emph{C}_{\emph{hol}}^{0,k-\frac{d}{p}}\big(Q\big)
 	\end{equation*}
 	in the sense that any $u\in \emph{W}^{\emph{p,k}}\big(Q\big)$ has a continuous modification which we will denote again by $u$ and such that $u\in \emph{C}_{\emph{hol}}^{0,k-\frac{d}{p}}\big(\overline{Q}\big)$ and
 	\begin{equation*}
 		\|u\|_{\emph{C}_{\emph{hol}}^{0,k-\frac{d}{p}}\big(\overline{Q}\big)}\leqslant C \|u\|_{\emph{W}^{\emph{p,k}}\big(Q\big)}
 	\end{equation*}
 	with $C>0$ a constant independent of $u$.
 \end{theorem}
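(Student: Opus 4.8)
The plan is to prove this classical Sobolev--Morrey embedding in three movements: reduce to the whole space, iterate the subcritical Sobolev inequality to gain integrability, and apply Morrey's potential inequality once at the end to convert high integrability of the top-order derivatives into Hölder continuity.

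First I would reduce to $Q=\mathbb{R}^d$. Because $\partial Q$ is of class $\text{C}^k$, the domain is a $\text{W}^{\text{p,k}}$-extension domain, so there is a bounded linear operator $E\colon\text{W}^{\text{p,k}}\big(Q\big)\to\text{W}^{\text{p,k}}\big(\mathbb{R}^d\big)$ with $Eu|_Q=u$ and $\|Eu\|_{\text{W}^{\text{p,k}}(\mathbb{R}^d)}\leqslant C\|u\|_{\text{W}^{\text{p,k}}(Q)}$, the constant depending only on $d$, $k$, $p$ and on the $\text{C}^k$ atlas of $\partial Q$ (Stein's extension operator). Since the restriction map $\text{C}_{\text{hol}}^{0,\gamma}\big(\mathbb{R}^d\big)\to\text{C}_{\text{hol}}^{0,\gamma}\big(\overline{Q}\big)$ is bounded, it suffices to establish the embedding and the estimate on $\mathbb{R}^d$; there I would work with $u\in\text{C}^{\infty}\big(\mathbb{R}^d\big)$ (dense in $\text{W}^{\text{p,k}}$) and let the final inequality pass to the limit, which simultaneously furnishes the continuous modification of $u$.

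Second, I would boost integrability, treating first the case $0<k-d/p<1$ (the general case reduces to it by passing beforehand to derivatives of order $\lfloor k-d/p\rfloor$, which lie in a Sobolev space with the same fractional deficit $\{k-d/p\}$). Apply the first-order Gagliardo--Nirenberg--Sobolev inequality $\|v\|_{\text{L}^{q^{*}}(\mathbb{R}^d)}\leqslant C\|Dv\|_{\text{L}^{q}(\mathbb{R}^d)}$, $1/q^{*}=1/q-1/d$, to every derivative of $u$ of order $\leqslant k-1$; this gives $\text{W}^{\text{p,k}}\hookrightarrow W^{p_1,k-1}$ with $1/p_1=1/p-1/d$, and iterating, $\text{W}^{\text{p,k}}\hookrightarrow W^{p_j,k-j}$ with $1/p_j=1/p-j/d$, each step being subcritical as long as $p_{j-1}<d$. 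The hypothesis $k-d/p>0$ is exactly what forces $1/p_{k-1}=1/p-(k-1)/d<1/d$, i.e.\ $p_{k-1}>d$, so after $k-1$ steps one lands in $W^{p_{k-1},1}$ with super-$d$ integrability; the non-integrality of $k-d/p$ is what makes the target Hölder index meaningful and, equivalently, keeps the ladder from terminating at a critical exponent ($d/p\in\mathbb{Z}$), where the Hölder endpoint fails.

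Finally I would invoke Morrey's inequality: for $q>d$ and $v\in\text{C}^1\big(\mathbb{R}^d\big)$, starting from the ray-wise fundamental theorem of calculus averaged over a sphere,
\[
\frac{1}{|B(x,r)|}\int_{B(x,r)}\big|v(z)-v(x)\big|\,dz\leqslant C\int_{B(x,r)}\frac{|Dv(z)|}{|z-x|^{d-1}}\,dz,
\]
Hölder's inequality (the kernel $|z-x|^{-(d-1)}$ being locally in $\text{L}^{q'}$ precisely because $q>d$) gives $|v(x)-v(y)|\leqslant C|x-y|^{1-d/q}\|Dv\|_{\text{L}^{q}(\mathbb{R}^d)}$. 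Applying this with $q=p_{k-1}$ and $v$ running over the order-$(k-1)$ derivatives of $u$, and noting $1-d/p_{k-1}=1-d\big(1/p-(k-1)/d\big)=k-d/p$, yields $u\in\text{C}_{\text{hol}}^{0,k-d/p}\big(\mathbb{R}^d\big)$ with the norm bound; composing with the extension estimate gives the claimed inequality on $Q$. I expect the only genuinely delicate point to be the bookkeeping of the chain of exponents and the precise role of the non-integrality assumption; the extension operator and Morrey's inequality are classical and can simply be quoted (as the paper does via \cite{Krylov}).
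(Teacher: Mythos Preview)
The paper does not prove this theorem at all; it is quoted from Krylov's book and used as a black box in Appendix~\ref{sec:appendix_A}. So there is nothing to compare against, and your sketch is the standard route to the Sobolev--Morrey embedding (Stein extension to $\mathbb{R}^d$, iterated Gagliardo--Nirenberg--Sobolev to boost integrability, then Morrey's inequality). The argument is essentially correct.

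There is one slip in the closing step. After iterating $k-1$ times in the case $0<k-d/p<1$ you have $u\in W^{p_{k-1},1}(\mathbb{R}^d)$, meaning $u$ and $Du$ lie in $L^{p_{k-1}}$. Morrey's inequality must then be applied to $v=u$ itself, not to the order-$(k-1)$ derivatives of $u$: the gradients of those derivatives are the order-$k$ derivatives, which are only in $L^{p}$, not $L^{p_{k-1}}$, so your estimate would not close. With $v=u$ and $Dv\in L^{p_{k-1}}$ your exponent calculation $1-d/p_{k-1}=k-d/p$ is correct and the conclusion follows. In the general case $k-d/p>1$, the reduction you describe (pass to derivatives of order $m:=\lfloor k-d/p\rfloor$) is right, but then one iterates only $k-m-1$ times before applying Morrey to $v=D^{m}u$, landing in $W^{p_{k-m-1},m+1}$ with $1-d/p_{k-m-1}=k-d/p-m\in(0,1)$; the ``$k-1$ iterations then Morrey on $D^{k-1}u$'' scheme does not match either regime.
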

 
  When $k=1$ the Theorem \ref{tm:rellich_kondrachov} is known in the literature as \emph{Rellich-Kondrachov}. 
 \begin{theorem}[\protect{\cite[Theorem 7.19]{Salsa}}]
 	\label{tm:embedded_H_m_C_k}
 	Let $Q\subset \mathbb{R}^d$ be a bounded domain with Lipschitz boundary and $\widetilde{m}>d/2$. Then 
 	\begin{equation*}
 		\emph{H}^{\widetilde{m}}\big(Q\big)\hookrightarrow \emph{C}^k\big(\overline{Q}\big)\quad 0\leqslant k<\widetilde{m}-\frac{d}{2}
 	\end{equation*}
 	with compact embedding. Moreover
 	\begin{equation}
 		\label{eq:embedding_H_m_C_k}
 		\|u\|_{\emph{C}^k\big(\overline{Q}\big)}\leqslant \widetilde{\Lambda}_0\big(d,k,Q\big)\|u\|_{\emph{H}^{\widetilde{m}}\big(Q\big)}\quad 0\leqslant k<\widetilde{m}-\frac{d}{2}.
 	\end{equation}
 \end{theorem}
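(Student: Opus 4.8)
The plan is to reduce the statement to the whole-space case by a Sobolev extension, prove the embedding there by a Fourier-transform estimate, and obtain the compactness by factoring the embedding through a Hölder space and invoking Arzelà--Ascoli. First, since $Q$ is a bounded domain with Lipschitz boundary, I would invoke Stein's extension theorem to obtain a bounded linear operator $E\colon \mathrm{H}^{\widetilde m}(Q)\to\mathrm{H}^{\widetilde m}(\mathbb{R}^d)$ with $(Eu)|_Q=u$ and $\|Eu\|_{\mathrm{H}^{\widetilde m}(\mathbb{R}^d)}\le C_Q\|u\|_{\mathrm{H}^{\widetilde m}(Q)}$. It then suffices to show that every $v\in\mathrm{H}^{\widetilde m}(\mathbb{R}^d)$ with $\widetilde m>k+d/2$ has a $\mathrm{C}^k$ representative with $\|v\|_{\mathrm{C}^k(\mathbb{R}^d)}\le C\|v\|_{\mathrm{H}^{\widetilde m}(\mathbb{R}^d)}$; restricting $v=Eu$ to $\overline Q$ then yields \eqref{eq:embedding_H_m_C_k}.

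For the continuous embedding on $\mathbb{R}^d$, note that for any multi-index $\alpha$ with $|\alpha|\le k$ one has $\widehat{D^\alpha v}(\xi)=(i\xi)^\alpha\widehat v(\xi)$, so by Cauchy--Schwarz
\begin{equation*}
\int_{\mathbb{R}^d}\big|\widehat{D^\alpha v}(\xi)\big|\,d\xi\le\Bigg(\int_{\mathbb{R}^d}\frac{|\xi|^{2|\alpha|}}{(1+|\xi|^2)^{\widetilde m}}\,d\xi\Bigg)^{1/2}\|v\|_{\mathrm{H}^{\widetilde m}(\mathbb{R}^d)} .
\end{equation*}
The first factor is finite because $2\widetilde m-2|\alpha|\ge 2\widetilde m-2k>d$, the last inequality being exactly the hypothesis $k<\widetilde m-d/2$. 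Hence $\widehat{D^\alpha v}\in\mathrm{L}^1(\mathbb{R}^d)$, and Fourier inversion together with the Riemann--Lebesgue lemma shows that $D^\alpha v$ has a continuous representative with $\|D^\alpha v\|_{\mathrm{C}(\mathbb{R}^d)}\le C\|\widehat{D^\alpha v}\|_{\mathrm{L}^1}\le C\|v\|_{\mathrm{H}^{\widetilde m}(\mathbb{R}^d)}$. Summing over $0\le|\alpha|\le k$ gives $v\in\mathrm{C}^k$ and the stated norm bound.

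For compactness, I would choose $\gamma\in(0,1)$ with $k+\gamma<\widetilde m-d/2$; this is possible precisely because the inequality $k<\widetilde m-d/2$ is strict, and when $\widetilde m-d/2-k$ is an integer any $\gamma\in(0,1)$ still works. Re-running the previous estimate with the extra weight $|\xi|^{\gamma}$ — legitimate since $2(\widetilde m-|\alpha|-\gamma)>d$ for $|\alpha|\le k$ — and using $|e^{ix\cdot\xi}-e^{iy\cdot\xi}|\le 2^{1-\gamma}|\xi|^{\gamma}|x-y|^{\gamma}$, one gets $[D^\alpha v]_{\mathrm{C}^{0,\gamma}}\le C\|v\|_{\mathrm{H}^{\widetilde m}}$; combined with the previous step and the extension, this is the continuous embedding $\mathrm{H}^{\widetilde m}(Q)\hookrightarrow\mathrm{C}^{k,\gamma}(\overline Q)$. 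Consequently, a bounded sequence $(u_n)$ in $\mathrm{H}^{\widetilde m}(Q)$ is bounded in $\mathrm{C}^{k,\gamma}(\overline Q)$, so the families $\{D^\alpha u_n\}_n$, $|\alpha|\le k$, are uniformly bounded and uniformly $\gamma$-Hölder — hence equicontinuous — on the compact set $\overline Q$; Arzelà--Ascoli extracts a subsequence converging in $\mathrm{C}^k(\overline Q)$, which is the claimed compactness.

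The only genuinely non-elementary ingredient — and hence the main obstacle — is the existence of a bounded extension operator on an arbitrary bounded Lipschitz domain, which I would cite (Stein) rather than reprove; a minor technical point is ensuring one can always slip a strictly positive Hölder exponent $\gamma$ strictly below $\widetilde m-d/2-k$, but this is immediate from the strict inequality in the hypothesis. Everything else reduces to the two weighted Fourier integrals above and a standard Arzelà--Ascoli argument.
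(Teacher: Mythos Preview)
Your proof is correct and follows the standard route for Sobolev embedding theorems: Stein extension to reduce to $\mathbb{R}^d$, the Fourier-transform/Cauchy--Schwarz estimate for the continuous embedding, and a Hölder upgrade plus Arzelà--Ascoli for compactness. All the estimates are valid, and your observation that one can always insert a strictly positive Hölder exponent $\gamma$ between $k$ and $\widetilde m - d/2$ is exactly what makes the compactness argument work.

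That said, there is nothing to compare against: the paper does not prove this theorem. It is stated as a citation of Theorem~7.19 in Salsa's book and used as a black box (specifically, the inequality~\eqref{eq:embedding_H_m_C_k} is invoked once in the proof of Lemma~\ref{lm:continuity_Fokker_Planck_relation_slow_variable} to pass from an $\mathrm{H}^{m+3}$ bound to a $\mathrm{C}^{2,1}_{\mathrm{hol}}$ bound). So your proposal is not an alternative to the paper's proof but rather a self-contained argument for a result the paper takes as known.
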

 \begin{theorem}[Arzelá-Ascoli for the space $\text{C}_{\text{hol}}^{k.\gamma}\big(\overline{Q}\big)$ \protect{\cite[Hypothesis H8 together with Theorem 1.7]{RealOton}}]
 	\label{tm:arzela_ascoli_espaco_holder}
 	Given $\gamma\in(0,1]$, let $Q\subset\mathbb{R}^d$ be an open set and consider $\big(u_n\big)_{n\geqslant0}$ a sequence of functions, satisfying
 	\begin{equation*}
 		\|u_n\|_{\emph{C}_{\emph{hol}}^{\emph{k},\gamma}\big(\overline{Q}\big)}\leqslant C_0
 	\end{equation*}
 	where $C_0>0$ is a constant independent of $n$. Then there exists a subsequence $\big(u_{n_i}\big)_{i\geqslant0}$ and a function $u\in \emph{C}_{\emph{hol}}^{k,\gamma}\big(\overline{Q}\big)$, such that this sequence converges uniformly to $u$ in $\emph{C}^k\big(\overline{Q}\big)$.
 \end{theorem}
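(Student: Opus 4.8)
The plan is to reduce the assertion to the classical Arzel\'a--Ascoli theorem by unpacking what the bound $\|u_n\|_{\text{C}_{\text{hol}}^{k,\gamma}(\overline{Q})}\leqslant C_0$ says about each derivative separately. I will work under the (implicit) assumption that $\overline{Q}$ is compact, which is the case in every application of this result in the paper, where $Q=\mathbb{T}^{d_Y}$; this compactness is essential, since for an unbounded $Q$ a fixed bump translated off to infinity has uniformly bounded $\text{C}^{k,\gamma}$ norm yet admits no uniformly convergent subsequence. From the definition of $\|\cdot\|_{\text{C}_{\text{hol}}^{k,\gamma}(\overline{Q})}$ the bound yields, uniformly in $n$: $\|D^{\alpha}u_n\|_{\text{C}(\overline{Q})}\leqslant C_0$ for all $|\alpha|\leqslant k$; for $|\alpha|\leqslant k-1$ all first-order partials of $D^{\alpha}u_n$ are bounded by $C_0$, so $D^{\alpha}u_n$ is uniformly Lipschitz; and for $|\alpha|=k$ one has $|D^{\alpha}u_n(x)-D^{\alpha}u_n(y)|\leqslant C_0\|x-y\|^{\gamma}$. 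In every case the family $\{D^{\alpha}u_n\}_n$ is uniformly bounded and equicontinuous on $\overline{Q}$.

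First I would apply the classical Arzel\'a--Ascoli theorem to the finitely many families $\{D^{\alpha}u_n\}_n$, $0\leqslant|\alpha|\leqslant k$, one after another, passing each time to a further subsequence and then diagonalizing, to obtain a single subsequence $(u_{n_i})_i$ along which $D^{\alpha}u_{n_i}$ converges uniformly on $\overline{Q}$ to some $v_{\alpha}\in\text{C}(\overline{Q})$ for every $|\alpha|\leqslant k$. Next I would identify the limit: set $u:=v_0$ and invoke, inductively on $|\alpha|$, the standard theorem that the uniform limit of a sequence whose derivatives also converge uniformly is differentiable with the expected derivative; this gives $u\in\text{C}^k(\overline{Q})$ with $D^{\alpha}u=v_{\alpha}$, i.e. $u_{n_i}\to u$ in $\text{C}^k(\overline{Q})$.

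It then remains to check $u\in\text{C}_{\text{hol}}^{k,\gamma}(\overline{Q})$. For $|\alpha|=k$ and $x\neq y$ in $\overline{Q}$ I would pass to the limit $i\to\infty$ in the inequality $|D^{\alpha}u_{n_i}(x)-D^{\alpha}u_{n_i}(y)|\leqslant C_0\|x-y\|^{\gamma}$ — pointwise convergence of $D^{\alpha}u_{n_i}$ suffices here — to obtain $|D^{\alpha}u(x)-D^{\alpha}u(y)|\leqslant C_0\|x-y\|^{\gamma}$, hence $[D^{\alpha}u]_{\text{C}_{\text{hol}}^{0,\gamma}(\overline{Q})}\leqslant C_0<\infty$. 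Combined with $\|D^{\alpha}u\|_{\text{C}(\overline{Q})}\leqslant C_0$ (again by passing to the limit), this places $u$ in $\text{C}_{\text{hol}}^{k,\gamma}(\overline{Q})$ and finishes the argument.

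I do not expect any serious obstacle: the whole content is the translation ``$\text{C}^{k,\gamma}$-bound $\Longrightarrow$ uniform boundedness plus equicontinuity of every derivative of order $\leqslant k$'', after which classical Arzel\'a--Ascoli and a diagonal extraction do the work, while lower semicontinuity of the H\"older seminorm under pointwise limits returns the limit to the space. The one point that genuinely needs care — and the reason the hypothesis should be read with $\overline{Q}$ compact — is the impossibility, without that assumption, of upgrading equicontinuity to a uniformly convergent subsequence; since the applications only use the torus, this is immaterial here.
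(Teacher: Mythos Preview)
The paper does not prove this statement; it is quoted as a known result with a citation to \cite{Oton}, so there is no paper-side argument to compare against. Your proof is the standard one and is correct: the $\text{C}^{k,\gamma}$ bound gives uniform boundedness of every $D^{\alpha}u_n$ with $|\alpha|\leqslant k$ and equicontinuity of each such family (H\"older for $|\alpha|=k$, Lipschitz via bounded first partials for $|\alpha|\leqslant k-1$); classical Arzel\'a--Ascoli plus a finite diagonal extraction and the theorem on interchanging uniform limits with differentiation then produce the $\text{C}^k$ convergence, and the H\"older seminorm survives passage to the pointwise limit.

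Two minor remarks. First, the applications in Appendix~\ref{sec:appendix_A} actually invoke the theorem on closed balls $\overline{Q}_{r/2}$, not on $\mathbb{T}^{d_Y}$ itself (periodicity is used afterwards to transport the limit back to the torus); this does not affect your argument, since closed balls are compact and convex. Second, your step ``bounded first partials $\Rightarrow$ uniformly Lipschitz'' for the lower-order derivatives tacitly uses that points of $\overline{Q}$ can be joined by segments (or at least by paths of comparable length) lying in $\overline{Q}$; compactness alone is not enough for this. On a closed ball this is immediate, so the proof goes through in the cases the paper needs, but your caveat about the implicit hypotheses could be sharpened to include convexity (or quasiconvexity) of the domain alongside compactness.
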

 
  For the results we are going to prove, we need to use various theorems and lemmas that use the second-order elliptic differential operator in different forms which, due to the regularity we impose on the coefficients, are all equivalent. The first one is in the divergent (div) form 
 \begin{equation}
 	\label{eq:differential_operator_second_order_divergent_form}
 	\mathcal{L}_{\text{div}} w \big(y\big) := - \sum_{i=1}^{d} \partial_{y_i} \big( b_i \big(y\big) w \big(y\big) \big) + \frac{1}{2} \sum_{i,j=1}^{d} \partial_{y_i,y_j}^2 \big( a_{ij} \big(y\big) w \big(y\big) \big).
 \end{equation}
 
 For some theorems that we will use, the divergent form \eqref{eq:differential_operator_second_order_divergent_form} is not the most appropriate, and so we will also consider one other equivalent forms for this operator, namely
 \begin{equation}
 	\label{eq:differential_operator_second_order_non_divergent_form}
 	\mathcal{L}_{\text{ndiv}}w\big(y\big):=\frac{1}{2}\sum_{i,j}^{d}\widetilde{a}_{ij}\big(y\big)\partial^2_{y_i,y_j}w\big(y\big)+\sum_{i=1}^{d}\widetilde{b}_i\big(y\big)\partial_{y_i}w\big(y\big)+\widetilde{c}\big(y\big)w\big(y\big)
 \end{equation}
where
 \begin{equation*}
 	\widetilde{a}_{ij}\big(y\big):=a_{ij}\big(y\big),\quad\widetilde{b}_i\big(y\big):=\sum_{j=1}^{d}\partial_{y_i}a_{ij}\big(y\big)-b_i\big(y\big)\quad\text{and}\quad
 	\widetilde{c}\big(y\big):=\frac{1}{2}\sum_{i,j}^{d}\partial^2_{y_i,y_j}a_{ij}\big(y\big)-\sum_{i=1}^d\partial_{y_i}b_i\big(y\big).
 \end{equation*}
 
  We will also consider an auxiliary elliptic PDE that will sometimes be defined on the torus and sometimes on open domains
  \begin{equation}
  	\label{eq:pde_second_order_elliptic_divergent_form}
  	-\mathcal{L}_{\text{div}}w\big(y\big)=f\big(y\big).
  \end{equation}
  
  When we work with the Sobolev Space $\text{H}^1$, the suitable way to define the notion of weak solution, for example, for PDE \eqref{eq:pde_second_order_elliptic_divergent_form}, is as follows. 
 
 Consider the bilinear form $a:\text{H}^1\big(Q\big)\times\text{H}^1\big(Q\big)\to \mathbb{R}$ defined by 
 \begin{equation}
 	\label{eq:bilinear_form_PDE}
 	a\big(u,v\big):=\int_{Q}\Big(\biggl\langle \overline{A}\big(y)\nabla u\big(y\big),\nabla v\big(y\big)\biggr\rangle+ \bigg\langle \overline{b}\big(y\big),\nabla u\big(y\big)\bigg\rangle v\big(y\big)+\overline{c}\big(y\big)u\big(y\big)v\big(y\big)\Big)\,dy\quad\forall u,v\in \text{H}^1\big(Q\big)
 \end{equation}
 where
 \begin{equation*}
 	\overline{A}\big(y\big):=\bigg[\widetilde{a}_{ij}\big(y\big)\bigg],\quad \overline{b}\big(y\big):=\Bigg[\frac{1}{2}\sum_{j=1}^{d}\partial_{y_i}a_{ij}\big(y\big)-\widetilde{b}_i\big(y\big)\Bigg]\quad\text{and}\quad \overline{c}\big(y\big):=-\widetilde{c}\big(y\big).
 \end{equation*}
 
 The bilinear form $a\big(\cdot,\cdot\big)$ is associated with the operator $	-\mathcal{L}_{\text{div}}$ by integration by parts. Thus, we can rewrite the PDE \eqref{eq:pde_second_order_elliptic_divergent_form} as follows 
 \begin{equation*}
 	a\big(u,v\big)=F\big(v\big)
 \end{equation*}
 where the functional $F:\text{H}^1\big(Q\big)\to \mathbb{R}$ is defined by 
 \begin{equation*}
 	F\big(v\big):=\int_{Q}f\big(y\big)v\big(y\big)dy
 \end{equation*}
 with $f\in \text{L}^2\big(Q\big)$. 
 \begin{definition}[Weak Solution in $\text{H}^{\text{1}}\big(Q\big)$]
 	\label{def:weak_solution_H}
 	Given $f\in \emph{L}^2\big(Q\big)$, we say that $w\in \emph{H}^1\big(Q\big)$ is an \emph{Weak Solution} of the PDE \eqref{eq:pde_second_order_elliptic_divergent_form} if 
 	\begin{equation}
 		\label{eq:variational_form_pde}
 		a\big(w,v\big)=F\big(v\big)\quad \forall v\in \emph{H}^1\big(Q\big). 
 	\end{equation}	
  \end{definition}
  
 \begin{theorem}[Interior Regularity \protect{\cite[Theorem 8.12]{Salsa}}]
 	\label{tm:interior_regularity}
 	Let $Q\subset\mathbb{R}^d$ be a domain and $w\in \emph{H}^1\big(Q\big)$ a weak solution of the PDE \eqref{eq:pde_second_order_elliptic_divergent_form}. Furthermore, consider that the coefficients of the uniformly elliptic operator \eqref{eq:differential_operator_second_order_non_divergent_form} (with ellipticity constant $\tilde{c}_0>0$) are of class $\emph{C}^{\emph{m+1}}\big(Q\big)$ with $m\geqslant 1$ and bounded by a constant $\widetilde{C}_{A,b,c}>0$. Then $w\in \emph{H}^{\emph{m+2}}_{\emph{loc}}\big(Q\big)$ and for $\widetilde{Q}\subset\subset Q$ a subdomain of $Q$ we get 
 	\begin{equation}
 		\label{eq:interior_regularity}
 		\|w\|_{\text{\emph{H}}^{\emph{m+2}}\big(\widetilde{Q}\big)}\leqslant \widetilde{\Lambda}\big(d;\tilde{c}_0;\widetilde{C}_{A,b,c}\big)\Bigg(\|f\|_{\emph{H}^{\emph{m}}\big(Q\big)}+\|w\|_{\emph{L}^2\big(Q\big)}\Bigg).
 	\end{equation}
 \end{theorem}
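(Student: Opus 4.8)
The plan is to prove Theorem~\ref{tm:interior_regularity} by induction on the regularity index $m$, using the classical interior difference-quotient technique of Nirenberg for the base case $m=0$ and a differentiate-the-equation bootstrap for the inductive step. Throughout, the weak solution is taken in the sense of Definition~\ref{def:weak_solution_H}, i.e.\ $a(w,v)=F(v)$ for all $v\in\text{H}^1(Q)$ with $a(\cdot,\cdot)$ the bilinear form \eqref{eq:bilinear_form_PDE}; since the hypotheses are phrased through the non-divergence operator \eqref{eq:differential_operator_second_order_non_divergent_form}, the top-order matrix $\overline{A}=[\widetilde{a}_{ij}]$ inherits uniform ellipticity with constant $\tilde{c}_0$, and all coefficients of $a(\cdot,\cdot)$ are bounded in $\text{C}^{m+1}$-norm by a quantity controlled by $\widetilde{C}_{A,b,c}$.

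\textbf{Base case $m=0$.} Fix nested open sets $\widetilde{Q}\subset\subset Q'\subset\subset Q$ and a cutoff $\zeta\in\text{C}_0^\infty(Q')$ with $\zeta\equiv 1$ on $\widetilde{Q}$. For $|h|$ small and $1\le k\le d$, insert the test function $v=-D_k^{-h}\big(\zeta^2\,D_k^h w\big)$ into the weak formulation, where $D_k^h u(y):=\big(u(y+he_k)-u(y)\big)/h$. Using the discrete Leibniz rule and the discrete integration-by-parts identity $\int\phi\,D_k^{-h}\psi=-\int(D_k^h\phi)\psi$, the leading term produces $\int\langle\overline{A}\,D_k^h\nabla w,\zeta^2 D_k^h\nabla w\rangle$, which uniform ellipticity bounds below by $\tilde{c}_0\int\zeta^2\|D_k^h\nabla w\|^2$. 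The remaining terms — those with a derivative falling on $\zeta^2$, the lower-order $\overline{b},\overline{c}$ contributions, and $F(v)$ — are estimated by Cauchy--Schwarz together with $\|D_k^h u\|_{\text{L}^2(Q')}\le\|\nabla u\|_{\text{L}^2(Q)}$, yielding a bound of the form $\varepsilon\int\zeta^2\|D_k^h\nabla w\|^2+C_\varepsilon\big(\|f\|_{\text{L}^2(Q)}^2+\|w\|_{\text{H}^1(Q)}^2\big)$. Absorbing the $\varepsilon$-term gives a bound on $\|D_k^h\nabla w\|_{\text{L}^2(\widetilde{Q})}$ uniform in $h$, hence $\partial_k\nabla w\in\text{L}^2(\widetilde{Q})$ for every $k$, so $w\in\text{H}^2_{\text{loc}}(Q)$ with the estimate; the $\text{H}^1$-norm of $w$ on the right is in turn controlled by $\|f\|_{\text{L}^2}+\|w\|_{\text{L}^2}$ via the standard energy estimate for $a(\cdot,\cdot)$.

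\textbf{Inductive step.} Assume the statement holds with $m-1$ in place of $m$ ($m\ge 1$). Given $f\in\text{H}^m_{\text{loc}}$ and coefficients in $\text{C}^{m+1}$, the inductive hypothesis already yields $w\in\text{H}^{m+1}_{\text{loc}}(Q)$. Differentiating \eqref{eq:pde_second_order_elliptic_divergent_form} weakly in the direction $e_k$, the function $w_k:=\partial_k w$ is, on any $Q'\subset\subset Q$, a weak solution of an elliptic equation of the same structure: its top-order matrix is still $\overline{A}$ (ellipticity unchanged), its lower-order coefficients are derivatives of the original ones and hence lie in $\text{C}^{m}$, and its right-hand side is $\partial_k f$ plus correction terms of the form $(\partial_k\overline{A})\nabla^2 w$, $(\partial_k\overline{b})\nabla w$, $(\partial_k\overline{c})w$ — each a product of a $\text{C}^{m}$ function with a factor in $\text{H}^{m-1}_{\text{loc}}$, so the right-hand side lies in $\text{H}^{m-1}_{\text{loc}}(Q)$. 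Applying the inductive hypothesis to $w_k$ gives $w_k\in\text{H}^{m+1}_{\text{loc}}(Q)$ for every $k$, hence $w\in\text{H}^{m+2}_{\text{loc}}(Q)$, and chaining the two estimates over one extra layer of nested subdomains produces \eqref{eq:interior_regularity} with a constant depending only on $d$, $\tilde{c}_0$ and $\widetilde{C}_{A,b,c}$.

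The genuinely delicate point — the main obstacle — is the base case: selecting the test function $-D_k^{-h}(\zeta^2 D_k^h w)$, carrying out the discrete integration-by-parts cleanly for the divergence-form term, and performing the absorption so that the final constant depends on the coefficients only through the ellipticity constant and their $\text{C}^1$-bound. The inductive step is essentially bookkeeping: a Leibniz count showing which products land in which Sobolev space (one factor is always smooth enough), plus a slight shrinking of the domain at each iteration so that finitely many nested cutoffs suffice. One should also note that localization is essential — no boundary condition is imposed, so the estimate is necessarily interior, and the $\|w\|_{\text{L}^2(Q)}$ term on the right cannot be dropped without further hypotheses.
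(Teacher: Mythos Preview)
The paper does not supply its own proof of this theorem: it is quoted verbatim as \cite{Salsa}[Theorem 8.12] and used as a black box in Appendix~\ref{sec:appendix_A}. There is therefore no ``paper's proof'' to compare against.

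Your argument is the standard Nirenberg difference-quotient method for the base step followed by the usual bootstrap, which is precisely the route taken in the cited reference (and in Evans, Gilbarg--Trudinger, etc.). The outline is correct. Two small remarks: (i) the theorem as stated assumes $m\geq 1$, so your ``base case $m=0$'' is really an auxiliary step preceding the first genuine case of the statement --- this is harmless but worth flagging; (ii) in the inductive step, the correction term $(\partial_k\overline{A})\nabla^2 w$ requires $w\in\text{H}^{m+1}_{\text{loc}}$ to place $\nabla^2 w$ in $\text{H}^{m-1}_{\text{loc}}$, which you do invoke from the inductive hypothesis, so the bookkeeping closes. The dependence of the constant only on $d$, $\tilde c_0$, $\widetilde{C}_{A,b,c}$ (and implicitly on the pair $\widetilde{Q}\subset\subset Q$ through the cutoff layers) is exactly what the statement asserts.
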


  We will also need another inequality which can be found in \cite{BogachevKrylovRockner} page 20. Let $Q_{r_1}:=B\big(0,r_1\big)\subset \mathbb{R}^d$ be a ball of radius $r_1>0$, and consider that the PDE \eqref{eq:pde_second_order_elliptic_divergent_form} admits a solution in $\text{W}^{\text{p,2}}\big(Q_{r_1}\big)$ (in which case we have a strong solution) with $f\in \text{L}^{\text{p}}\big(Q_{r_1}\big)$ and, consider that the hypotheses in item II are satisfied. Then, for $0<r<r_1$ and $p>d$, the following inequality holds
 \begin{equation}
 	\label{eq:auxiliary_inequality_sobolev}
 	\|w\|_{\text{W}^{\text{p,1}}\big(Q_r\big)}\leqslant \Lambda_{\text{d,r}}\big(c_0;C_{A,b,c};r_1\big)\Bigg(\|w\|_{\text{L}^1\big(Q_r\big)}+\|f\|_{\text{L}^{\text{p}}\big(Q_r\big)}\Bigg)
 \end{equation} 
 where $c_0>0$ and $C_{A,b,c}>0$ are constants such that:
 \begin{equation*}
 	A\geqslant c_0 I\quad\text{and}\quad\|A\|_{\text{W}^{\text{p,1}}\big(Q_r\big)}+\|b\|_{\text{L}^1\big(Q_r\big)}+\|c\|_{\text{L}^{\text{p}}\big(Q_r\big)}\leqslant C_{A,b,c}.
 \end{equation*} 
 
  \subsection{Continuity in Relation to the Slow Scale Parameter}
   We now define the notion of non-classical solution of the Fokker-Planck \eqref{eq:fokker_planck_PDE}. The definition adopted here is a particular case of Definition 1.4.1 in \cite[Chapter 1]{BogachevKrylovRocknerShaposhnikov} which can be used when we have enough regularity to work with probability densities instead of measures.
 \begin{definition}[Solution of the Fokker-Planck]
 	\label{def:non-classical_fokker_planck_solution}
 	A density $\rho:\mathbb{T}^{d_Y}\to \mathbb{R}_{+}$ satisfies the Fokker-Planck \eqref{eq:fokker_planck_PDE} if the following equality holds
 	\begin{equation*}
 		\int_{\mathbb{T}^{d_Y}}\mathcal{L}^{\bar{x}}\phi\big(y\big) \rho\big(y\big)\,dy=0\quad \forall \phi \in \emph{C}^{\infty}\big(\mathbb{T}^{d_Y}\big).
 	\end{equation*}
 \end{definition}
 
 Note that when $\rho$ is a classical solution of the Fokker-Planck \eqref{eq:fokker_planck_PDE}, we obtain, in $\text{L}^2\big(\mathbb{T}^{d_Y}\big)$, the following equality 
 \begin{equation*}
 	\int_{\mathbb{T}^{d_Y}}\mathcal{L}^{\bar{x}}\phi\big(y\big) \rho\big(y\big)\,dy=\int_{\mathbb{T}^{d_Y}}\phi\big(y\big) \mathcal{L}^{\bar{x},*}\rho\big(y\big)\,dy=0\quad \forall \phi \in \text{C}^{\infty}\big(\mathbb{T}^{d_Y}\big).
 \end{equation*}
 
 Moreover the operator $\mathcal{L}^{\bar{x},*}$ associated with Fokker-Planck \eqref{eq:fokker_planck_PDE} is in divergent form and therefore
 \begin{equation*}
 	\mathcal{L}^{\bar{x},*}=\mathcal{L}^{\bar{x}}_{\text{div}}.
 \end{equation*}

 \begin{lemma}[Continuity of Fokker-Planck in relation to the slow variable]
 	\label{lm:continuity_Fokker_Planck_relation_slow_variable}
 	Assume that hypotheses II and VI hold. Then the solution of the Fokker-Planck \eqref{eq:fokker_planck_PDE} is continuous with respect to the slow variable.  
 \end{lemma}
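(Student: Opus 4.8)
The plan is to argue by sequential compactness combined with the uniqueness of the classical solution furnished by Theorem \ref{tm:ergodicity_markov_process}. Fix $x_0 \in \mathbb{X}$ and let $(x_n)_{n \geqslant 0}$ be a sequence in $\mathbb{X}$ with $x_n \to x_0$; for each $n$ write $\rho_n := \rho_{x_n}$ for the unique classical solution of the stationary Fokker--Planck equation $\mathcal{L}^{x_n,*}\rho_n = 0$ on $\mathbb{T}^{d_Y}$, normalized by $\rho_n > 0$ and $\int_{\mathbb{T}^{d_Y}}\rho_n(y)\,dy = 1$. It suffices to show $\rho_n \to \rho_{x_0}$ in $\mathrm{C}(\mathbb{T}^{d_Y})$, since then continuity of $\bar x \mapsto \rho_{\bar x}$ follows because $x_0$ and the sequence are arbitrary.

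\textbf{Step 1 (uniform a priori estimates).} First I would collect bounds on $\rho_n$ that are uniform in $n$. By hypothesis II the ellipticity constant $c_0$ of $[\sigma_Y\sigma_Y^{\top}](x,\cdot)$ does not depend on $x$, and by hypothesis VI, together with the compactness of $\mathbb{X}$, the coefficients $\mu_Y(x,\cdot)$ and $[\sigma_Y\sigma_Y^{\top}](x,\cdot)$ of the divergence-form operator $\mathcal{L}^{x,*} = \mathcal{L}^{x}_{\mathrm{div}}$ are bounded in $\mathrm{C}^{m+2}(\mathbb{T}^{d_Y})$ uniformly for $x \in \mathbb{X}$. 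Covering $\mathbb{T}^{d_Y}$ by finitely many balls and applying the estimate \eqref{eq:auxiliary_inequality_sobolev} with $f \equiv 0$, using $\|\rho_n\|_{\mathrm{L}^1(\mathbb{T}^{d_Y})} = 1$, yields a uniform bound $\|\rho_n\|_{\mathrm{W}^{p,1}(\mathbb{T}^{d_Y})} \leqslant C_1$ with $p > d_Y$ (hypothesis VI); Sobolev embedding then gives a uniform Hölder, hence $\mathrm{L}^{\infty}$ and $\mathrm{L}^2$, bound on $\rho_n$. Since each $\rho_n$ is a classical solution it belongs to $\mathrm{H}^1(\mathbb{T}^{d_Y})$, and feeding the uniform $\mathrm{L}^2$ bound into the interior regularity estimate \eqref{eq:interior_regularity} of Theorem \ref{tm:interior_regularity} (applicable since the non-divergence coefficients in \eqref{eq:differential_operator_second_order_non_divergent_form} inherit the required $\mathrm{C}^{m+1}$-regularity and bounds from hypotheses II and VI) produces a uniform bound $\|\rho_n\|_{\mathrm{H}^{m+2}(\mathbb{T}^{d_Y})} \leqslant C_2$. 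Because $m > d_Y/2$, Theorem \ref{tm:embedded_H_m_C_k} then gives a uniform bound $\|\rho_n\|_{\mathrm{C}^k(\mathbb{T}^{d_Y})} \leqslant C_3$ for some $k \geqslant 1$.

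\textbf{Step 2 (extraction, identification, conclusion).} The compact embedding $\mathrm{H}^{m+2}(\mathbb{T}^{d_Y}) \hookrightarrow \mathrm{C}^{k-1}(\mathbb{T}^{d_Y})$ (or Theorem \ref{tm:arzela_ascoli_espaco_holder}) shows that every subsequence of $(\rho_n)$ has a further subsequence converging in $\mathrm{C}(\mathbb{T}^{d_Y})$ to some $\rho^{*} \geqslant 0$, with $\int_{\mathbb{T}^{d_Y}}\rho^{*}(y)\,dy = 1$ by uniform convergence, and with $\rho^{*} \in \mathrm{H}^{m+2}(\mathbb{T}^{d_Y})$ by weak compactness. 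To identify $\rho^{*}$ I would pass to the limit in the weak formulation of Definition \ref{def:non-classical_fokker_planck_solution}: for every $\phi \in \mathrm{C}^{\infty}(\mathbb{T}^{d_Y})$ one has $\int_{\mathbb{T}^{d_Y}}\mathcal{L}^{x_n}\phi(y)\,\rho_n(y)\,dy = 0$, and since the coefficients of $\mathcal{L}^{x}$ depend continuously on $x$ uniformly on $\mathbb{T}^{d_Y}$, we get $\mathcal{L}^{x_n}\phi \to \mathcal{L}^{x_0}\phi$ uniformly while $\rho_n \to \rho^{*}$ uniformly, hence $\int_{\mathbb{T}^{d_Y}}\mathcal{L}^{x_0}\phi(y)\,\rho^{*}(y)\,dy = 0$. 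Elliptic regularity (Theorem \ref{tm:interior_regularity}) makes $\rho^{*}$ a classical solution, and the strong maximum principle (or Harnack inequality) applied to $\mathcal{L}^{x_0}_{\mathrm{div}}\rho^{*} = 0$ together with $\int \rho^{*} = 1$ forces $\inf_{\mathbb{T}^{d_Y}}\rho^{*} > 0$, so $\rho^{*}$ is an admissible classical solution of \eqref{eq:fokker_planck_PDE} with parameter $x_0$. By Theorem \ref{tm:ergodicity_markov_process} this solution is unique, so $\rho^{*} = \rho_{x_0}$; since every subsequential limit equals $\rho_{x_0}$, the whole sequence converges, $\rho_n \to \rho_{x_0}$ in $\mathrm{C}(\mathbb{T}^{d_Y})$, which is the claim.

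\textbf{Main obstacle.} The delicate point is Step 1: obtaining a priori bounds on $\rho_n$ with constants independent of $n$, i.e.\ of the parameter $x_n$. This requires extracting from hypotheses II and VI and the compactness of $\mathbb{X}$ that the ellipticity constant and the pertinent coefficient norms of $\mathcal{L}^{x}$ are uniform over $x \in \mathbb{X}$, and carefully matching each regularity estimate to the divergence or non-divergence form of the operator as appropriate. A secondary technicality is the positivity $\inf \rho^{*} > 0$ of the limiting density, which is needed for $\rho^{*}$ to be admissible in Definition \ref{def:non-classical_fokker_planck_solution} and which is where the Harnack inequality is invoked.
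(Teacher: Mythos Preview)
Your proposal is correct and follows essentially the same route as the paper: uniform a priori bounds obtained from the $L^1$ normalization via the $W^{p,1}$ estimate \eqref{eq:auxiliary_inequality_sobolev}, bootstrapped through interior regularity and Sobolev embedding, then compactness, passage to the limit in the weak formulation of Definition~\ref{def:non-classical_fokker_planck_solution}, and identification of the limit via the uniqueness in Theorem~\ref{tm:ergodicity_markov_process}. One minor difference worth noting is that the paper carries out the local estimates by extending $\rho$ periodically to a large ball in $\mathbb{R}^{d_Y}$ (rather than via a covering of the torus) and pushes the embedding one step further to obtain convergence in $C^2$, i.e.\ continuity of $x\mapsto\partial_{y_i}\rho$ and $x\mapsto\partial^2_{y_iy_j}\rho$ as well, which is used in the subsequent differentiability lemma.
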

 \begin{proof}
 	Having fixed $\bar{x}\in \mathbb{X}$, from item II, it follows, by Theorem \ref{tm:ergodicity_markov_process}, that the PDE \eqref{eq:fokker_planck_PDE} admits a unique classical solution. We denote this solution by
 	\begin{equation}
 		\label{eq:fokker_planck_classical_solution}
 		\rho\big(\bar{x},y\big)=\bigg(\big[\mathcal{L}^{\bar{x},*}\big]^{-1}\big(0\big)\bigg)\big(y\big)\quad \forall y\in\mathbb{T}^{d_Y}.
 	\end{equation}
 	Note that \eqref{eq:fokker_planck_classical_solution} is the solution of the PDE \eqref{eq:fokker_planck_PDE} when we consider the unitary cube $\mathbb{Y}$ with the periodic boundary condition. 
 	In this case, we can extend (by periodicity) this solution to the whole $\mathbb{R}^{d_Y}$ in such a way that the new function continues to satisfy the PDE \eqref{eq:fokker_planck_PDE}. For notational convenience, we will continue to denote this extension by $\rho$.
 	
 	Given a sequence $h_n\downarrow 0$, such that $\bar{x}+vh_n\in \text{Int}\big(\mathbb{X}\big)$, where $v$ is a unity vector in $\mathbb{R}^{d_X}$, we take a ball $Q_r:=B\big(\big(\sqrt{d_Y}/2,...,\sqrt{d_Y}/2\big),r\big)\subset \mathbb{R}^{d_Y}$ of radius $r>4\sqrt{d_Y}$.  Thus, we obtain that  
 	\begin{equation*}
 	  \mathcal{L}^{\bar{x}+vh_n,*}\rho\big(\bar{x}+vh_n,y\big)=0\quad\forall y\in Q_r.
 	\end{equation*}
 	Consider $r_1>r$. From the inequality \eqref{eq:auxiliary_inequality_sobolev}, it follows that 
 	\begin{equation*}
 		\|\rho\big(\bar{x}+vh_n,\cdot\big)\|_{\text{W}^{\text{p,1}}\big(Q_r\big)}
 		\leqslant\Lambda_{d_Y,r}\big(c_0;C_{A,b,c};r_1\big)\|\rho\big(\bar{x}+vh_n,\cdot\big)\|_{\text{L}^1\big(Q_r\big)}.
 	\end{equation*}
 	Since $\rho$ is a density, we get that
 	\begin{equation*}
 		\|\rho\big(\bar{x}+vh_n,\cdot\big)\|_{\text{L}^1\big(Q_r\big)}=\int_{Q_r}\rho\big(\bar{x}+vh_n,y\big)dy\leqslant C_{\mathbb{Y}} \int_{\mathbb{Y}}\rho\big(\bar{x}+vh_n,y\big)dy=C_{\mathbb{Y}},
 	\end{equation*}
 	for a constant $C_{\mathbb{Y}}>0$ proportional to the number of cubes $\mathbb{Y}$ we use to cover the ball $Q_r$ such that the above inequality holds. Therefore,
 	\begin{equation}
 		\label{eq:auxiliary_inequality_24}
 		\|\rho\big(\bar{x}+vh_n,\cdot\big)\|_{\text{W}^{\text{p,1}}\big(Q_r\big)}\leqslant \Lambda_{d_Y,r}\big(c_0;C_{A,b,c};r_1\big)C_{\mathbb{Y}}.
 	\end{equation}
 	On the other hand, by Hölder inequality, for $1\leqslant q\leqslant p\leqslant\infty$
 	\begin{equation}
 		\label{eq:auxiliary_inequality_from_p_q}
 		\|\rho\big(\bar{x}+vh_n,\cdot\big)\|_{\text{L}^{\text{q}}\big(Q_r\big)}\leqslant \lambda\big(Q_r\big)^{\frac{1}{q}-\frac{1}{p}}\|\rho\big(\bar{x}+vh_n,\cdot\big)\|_{\text{L}^{\text{p}}\big(Q_r\big)}.
 	\end{equation}
 	Taking $q=2$ and $p\geqslant2$ in the inequality \eqref{eq:auxiliary_inequality_from_p_q} and combining it with the inequality \eqref{eq:auxiliary_inequality_24}, it follows that 
 	\begin{equation}
 		\label{eq:auxiliary_inequality_25}
 		\|\rho\big(\bar{x}+vh_n,\cdot\big)\|_{\text{L}^2\big(Q_r\big)}\leqslant \lambda\big(Q_r\big)^{\frac{1}{2}-\frac{1}{p}}\Lambda_{d_Y,r}\big(c_0;C_{A,b,c};r_1\big)C_{\mathbb{Y}}=:K_0.
 	\end{equation}
 	From item VI, the drift and dispersion coefficients are of class $\text{C}^{1,m+2}\big(\mathbb{X}\times\mathbb{T}^{d_Y}\big)$ with $m>d_Y/2$, so by the inequality \eqref{eq:interior_regularity} (in the theorem \ref{tm:interior_regularity}) together with the inequality \eqref{eq:auxiliary_inequality_25} it follows that 
 	\begin{equation}
 		\label{eq:auxiliary_inequality_26}
 		\|\rho\big(\bar{x}+vh_n,\cdot\big)\|_{\text{H}^{m+3}\big(Q_{\frac{r}{2}}\big)}\leqslant  \widetilde{\Lambda}\big(d_Y;\widetilde{c}_0;\widetilde{C}_{A,b,c}\big)K_0.
 	\end{equation}
 	Combining the inequality \eqref{eq:embedding_H_m_C_k} for $k=3$ and $\widetilde{m}=m+3$ (in the Theorem \ref{tm:embedded_H_m_C_k}) with the inequality \eqref{eq:auxiliary_inequality_26}, we have that
 	\begin{equation}
 		\label{eq:auxiliary_inequality_27}
 			\|\rho\big(\bar{x}+vh_n,\cdot\big)\|_{\text{C}_{\text{hol}}^{2,1}\big(\overline{Q}_{\frac{r}{2}}\big)}\leqslant \widetilde{\Lambda}_0\big(d_Y;2;Q_{\frac{r}{2}}\big) \widetilde{\Lambda}\big(d_Y;\widetilde{c}_0;\widetilde{C}_{A,b,c}\big)K_0=:K_1.
 	\end{equation}
 	From Theorem \ref{tm:arzela_ascoli_espaco_holder} (Arzelá-Ascoli), there exists a subsequence $\big(\rho\big(\bar{x}+vh_{n_i},\cdot\big)\big)_{i\geqslant0}$ and a function $\varrho\big(\bar{x}, \cdot\big) \in \text{C}_{\text{hol}}^{2,1}\big(\overline{Q}_{\frac{r}{2}}\big)$ such that this subsequence converges uniformly to $\varrho\big(\bar{x},\cdot\big)$ in $\text{C}^2\big(\overline{Q}_{\frac{r}{2}}\big)$. Since the convergence is uniform $\varrho\big(\bar{x},\cdot\big)$ is also non-negative and periodic. Therefore we can consider that $\varrho\big(\bar{x},\cdot\big)$ is defined on the $\mathbb{T}^{d_Y}$. On the one hand, by  Definition \ref{def:non-classical_fokker_planck_solution}, we have that 
 	\begin{equation*}
 		\int_{\mathbb{T}^{d_Y}}	\mathcal{L}^{\bar{x}+vh_{n_i}}\phi\big(y\big)\rho\big(\bar{x}+vh_{n_i},y\big)dy=0\quad\forall \phi \in \text{C}^{\infty}\big(\mathbb{T}^{d_Y}\big).
 	\end{equation*}
 	On the other hand, from item II, it follows that
 	\begin{equation*}
 		\lim_{i\to+\infty}	\mathcal{L}^{\bar{x}+vh_{n_i}}\phi\big(y\big)=\mathcal{L}^{\bar{x}}\phi\big(y\big)\quad \forall y\in \mathbb{T}^{d_Y}.
 	\end{equation*}
 	Furthermore, by item II and given that $\phi \in \text{C}^{\infty}\big(\mathbb{T}^{d_Y}\big)$, there exists a constant $C_{A,b,c,\phi}>0$ (independent of $\bar{x}+vh_{n_i}$) such that
 	\begin{equation*}
 		\big|\mathcal{L}^{\bar{x}+vh_{n_i}}\phi\big(y\big)\big|\leqslant C_{A,b,c,\phi}\quad \forall y\in \mathbb{T}^{d_Y}.
 	\end{equation*}
 	Combining this bound with the inequality \eqref{eq:auxiliary_inequality_27}, we obtain by the dominated convergence theorem that 
 	\begin{equation*}
 		\int_{\mathbb{T}^{d_Y}}	\mathcal{L}^{\bar{x}}\phi\big(y\big)\varrho\big(\bar{x},y\big)dy=0\quad\forall \phi \in \text{C}^{\infty}\big(\mathbb{T}^{d_Y}\big).
 	\end{equation*}
 	As $\varrho\big(\bar{x},\cdot\big) \in \text{C}^2\big(\mathbb{T}^{d_Y}\big)$ it follows, from the above equality, that 
 	\begin{equation*}
 		\int_{\mathbb{T}^{d_Y}}	\phi\big(y\big)\mathcal{L}^{\bar{x},*}\varrho\big(\bar{x},y\big)dy=0\quad\forall \phi \in \text{C}^{\infty}\big(\mathbb{T}^{d_Y}\big).
 	\end{equation*}
 	Which implies
 	\begin{equation*}
 		\mathcal{L}^{\bar{x},*}\varrho\big(\bar{x},y\big)=0\quad \forall y\in \mathbb{T}^{d_Y}.
 	\end{equation*}
 	By the inequality \eqref{eq:auxiliary_inequality_27} together with the dominated convergence theorem we obtain that 
 	\begin{equation*}
 		1=\lim_{i\to+\infty}\int_{\mathbb{T}^{d_Y}}\rho\big(\bar{x}+vh_{n_i},y\big)dy=\int_{\mathbb{T}^{d_Y}}\varrho\big(\bar{x},y\big)dy.
 	\end{equation*}
 	Therefore, $\varrho\big(\bar{x},\cdot\big)$ is a probability density and a classical solution of the Fokker-Planck \eqref{eq:fokker_planck_PDE}. Moreover, we obtain by uniqueness of classical solutions for the Fokker-Planck \eqref{eq:fokker_planck_PDE} on the torus $\mathbb{T}^{d_Y}$ that $\varrho\equiv\rho$. By a classical analysis argument, the sequence $\big(\rho\big(\bar{x}+vh_n,\cdot\big)\big)_{n\geqslant0}$ converges uniformly to $\rho\big(\bar{x},\cdot\big)$. Furthermore, the sequences $\big(\partial_{y_i}\rho\big(\bar{x}+vh_n,\cdot\big)\big)_{n\geqslant0}$ and $\big(\partial^2_{y_i,y_j}\rho\big(\bar{x}+vh_n, \cdot\big)\big)_{n\geqslant0}$ also converges uniformly to $\partial_{y_i}\rho\big(\bar{x},\cdot\big)$ and $\partial^2_{y_i,y_j}\rho\big(\bar{x},\cdot\big)$ respectively. We conclude that the applications $x\mapsto\rho\big(x,\cdot\big)$, $x\mapsto\partial_{y_i}\rho\big(x,\cdot\big)$ and $x\mapsto\partial^2_{y_iy_j}\rho\big(x,\cdot\big)$ are continuous.
 \end{proof}
 \subsection{Differentiability in Relation to the Slow Scale Parameter}
  Now we want to investigate the differentiability of the Fokker-Planck \eqref{eq:fokker_planck_PDE} with respect to the slow variable and for this we will consider the auxiliary PDE \eqref{eq:pde_second_order_elliptic_divergent_form} on the torus $\mathbb{T}^d$.  We want to prove the existence and uniqueness of periodic solutions to this PDE and to do this we need to define the space in which this solution will be searched. We will use Sobolev Space for periodic functions in an approach based on the works \cite{CioranescuDonato} and \cite{PavliotisStuart}.  Thus, the most appropriate space for proving the existence of solutions is the space  $\text{H}^{\text{k}}\big(\mathbb{T}^d\big)$ defined by
 \begin{equation*}
 	\text{H}^{\text{k}}\big(\mathbb{T}^d\big):=\overline{\text{C}^{\infty}\big(\mathbb{T}^d\big)}^{\|\cdot\|_{\text{W}^{\text{2,k}}}}.
 \end{equation*}
 More precisely, we will look for weak solutions and thus we consider $k=1$. In addition, note that for $p=2$ and $k=1$ the morm \eqref{eq:norm_sobolev} reduces to 
 \begin{equation}
 	\label{eq:periodic_sobolev_hilbert_space_norm_with_constants}
 	\|u\|_{\text{H}^{\text{1}}\big(\mathbb{T}^d\big)}=\sqrt{\|u\|^{\text{2}}_{\text{L}^{\text{2}}\big(\mathbb{T}^d\big)}+\|\nabla u\|^{\text{2}}_{\text{L}^{\text{2}}\big(\mathbb{T}^d\big)}}.
 \end{equation}
 where $\text{L}^{\text{2}}\big(\mathbb{T}^d\big):=\overline{\text{C}^{\infty}\big(\mathbb{T}^d\big)}^{\|\cdot\|_{\text{L}^2}}$.
 
As we are considering a problem with a periodic boundary condition, we need to consider a space in which the only constant function is the identically null one. It is therefore necessary to consider the following subspace of $\text{H}^{\text{1}}\big(\mathbb{T}^d\big)$
 \begin{equation*}
 	\mathfrak{H}^{\text{1}}\big(\mathbb{T}^d\big):=\Bigg\{u\in\text{H}^{\text{1}}\big(\mathbb{T}^d\big):\int_{\mathbb{T}^d}u\big(y\big)dy=0 \Bigg\}.
 \end{equation*}
 In this case, the only constant functions of $\mathfrak{H}^{\text{1}}\big(\mathbb{T}^d\big)$ are the identically zero ones. The dual of $\mathfrak{H}^{\text{1}}\big(\mathbb{T}^d\big)$ has the following representation
 \begin{equation}
 	\label{eq:dual_periodic_sobolev_hilbert_space_without_constants}
 	\big(\mathfrak{H}^{\text{1}}\big(\mathbb{T}^d\big)\big)^{*}=\Biggl\{F\in \big(\text{H}^{\text{1}}\big(\mathbb{T}^d\big)\big)^{*}: F\big(\textbf{1}\big)=0\Biggr\}.
 \end{equation}
 Finally, another space we will need is 
 \begin{equation*}
 	\mathfrak{L}^2\big(\mathbb{T}^d\big):=\Biggl\{u\in\text{L}^2\big(\mathbb{T}^d\big):\int_{\mathbb{T}^d}u\big(y\big)dy=0 \Biggr\}.
 \end{equation*}
 
  Our strategy for proving the existence of solutions to PDE \eqref{eq:fokker_planck_PDE} is to use the \emph{Fredholm Alternative} and the \emph{Weak Maximum Principle}.
 \begin{theorem}[Fredholm Alternative -  \protect{\cite[Theorem 7.9]{PavliotisStuart}}]
 	\label{tm:fredholm_alternative}
 	Assume that the operator $-\mathcal{L}_{\emph{div}}$ is uniformly elliptic and that its coefficients are of class $\emph{C}^1\big(\mathbb{T}^d\big)$. Then only one of the alternatives holds:
 	\begin{itemize}
 		\item For every $f\in \mathfrak{L}^2\big(\mathbb{T}^d\big)$, there is a unique weak solution $w\in \mathfrak{H}^1\big(\mathbb{T}^d\big)$ of the PDE
 	    \begin{equation}
 			\label{eq:non-homogeneous_auxiliary_PDE}
 			-\mathcal{L}_{\emph{div}}w\big(y\big)=f\big(y\big)\quad \forall y\in\mathbb{T}^d.
 		\end{equation} 
 		\item The homogeneous equation 
 		\begin{equation}
 			\label{eq:homogeneous_auxiliary_PDE}
 		-\mathcal{L}_{\emph{div}}w\big(y\big)=0 \quad \forall y\in \mathbb{T}^d
 		\end{equation}
 		has at least one non-trivial solution in $\mathfrak{H}^1\big(\mathbb{T}^d\big)$.
 	\end{itemize}
 \end{theorem}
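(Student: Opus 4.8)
The plan is to reduce \eqref{eq:non-homogeneous_auxiliary_PDE}--\eqref{eq:homogeneous_auxiliary_PDE} to the Riesz--Schauder (Fredholm) theory for compact perturbations of the identity on a Hilbert space, via the classical Lax--Milgram plus Rellich compactness scheme, working throughout in the zero-mean subspaces $\mathfrak{H}^1\big(\mathbb{T}^d\big)$ and $\mathfrak{L}^2\big(\mathbb{T}^d\big)$. First I would record the two standard properties of the bilinear form $a(\cdot,\cdot)$ attached in \eqref{eq:bilinear_form_PDE} to $-\mathcal{L}_{\text{div}}$: since the coefficients of $-\mathcal{L}_{\text{div}}$ are of class $\text{C}^1\big(\mathbb{T}^d\big)$ and hence bounded on the compact torus, $a$ is bounded on $\text{H}^1\big(\mathbb{T}^d\big)\times\text{H}^1\big(\mathbb{T}^d\big)$; and since the principal part is uniformly elliptic, applying Young's inequality to the first- and zeroth-order terms yields a G\aa{}rding inequality $a(u,u)\geqslant\theta\|u\|_{\text{H}^1}^2-\lambda_0\|u\|_{\text{L}^2}^2$ for some $\theta>0$ and $\lambda_0\geqslant0$. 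Fixing $\lambda\geqslant\lambda_0+\theta$, the shifted form $a_\lambda(u,v):=a(u,v)+\lambda\int_{\mathbb{T}^d}uv\,dy$ is bounded and coercive on $\mathfrak{H}^1\big(\mathbb{T}^d\big)$, so Lax--Milgram produces a bounded solution operator $G_\lambda\colon\big(\mathfrak{H}^1\big(\mathbb{T}^d\big)\big)^{*}\to\mathfrak{H}^1\big(\mathbb{T}^d\big)$.

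Second, I would bring in compactness: the embedding $\mathfrak{H}^1\big(\mathbb{T}^d\big)\hookrightarrow\mathfrak{L}^2\big(\mathbb{T}^d\big)$ is compact (Rellich on the torus), and dualizing, $\mathfrak{L}^2\big(\mathbb{T}^d\big)$ embeds continuously into $\big(\mathfrak{H}^1\big(\mathbb{T}^d\big)\big)^{*}$ as in \eqref{eq:dual_periodic_sobolev_hilbert_space_without_constants}. Consequently the operator $K_\lambda\colon\mathfrak{L}^2\big(\mathbb{T}^d\big)\to\mathfrak{L}^2\big(\mathbb{T}^d\big)$ obtained by restricting $G_\lambda$ to $\mathfrak{L}^2\big(\mathbb{T}^d\big)$ and composing with the embedding $\mathfrak{H}^1\big(\mathbb{T}^d\big)\hookrightarrow\mathfrak{L}^2\big(\mathbb{T}^d\big)$ is compact. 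By the very definition \eqref{eq:variational_form_pde} of weak solution, $w\in\mathfrak{H}^1\big(\mathbb{T}^d\big)$ solves \eqref{eq:non-homogeneous_auxiliary_PDE} if and only if $a_\lambda(w,v)=\int_{\mathbb{T}^d}(f+\lambda w)v\,dy$ for all $v\in\mathfrak{H}^1\big(\mathbb{T}^d\big)$, that is, if and only if $(I-\lambda K_\lambda)w=G_\lambda f$ in $\mathfrak{L}^2\big(\mathbb{T}^d\big)$; and conversely any $\mathfrak{L}^2$-solution of this fixed-point equation equals $G_\lambda(f+\lambda w)$, hence lies in $\mathfrak{H}^1\big(\mathbb{T}^d\big)$, so the two formulations coincide.

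Third, I would invoke the Fredholm alternative for the compact operator $\lambda K_\lambda$ on $\mathfrak{L}^2\big(\mathbb{T}^d\big)$, which gives exactly the stated dichotomy: either $I-\lambda K_\lambda$ is invertible, in which case \eqref{eq:non-homogeneous_auxiliary_PDE} has a unique solution $w\in\mathfrak{H}^1\big(\mathbb{T}^d\big)$ for every $f\in\mathfrak{L}^2\big(\mathbb{T}^d\big)$; or $\ker(I-\lambda K_\lambda)\neq\{0\}$, its nonzero elements being precisely the nontrivial weak solutions in $\mathfrak{H}^1\big(\mathbb{T}^d\big)$ of the homogeneous equation \eqref{eq:homogeneous_auxiliary_PDE}. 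These cases are mutually exclusive because, for a compact perturbation of the identity, injectivity is equivalent to surjectivity. The restriction to zero-mean spaces is harmless and in fact forced: integrating $-\mathcal{L}_{\text{div}}w$ over the torus annihilates every exact derivative present in the divergence form, so $\int_{\mathbb{T}^d}(-\mathcal{L}_{\text{div}}w)\,dy=0$, which is why $f$ must belong to $\mathfrak{L}^2\big(\mathbb{T}^d\big)$ and why testing only against mean-zero functions costs nothing.

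I do not expect a genuinely hard step here; the argument is a template application of Lax--Milgram, Rellich, and Riesz--Schauder. The part that most repays care is the bookkeeping around the mean-zero subspaces: verifying that $G_\lambda$ actually ranges in $\mathfrak{H}^1\big(\mathbb{T}^d\big)$, that $\mathfrak{L}^2\big(\mathbb{T}^d\big)\hookrightarrow\big(\mathfrak{H}^1\big(\mathbb{T}^d\big)\big)^{*}$ with the identification \eqref{eq:dual_periodic_sobolev_hilbert_space_without_constants}, and that passing to the $\mathfrak{L}^2$ fixed-point equation neither enlarges nor shrinks the solution set; together with deriving the G\aa{}rding inequality cleanly from the uniform ellipticity and the $\text{C}^1$ bounds on the coefficients. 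Once these points are settled, the abstract theory delivers the conclusion verbatim.
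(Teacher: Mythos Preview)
Your argument is correct and is the standard textbook proof of the Fredholm alternative for uniformly elliptic operators in divergence form via G\aa{}rding's inequality, Lax--Milgram, Rellich compactness, and Riesz--Schauder theory. Note, however, that the paper does not supply its own proof of this theorem: it is quoted as Theorem 7.9 of \cite{Stuart} and used as a black box, so there is no in-paper argument to compare against; your outline is precisely the approach one expects in that reference, and the care you flag around the mean-zero subspaces $\mathfrak{H}^1\big(\mathbb{T}^d\big)$ and $\mathfrak{L}^2\big(\mathbb{T}^d\big)$ is exactly where the periodic version differs from the Dirichlet case.
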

 
  By theorem \ref{tm:fredholm_alternative}, in order to solve  \eqref{eq:pde_second_order_elliptic_divergent_form} we need to show that \eqref{eq:homogeneous_auxiliary_PDE} admits only the null solution in the space $\mathfrak{H}^1\big(\mathbb{T}^d\big)$. With this in mind, we present the Weak Maximum Principle below.
 \begin{theorem}[Weak Maximum Principle - \protect{\cite[Theorem 2 from Section 6.4]{Evans}}]
 	\label{tm:weak_maximum_principle}
 	Consider $Q\subset\mathbb{R}^d$ a bounded open set, $w\in \emph{C}^2\big(Q\big)\cap \emph{C}\big(\overline{Q}\big)$ and $\widetilde{c}\big(y\big)\leqslant 0$ for all $y\in Q$. Then we have the following cases:
 	\begin{itemize}
 		\item If $-\mathcal{L}_{\text{div}}w\big(y\big)\leqslant 0$ for all $y\in Q$, then 
 		\begin{equation*}
 			\max_{\overline{Q}}w\big(y\big)\leqslant \max_{\partial Q}w^{+}\big(y\big).
 		\end{equation*}
 		\item If $-\mathcal{L}_{\text{div}}w\big(y\big)\geqslant 0$ for all $y\in Q$, then 
 		\begin{equation*}
 			\min_{\overline{Q}}w\big(y\big)\geqslant -\max_{\partial Q}w^{-}\big(y\big).
 		\end{equation*}
 	\end{itemize}
 	In particular, if $-\mathcal{L}_{\text{div}}w\big(y\big)=0$ in $y\in Q$, then 
 	\begin{equation*}
 		\max_{\overline{Q}}\big|w\big(y\big)\big|=\max_{\partial Q}\big|w\big(y\big)\big|.
 	\end{equation*}
 \end{theorem}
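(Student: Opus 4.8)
The plan is to deduce everything from the first bullet (the subsolution estimate): the supersolution bullet follows by applying the first to $v:=-w$, since the operator $-\mathcal{L}_{\text{div}}$ and the sign condition $\widetilde{c}\leqslant 0$ are unchanged while $w^{+}$ becomes $w^{-}$ and $\max_{\overline{Q}}$ becomes $-\min_{\overline{Q}}$; and the concluding identity for $-\mathcal{L}_{\text{div}}w=0$ is then immediate. Because the coefficients $a_{ij},b_i$ are $C^{1}$ (indeed $C^{m+2}$ under hypothesis VI), I would first expand $\partial^{2}_{y_iy_j}(a_{ij}w)$ and $\partial_{y_i}(b_iw)$ to rewrite $-\mathcal{L}_{\text{div}}w$ in the non-divergence form $\mathcal{L}_{\text{ndiv}}$ with coefficients $\widetilde a_{ij},\widetilde b_i,\widetilde c$ as displayed just before the statement. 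Uniform ellipticity of $-\mathcal{L}_{\text{div}}$ then means the symmetric matrix $\big(\widetilde a_{ij}(y)\big)$ is positive definite uniformly in $y\in Q$, in particular $\widetilde a_{11}(y)\geqslant\theta>0$, while $\widetilde a_{ij}$ and $\widetilde b_i$ are bounded on $Q$. The hypothesis $-\mathcal{L}_{\text{div}}w\leqslant 0$ becomes $\mathcal{L}_{\text{ndiv}}w\geqslant 0$, i.e. $w$ is a classical subsolution. Introduce also $\mathcal{L}_0 w:=\mathcal{L}_{\text{ndiv}}w-\widetilde c(y)w=\tfrac12\sum_{ij}\widetilde a_{ij}\partial^2_{y_iy_j}w+\sum_i\widetilde b_i\partial_{y_i}w$, the operator with the zeroth-order term dropped.

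\textbf{The core: strict subsolutions and a barrier.} Work on an open set $Q'\subseteq Q$ and suppose first $\mathcal{L}_0 v>0$ on $Q'$ with $v\in C^2(Q')\cap C(\overline{Q'})$. Then $v$ has no interior maximum: if $v(y_0)=\max_{\overline{Q'}}v$ with $y_0\in Q'$, then $Dv(y_0)=0$ and $D^2v(y_0)$ is negative semidefinite, so, since $\big(\widetilde a_{ij}(y_0)\big)$ is positive semidefinite, $\operatorname{Tr}\!\big(\widetilde a(y_0)D^2v(y_0)\big)\leqslant 0$ (simultaneous diagonalization of a pair of symmetric matrices, one of them PSD), hence $\mathcal{L}_0 v(y_0)\leqslant 0$, contradicting strict positivity; therefore $\max_{\overline{Q'}}v=\max_{\partial Q'}v$. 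For a general subsolution $\mathcal{L}_0 v\geqslant 0$, set $v_\delta:=v+\delta e^{\lambda y_1}$; since $\mathcal{L}_0(e^{\lambda y_1})=\big(\tfrac12\widetilde a_{11}\lambda^2+\widetilde b_1\lambda\big)e^{\lambda y_1}\geqslant\big(\tfrac{\theta}{2}\lambda^2-\|\widetilde b\|_{L^\infty(Q')}\,|\lambda|\big)e^{\lambda y_1}>0$ for $\lambda$ large, we get $\mathcal{L}_0 v_\delta>0$, so $\max_{\overline{Q'}}v_\delta=\max_{\partial Q'}v_\delta$; letting $\delta\downarrow 0$ yields $\max_{\overline{Q'}}v=\max_{\partial Q'}v$ for every $\mathcal{L}_0$-subsolution on $Q'$.

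\textbf{Incorporating $\widetilde c\leqslant 0$.} Apply the previous step with $v=w$ on the open set $Q^{+}:=\{y\in Q:\ w(y)>0\}$. If $Q^{+}=\emptyset$ then $w\leqslant 0\leqslant\max_{\partial Q}w^{+}$ and we are done. Otherwise, on $Q^{+}$ one has $\mathcal{L}_0 w=\mathcal{L}_{\text{ndiv}}w-\widetilde c\,w\geqslant -\widetilde c\,w\geqslant 0$, because $\widetilde c\leqslant 0$ and $w>0$ there; thus $w$ is an $\mathcal{L}_0$-subsolution on $Q^{+}$ and Step 2 gives $\max_{\overline{Q^{+}}}w=\max_{\partial Q^{+}}w$. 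Since $\partial Q^{+}\subseteq\partial Q\cup\{w=0\}$ and $w\in C(\overline{Q})$, every value of $w$ on $\partial Q^{+}$ is either $0$ or a value on $\partial Q$, so $\max_{\partial Q^{+}}w\leqslant\max_{\partial Q}w^{+}$. Combining, $\max_{\overline{Q}}w=\max_{\overline{Q^{+}}}w\leqslant\max_{\partial Q}w^{+}$, which is the first bullet; the remaining assertions follow as explained in the first paragraph.

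\textbf{Main obstacle.} The only non-routine point is the interior second-derivative step: one needs the linear-algebra fact that $\operatorname{Tr}(AB)\leqslant 0$ whenever $A$ is symmetric positive semidefinite and $B$ symmetric negative semidefinite, which is exactly what converts ellipticity of $\big(\widetilde a_{ij}\big)$ into the sign producing the contradiction. Everything else — the exponential barrier, the openness of $Q^{+}$ and the description of $\partial Q^{+}$, and the passage from divergence to non-divergence form (legitimate since $a_{ij}\in C^{1}$ suffices) — is bookkeeping already implicit in the way $\widetilde a,\widetilde b,\widetilde c$ are introduced before the statement.
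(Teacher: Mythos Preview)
The paper does not supply its own proof of this theorem; it is quoted verbatim from Evans and used as a black box in the proof of Lemma~\ref{lm:existence_unicity_periodic_solutions_EDP}. Your argument is correct and is precisely the classical Evans proof: pass to non-divergence form, rule out interior maxima for strict $\mathcal{L}_0$-subsolutions via the second-derivative test and the trace inequality $\operatorname{Tr}(AB)\leqslant 0$, upgrade to non-strict subsolutions with the exponential barrier $e^{\lambda y_1}$, and finally absorb the sign condition $\widetilde c\leqslant 0$ by restricting to $Q^{+}=\{w>0\}$.

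One minor inaccuracy worth flagging: in your closing remark you write that ``$a_{ij}\in C^{1}$ suffices'' for the passage from divergence to non-divergence form, but the zeroth-order coefficient $\widetilde c=\tfrac12\sum_{i,j}\partial^{2}_{y_iy_j}a_{ij}-\sum_i\partial_{y_i}b_i$ already requires $a_{ij}\in C^{2}$ and $b_i\in C^{1}$. This is harmless in the paper's setting (hypothesis~VI gives $C^{1,m+2}$ regularity), but the parenthetical should read $C^{2}$ rather than $C^{1}$.
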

  \begin{lemma}[Existence and Uniqueness of solutions for PDE \eqref{eq:pde_second_order_elliptic_divergent_form} in $\mathfrak{H}^1\big(\mathbb{T}^d\big)$]
 	\label{lm:existence_unicity_periodic_solutions_EDP}
 	Assume that the operator \eqref{eq:differential_operator_second_order_divergent_form} is uniformly elliptic and that the coefficients $A$ and $b$ are periodic and of class $\emph{C}^2\big(\mathbb{T}^d\big)$. Furthermore, consider that 
 	\begin{equation*}
 	   \widetilde{c}\big(y\big)\leqslant0 \quad\forall y\in\mathbb{T}^d.
 	\end{equation*}
 	 Then for every $f\in \mathfrak{L}^{2}\big(\mathbb{T}^d\big)$, there is a unique weak solution to the PDE \eqref{eq:pde_second_order_elliptic_divergent_form} in $\mathfrak{H}^1\big(\mathbb{T}^d\big)$. 
 \end{lemma}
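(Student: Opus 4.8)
The plan is to derive the statement from the Fredholm Alternative (Theorem \ref{tm:fredholm_alternative}), so that everything reduces to an injectivity statement for the homogeneous problem \eqref{eq:homogeneous_auxiliary_PDE}, which is then settled by an energy identity obtained from the variational formulation \eqref{eq:variational_form_pde} (the Weak Maximum Principle, Theorem \ref{tm:weak_maximum_principle}, being available as an alternative). First I would verify the hypotheses of Theorem \ref{tm:fredholm_alternative}: uniform ellipticity of $-\mathcal{L}_{\text{div}}$ is assumed, and since $A,b\in\text{C}^2(\mathbb{T}^d)$ the divergence-form data entering the bilinear form \eqref{eq:bilinear_form_PDE} (that is, $\overline{A}=[\widetilde a_{ij}]$, $\overline{b}$ and $\overline{c}$) inherit the required $\text{C}^1$, respectively bounded, regularity. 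Moreover $f\in\mathfrak{L}^2(\mathbb{T}^d)$ means the associated functional $F$ annihilates the constants, so $F\in(\mathfrak{H}^1(\mathbb{T}^d))^{*}$ as in \eqref{eq:dual_periodic_sobolev_hilbert_space_without_constants}; hence the Fredholm dichotomy is precisely: either \eqref{eq:pde_second_order_elliptic_divergent_form} has a unique weak solution in $\mathfrak{H}^1(\mathbb{T}^d)$ for every such $f$, or \eqref{eq:homogeneous_auxiliary_PDE} has a nontrivial weak solution in $\mathfrak{H}^1(\mathbb{T}^d)$.

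The core step is to exclude the second alternative. Let $w\in\mathfrak{H}^1(\mathbb{T}^d)$ solve \eqref{eq:homogeneous_auxiliary_PDE} weakly, i.e.\ $a(w,v)=0$ for all $v\in\text{H}^1(\mathbb{T}^d)$. I would test with $v=w$ and integrate the first-order term by parts on the torus (legitimate because $\overline{b}\in\text{C}^1$ and $\text{C}^\infty(\mathbb{T}^d)$ is dense in $\text{H}^1(\mathbb{T}^d)$), using $\int\langle\overline{b},\nabla w\rangle w=\tfrac12\int\langle\overline{b},\nabla(w^2)\rangle=-\tfrac12\int(\text{div}\,\overline{b})w^2$. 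A direct computation with the definitions of $\widetilde a,\widetilde b,\widetilde c$ and $\overline A,\overline b,\overline c$ shows that the zeroth-order coefficient surviving this manipulation, $\overline c-\tfrac12\,\text{div}\,\overline b$, reduces to a nonnegative multiple of $-\widetilde c$, hence is $\geqslant 0$ by the sign hypothesis $\widetilde c\leqslant 0$. Therefore
\[
0=a(w,w)=\int_{\mathbb{T}^d}\langle\overline A(y)\nabla w(y),\nabla w(y)\rangle\,dy+\int_{\mathbb{T}^d}\Big(\overline c(y)-\tfrac12\,\text{div}\,\overline b(y)\Big)\,w(y)^2\,dy ,
\]
with both summands nonnegative and the first bounded below by $c_0\|\nabla w\|_{\text{L}^2(\mathbb{T}^d)}^2$ by uniform ellipticity. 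Hence $\nabla w\equiv 0$, so $w$ is constant on the connected torus, and the mean-zero constraint defining $\mathfrak{H}^1(\mathbb{T}^d)$ forces $w\equiv 0$, contradicting nontriviality. Thus the first Fredholm alternative holds, which is exactly the assertion.

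As a variant aligned with the tools emphasized in the text, one can instead first promote $w$ to a classical solution: rewriting \eqref{eq:homogeneous_auxiliary_PDE} in the equivalent non-divergence form $\mathcal{L}_{\text{ndiv}}w=0$, interior elliptic regularity (Theorem \ref{tm:interior_regularity}) together with the Sobolev embedding of Theorem \ref{tm:embedded_H_m_C_k}, all estimates being global since $\mathbb{T}^d$ is compact and boundary-free, yields $w\in\text{C}^2(\mathbb{T}^d)$; lifting $w$ periodically to $\mathbb{R}^d$ and applying the Weak Maximum Principle (Theorem \ref{tm:weak_maximum_principle}, the ``in particular'' clause, valid because $\widetilde c\leqslant 0$) on balls then pins $w$ down. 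I expect the main obstacle to be exactly this regularity route: the non-divergence coefficients $\widetilde b$ and $\widetilde c$ involve one and two derivatives of $A,b$, so applying Theorem \ref{tm:interior_regularity} at the lowest order and then reaching $\text{C}^2$ needs either slightly more smoothness than the bare $\text{C}^2(\mathbb{T}^d)$ hypothesis or a careful iteration, and invoking the maximum principle on a boundaryless torus requires the lifting-to-a-ball device to be meaningful. The energy argument in the previous paragraph sidesteps all of this, needing only $w\in\text{H}^1$, so I would make it the primary line and relegate the maximum-principle computation to a remark.
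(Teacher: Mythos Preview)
Your proposal is correct, and your primary energy argument takes a genuinely different route from the paper. The paper argues exclusively via the Weak Maximum Principle: it takes a solution $\psi\in\text{C}^2(\mathbb{T}^d)$ of the homogeneous equation, lifts it periodically to $\mathbb{R}^d$, applies Theorem~\ref{tm:weak_maximum_principle} on the cube $\mathbb{Y}$ and then on translates $\mathbb{Y}_a=a+\mathbb{Y}$, and observes that by sliding $a$ the boundary maximum can be pushed into the interior of the original cube, forcing $\psi$ to be constant; the mean-zero constraint then gives $\psi\equiv 0$, and Fredholm finishes. Your energy identity, by contrast, works directly at the $\text{H}^1$ level: testing $a(w,w)=0$, integrating the drift term by parts, and checking that the resulting zeroth-order coefficient $\overline{c}-\tfrac12\,\text{div}\,\overline{b}=-\tfrac12\,\widetilde{c}\geqslant 0$ immediately yields $\nabla w=0$. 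This buys you exactly what you note: no need to promote the weak solution to $\text{C}^2$, a step the paper in fact skips over without comment when it simply writes ``Consider $\psi\in\text{C}^2(\mathbb{T}^d)$''. Your variant paragraph essentially recovers the paper's argument (the lifting-to-translated-cubes device is precisely what the paper does) while being more explicit about the regularity input needed to invoke Theorem~\ref{tm:weak_maximum_principle}; so your instinct to make the energy argument primary and the maximum-principle route a remark is sound.
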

 \begin{proof}
  	Consider $\psi\in \text{C}^2\big(\mathbb{T}^d\big)$ such that
 	\begin{equation}
 		\label{eq:toro_auxiliary_PDE_01}
 		-\mathcal{L}_{\text{div}}\psi\big(y\big)=0\quad \forall y\in\mathbb{T}^d.
 	\end{equation}
 	The PDE \eqref{eq:toro_auxiliary_PDE_01} can be rewritten as
 	\begin{subequations}
 		\label{subeq:PDE_auxiliary_cube_periodic_boundary_02}
 		\begin{align}
 			\label{eq:PDE_auxiliary_cube_02}
 			-\mathcal{L}_{\text{div}}\psi&\big(y\big)=0\hspace{0.18cm} \forall y\in\mathbb{Y}\\
 			\label{eq:PDE_auxiliary_periodic_boundary_cube 02}
 			\psi&\hspace{0.18cm} \mathbb{Y}\text{-Periodic}. 
 		\end{align}
 	\end{subequations}
 	 By Theorem \ref{tm:weak_maximum_principle} (Weak Maximum Principle) it follows that
 	\begin{equation*}
 		\max_{\mathbb{Y}}\big|\psi\big(y\big)\big|=\max_{\partial\mathbb{Y}}\big|\psi\big(y\big)\big|.
 	\end{equation*}
 	Given $a\in\mathbb{R}^d$, consider the set $\mathbb{Y}_a:=a+\mathbb{Y}$. For every $z\in \mathbb{Y}_a$ there exist $k_{z}\in\mathbb{Z}^d$ and $\gamma_{z}\in\mathbb{Y}$ such that $z=\gamma_{z}+k_{z}$. This means, for all $z\in\mathbb{Y}_a$, that
 	\begin{equation*}
 	-\mathcal{L}_{\text{div}}\psi\big(z\big)=-\mathcal{L}_{\text{div}}\psi\big(\gamma_{z}+k_{z}\big)=-\mathcal{L}_{\text{div}}\psi\big(\gamma_{z}\big)=0.
 	\end{equation*}
 	Therefore, once again, according to the weak maximum principle, we have
 	\begin{equation*}
 	\max_{\mathbb{Y}_a}\big|\psi\big(z\big)\big|=\max_{\partial\mathbb{Y}_a}\big|\psi\big(z\big)\big|.
 	\end{equation*}
 	Since we can choose $a\in\mathbb{R}^d$ in such a way that the points on a part of the boundary $\partial\mathbb{Y}_a$ correspond to points on $\text{Int}\big(\mathbb{Y}\big)$ it follows that $\psi$ also reaches a maximum inside $\mathbb{Y}$ and is therefore a constant function. Thus, we obtain that the only solutions to the problem \eqref{subeq:PDE_auxiliary_cube_periodic_boundary_02} are constant functions. 
 	
 	We are interested in solutions to the problem \eqref{subeq:PDE_auxiliary_cube_periodic_boundary_02} in $\mathfrak{H}^1\big(\mathbb{T}^d\big)$. Since the only constant function in $\mathfrak{H}^1\big(\mathbb{T}^d\big)$ is the identically zero ones, it follows that, in this space, the problem \eqref{subeq:PDE_auxiliary_cube_periodic_boundary_02} has only the trivial solution. On the other hand, this implies that the homogeneous equation
 	\begin{equation*}
 		-\mathcal{L}_{\text{div}}w\big(y\big)=0 \quad \forall y\in \mathbb{T}^d
 	\end{equation*}
 	has only the trivial solution in $\mathfrak{H}^1\big(\mathbb{T}^d\big)$. Therefore, by Theorem \ref{tm:fredholm_alternative} there is only one solution in $\mathfrak{H}^1\big(\mathbb{T}^d\big)$ for
 	 \begin{equation*}
 		-\mathcal{L}_{\text{div}}w\big(y\big)=f\big(y\big)\quad \forall y\in\mathbb{T}^d
 	\end{equation*}
 	for all $f\in \mathfrak{L}^2\big(\mathbb{T}^d\big)$.
 \end{proof}
 
  We now present some results on estimates of the solutions of the PDE \eqref{eq:pde_second_order_elliptic_divergent_form} in relation to the initial condition $f$.
 \begin{lemma}[\protect{\cite[Theorem 6 from Section 6.2]{Evans}}]
 	\label{lm:first_energy_estimate}
 	Assume that $\lambda\notin \sigma\big(	-\mathcal{L}_{\emph{div}}\big)$ (the spectrum of the operator $	-\mathcal{L}_{\emph{div}}$), then there is a constant $C>0$ such that 
 	\begin{equation*}
 		\|w\|_{\mathfrak{L}^2\big(\mathbb{T}^d\big)}\leqslant C \|f\|_{\mathfrak{L}^2\big(\mathbb{T}^d\big)}
 	\end{equation*}
 	whenever $f\in\mathfrak{L}^2\big(\mathbb{T}^d\big)$ and $w\in \mathfrak{H}^1\big(\mathbb{T}^d\big)$ is the only solution of the PDE 
 	\begin{equation}
 		\label{eq:auxiliary_PDE_01}
 		-\mathcal{L}_{\emph{div}}w\big(y\big)=\lambda w\big(y\big)+f\big(y\big)\quad \forall y\in \mathbb{T}^d.
 	\end{equation}
 	The constant $C>0$ only depends on $\lambda$, $\mathbb{T}^d$ and the coefficients of $\mathcal{L}_{\emph{af}}$.
 \end{lemma}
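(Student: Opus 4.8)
The plan is to recognize this lemma as the assertion that, for $\lambda$ outside the spectrum of $-\mathcal{L}_{\text{div}}$, the solution operator of the perturbed equation \eqref{eq:auxiliary_PDE_01} is bounded from $\mathfrak{L}^2\big(\mathbb{T}^d\big)$ into itself, and to prove it by the classical contradiction-plus-compactness argument for a priori estimates of this kind (this is, in essence, the proof of the cited result in \cite{Evans}, transported to the periodic setting). First I would record that $\lambda\notin\sigma\big(-\mathcal{L}_{\text{div}}\big)$, together with Lemma~\ref{lm:existence_unicity_periodic_solutions_EDP} and the Fredholm alternative (Theorem~\ref{tm:fredholm_alternative}) applied to the shifted operator $-\mathcal{L}_{\text{div}}-\lambda$, ensures that for every $f\in\mathfrak{L}^2\big(\mathbb{T}^d\big)$ equation \eqref{eq:auxiliary_PDE_01} has exactly one weak solution $w\in\mathfrak{H}^1\big(\mathbb{T}^d\big)$. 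Writing $w=:S_\lambda f$, the map $S_\lambda$ is well defined and linear, and the assertion to be proved is that it is continuous into $\mathfrak{L}^2\big(\mathbb{T}^d\big)$.

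Suppose it is not. Then there are sequences $f_k\in\mathfrak{L}^2\big(\mathbb{T}^d\big)$ and $w_k=S_\lambda f_k$ with $\|w_k\|_{\mathfrak{L}^2}>k\,\|f_k\|_{\mathfrak{L}^2}$, and after normalizing so that $\|w_k\|_{\mathfrak{L}^2}=1$ we have $\|f_k\|_{\mathfrak{L}^2}\to 0$. The next step is a uniform $\text{H}^1$ bound. By \eqref{eq:variational_form_pde}, $w_k$ satisfies $a\big(w_k,v\big)-\lambda\big(w_k,v\big)=\big(f_k,v\big)$ for all $v\in\mathfrak{H}^1\big(\mathbb{T}^d\big)$, where $(\cdot,\cdot)$ is the $\text{L}^2\big(\mathbb{T}^d\big)$ pairing; taking $v=w_k$ and using the energy (Gårding) estimate for the uniformly elliptic bilinear form \eqref{eq:bilinear_form_PDE}, whose ellipticity is uniform and whose coefficients are of class $\text{C}^1\big(\mathbb{T}^d\big)$ under the standing assumptions (hypotheses~II and~VI), one obtains $\alpha\,\|w_k\|^2_{\text{H}^1}\leqslant\big(a\big(w_k,w_k\big)-\lambda\big(w_k,w_k\big)\big)+\gamma\,\|w_k\|^2_{\text{L}^2}=\big(f_k,w_k\big)+\gamma\leqslant\|f_k\|_{\text{L}^2}+\gamma$, with $\alpha>0$ and $\gamma\geqslant 0$ depending only on $\lambda$, $d$ and the coefficient bounds. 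Hence $(w_k)$ is bounded in $\mathfrak{H}^1\big(\mathbb{T}^d\big)$.

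Then I would pass to the limit. By the classical compact embedding $\text{H}^1\big(\mathbb{T}^d\big)\hookrightarrow\hookrightarrow\text{L}^2\big(\mathbb{T}^d\big)$ (the periodic Rellich--Kondrachov theorem), a subsequence $w_{k_j}$ converges strongly in $\text{L}^2$ and weakly in $\text{H}^1$ to some $w$. Strong $\text{L}^2$ convergence preserves both the zero-mean condition $\int_{\mathbb{T}^d}w=0$ and the normalization $\|w\|_{\text{L}^2}=1$, and since $\mathfrak{H}^1\big(\mathbb{T}^d\big)$ is a closed, hence weakly closed, subspace, $w\in\mathfrak{H}^1\big(\mathbb{T}^d\big)\setminus\{0\}$. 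Fixing $v\in\mathfrak{H}^1\big(\mathbb{T}^d\big)$ and letting $j\to\infty$ in $a\big(w_{k_j},v\big)-\lambda\big(w_{k_j},v\big)=\big(f_{k_j},v\big)$ — the zeroth- and first-order terms of $a$ and the term $\lambda\big(w_{k_j},v\big)$ are continuous under strong $\text{L}^2$ convergence, the second-order term under weak $\text{H}^1$ convergence, and the right-hand side tends to $0$ — gives $a\big(w,v\big)-\lambda\big(w,v\big)=0$ for every $v$, i.e. $w$ is a nontrivial weak solution of $-\mathcal{L}_{\text{div}}w=\lambda w$. This contradicts $\lambda\notin\sigma\big(-\mathcal{L}_{\text{div}}\big)$, so the estimate holds; tracking the constants above yields the stated dependence of $C$ on $\lambda$, $\mathbb{T}^d$ and the coefficients.

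The step I expect to be the main obstacle is not the analysis but the bookkeeping with the periodic function spaces: invoking Gårding's inequality and the compact Sobolev embedding in the form genuinely valid on the torus $\mathbb{T}^d$ (no boundary contributions; actual compactness), and verifying that the weak limit remains inside the zero-mean subspaces $\mathfrak{H}^1$ and $\mathfrak{L}^2$. A shorter alternative, which I would mention as a remark, is to apply the closed graph theorem directly to $S_\lambda\colon\mathfrak{L}^2\big(\mathbb{T}^d\big)\to\mathfrak{L}^2\big(\mathbb{T}^d\big)$: Gårding's inequality shows that $\text{L}^2$-convergence of a sequence of solutions forces $\text{H}^1$-convergence of that sequence, so the graph of $S_\lambda$ is closed, and boundedness is immediate.
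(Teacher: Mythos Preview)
Your proof is correct and follows essentially the same approach as the paper: a contradiction argument in which the normalized sequence $(w_k)$ is shown to be bounded in $\mathfrak{H}^1\big(\mathbb{T}^d\big)$ via the G{\aa}rding-type energy estimate (the paper packages this as Lemma~\ref{lm:auxiliary_EDP_periodic_energy_estimate}), followed by extraction of a weakly $\text{H}^1$- and strongly $\text{L}^2$-convergent subsequence whose limit is a nontrivial solution of the homogeneous equation, contradicting $\lambda\notin\sigma\big(-\mathcal{L}_{\text{div}}\big)$. Your additional remarks on the periodic bookkeeping and the closed-graph shortcut are fine but not needed for the comparison.
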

 
 To prove this result we need the following auxiliary lemma.
 \begin{lemma}[\protect{\cite[Lemma 7.10]{PavliotisStuart}}]
 	\label{lm:auxiliary_EDP_periodic_energy_estimate}
 	Assume that the operator \eqref{eq:differential_operator_second_order_divergent_form} is uniformly elliptic (with ellipticity coefficient $c_0>0$). Then there is a constant $\mu_{\lambda}>0$ such that the following \emph{energy estimate} holds for PDE \eqref{eq:auxiliary_PDE_01}
 	\begin{equation}
 		\label{eq:auxiliary_PDE_periodic_energy_estimate}
 		\frac{c_0}{2}\|w\|_{\mathfrak{H}^1\big(\mathbb{T}^d\big)}^2\leqslant a_{\lambda}\big(w,w\big)+\mu_{\lambda}\|w\|^2_{\mathfrak{L}^2\big(\mathbb{T}^d\big)}.
 	\end{equation}
 \end{lemma}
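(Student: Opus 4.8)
The plan is to prove \eqref{eq:auxiliary_PDE_periodic_energy_estimate} as a standard Gårding inequality: evaluate the bilinear form on the diagonal and absorb the lower-order contributions into the Dirichlet term via Young's inequality. Here $a_{\lambda}(\cdot,\cdot)$ denotes the bilinear form associated with the operator $-\mathcal{L}_{\text{div}}-\lambda$ on the left-hand side of \eqref{eq:auxiliary_PDE_01}; in the notation of \eqref{eq:bilinear_form_PDE},
\begin{equation*}
a_{\lambda}\big(u,v\big)=\int_{\mathbb{T}^d}\Big(\langle \overline{A}\nabla u,\nabla v\rangle+\langle \overline{b},\nabla u\rangle v+\big(\overline{c}-\lambda\big)uv\Big)\,dy.
\end{equation*}
First I would take $v=w$ and split $a_{\lambda}(w,w)$ into its three integrals. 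Note that the estimate is in fact valid for every $w\in\mathfrak{H}^1(\mathbb{T}^d)$ and does not use the right-hand side $f$ of \eqref{eq:auxiliary_PDE_01} at all.

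The principal part is controlled from below by uniform ellipticity: $\int_{\mathbb{T}^d}\langle\overline{A}\nabla w,\nabla w\rangle\,dy\geqslant c_0\|\nabla w\|_{\text{L}^2(\mathbb{T}^d)}^2$. For the first-order term, I would use that the coefficients $A$ and $b$ are of class $\text{C}^2(\mathbb{T}^d)$ and $\mathbb{T}^d$ is compact, so $B:=\sup_{y\in\mathbb{T}^d}\|\overline{b}(y)\|<\infty$; Cauchy--Schwarz followed by Young's inequality $ab\leqslant\frac{c_0}{2}a^2+\frac{1}{2c_0}b^2$ then gives
\begin{equation*}
\Big|\int_{\mathbb{T}^d}\langle\overline{b},\nabla w\rangle w\,dy\Big|\leqslant B\|\nabla w\|_{\text{L}^2(\mathbb{T}^d)}\|w\|_{\mathfrak{L}^2(\mathbb{T}^d)}\leqslant\frac{c_0}{2}\|\nabla w\|_{\text{L}^2(\mathbb{T}^d)}^2+\frac{B^2}{2c_0}\|w\|_{\mathfrak{L}^2(\mathbb{T}^d)}^2,
\end{equation*}
while the zeroth-order term is bounded crudely by $\big(\sup_{\mathbb{T}^d}|\overline{c}|+|\lambda|\big)\|w\|_{\mathfrak{L}^2(\mathbb{T}^d)}^2$. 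Collecting these, and keeping track of the signs so that the first- and zeroth-order terms are subtracted from $a_{\lambda}(w,w)$, yields
\begin{equation*}
a_{\lambda}\big(w,w\big)\geqslant\frac{c_0}{2}\|\nabla w\|_{\text{L}^2(\mathbb{T}^d)}^2-\Big(\frac{B^2}{2c_0}+\sup_{\mathbb{T}^d}|\overline{c}|+|\lambda|\Big)\|w\|_{\mathfrak{L}^2(\mathbb{T}^d)}^2.
\end{equation*}

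To conclude, I would add $\frac{c_0}{2}\|w\|_{\mathfrak{L}^2(\mathbb{T}^d)}^2$ to both sides and invoke \eqref{eq:periodic_sobolev_hilbert_space_norm_with_constants}, which identifies $\|w\|_{\mathfrak{H}^1(\mathbb{T}^d)}^2$ with $\|w\|_{\mathfrak{L}^2(\mathbb{T}^d)}^2+\|\nabla w\|_{\text{L}^2(\mathbb{T}^d)}^2$, to obtain \eqref{eq:auxiliary_PDE_periodic_energy_estimate} with $\mu_{\lambda}:=\frac{B^2}{2c_0}+\sup_{\mathbb{T}^d}|\overline{c}|+|\lambda|+\frac{c_0}{2}$.

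I do not anticipate a genuine obstacle: the whole argument rests only on uniform ellipticity and on the boundedness of $\overline{b}$ and $\overline{c}$ on the compact torus. The single point requiring care is the tuning of the splitting parameter in Young's inequality, chosen so that precisely a fraction $c_0/2$ of the Dirichlet energy is absorbed and the remaining $c_0/2$ reproduces the constant in the claimed bound; if one additionally wanted the estimate uniform in $\lambda$ over a bounded set, it would suffice to replace $|\lambda|$ by its supremum in the definition of $\mu_{\lambda}$.
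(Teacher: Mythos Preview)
Your argument is correct: it is the standard G\aa rding-type estimate obtained by taking $v=w$ in the bilinear form, using uniform ellipticity on the principal part, and absorbing the first-order term into the Dirichlet energy via Young's inequality with the splitting parameter tuned to leave exactly $\tfrac{c_0}{2}\|\nabla w\|_{\text{L}^2}^2$. The paper does not supply its own proof of this lemma; it simply cites \cite{Stuart}, Lemma~7.10, so there is nothing to compare against beyond noting that your proof is precisely the argument one would expect to find in that reference.
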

 
 We then move on to the proof of the lemma \ref{lm:first_energy_estimate}.
\begin{proof}[Proof of Lemma \ref{lm:first_energy_estimate}]
	The argument is by contradiction. Thus, there would be sequences $\big(f_k\big)_{k\geq1}$ and $\big(w_k\big)_{k\geq 1}$ such that 
	\begin{equation*}
	-\mathcal{L}_{\text{div}}w_k\big(y\big)=\lambda w_k\big(y\big)+f_k\big(y\big)\quad \forall y\in \mathbb{T}^d
	\end{equation*}
	but 
	\begin{equation*}
		\|w_k\|_{\mathfrak{L}^2\big(\mathbb{T}^d\big)}\geqslant k \|f_k\|_{\mathfrak{L}^2\big(\mathbb{T}^d\big)}.
	\end{equation*}
	Assuming, without loss of generality, that $\|w_k\|_{\mathfrak{L}^2\big(\mathbb{T}^d\big)}=1$ we have that $f_k\to 0$ in $\mathfrak{L}^2\big(\mathbb{T}^d\big)$. On the one hand, from the energy estimate \eqref{eq:auxiliary_PDE_periodic_energy_estimate}, it follows that 
	\begin{equation}
		\label{eq:auxiliary_inequality_02}
		\frac{c_0}{2}\|w_k\|_{\mathfrak{H}^1\big(\mathbb{T}^d\big)}^2\leqslant \big|a_{\lambda}\big(w_k,w_k\big)\big|+\mu_{\lambda} \|w_k\|^2_{\mathfrak{L}^2\big(\mathbb{T}^d\big)}.
	\end{equation}
	On the other hand, by definition of a weak solution, we have that 
	\begin{equation*}
		a_{\lambda}\big(w_k,w_k\big)=\int_{\mathbb{T}^d}f_k\big(y\big)w_k\big(y\big)dy.
	\end{equation*}
	Using Hölder's inequality in the above equality, it follows that 
	\begin{equation}
		\label{eq:auxiliary_inequality_03}
		\big|a_{\lambda}\big(w_k,w_k\big)\big|\leqslant\int_{\mathbb{T}^d}\big|f_k\big(y)\big|\big|w_k\big(y\big)\big|dy\leqslant \|f_k\|_{\mathfrak{L}^2\big(\mathbb{T}^d\big)}\|w_k\|_{\mathfrak{L}^2\big(\mathbb{T}^d\big)}.
	\end{equation}
	Combining the inequalities \eqref{eq:auxiliary_inequality_02} and \eqref{eq:auxiliary_inequality_03}, we get 
	\begin{equation*}
		\frac{c_0}{2}\|w_k\|_{\mathfrak{H}^1\big(\mathbb{T}^d\big)}^2\leqslant \|f_k\|_{\mathfrak{L}^2\big(\mathbb{T}^d\big)}\|w_k\|_{\mathfrak{L}^2\big(\mathbb{T}^d\big)}+\mu_{\lambda} \|w_k\|^2_{\mathfrak{L}^2\big(\mathbb{T}^d\big)}\leqslant \frac{1}{k}+\mu_{\lambda}\leqslant 1+\mu_{\lambda}.
	\end{equation*}
	That is,
	\begin{equation*}
		\|w_k\|_{\mathfrak{H}^1\big(\mathbb{T}^d\big)}\leqslant \sqrt{ \frac{2}{c_0}\big(1+\mu_{\lambda}\big)}.
	\end{equation*}
	Since the sequence $\big(w_k\big)_{k\geq 1}$ is bounded in $\mathfrak{H}^1\big(\mathbb{T}^d\big)$ (which is compactly contained in $\mathfrak{L}^2\big(\mathbb{T}^d\big)$), there exists a subsequence $\big(w_{k_j}\big)_{j\geq 1}$ and a function $w$ such that 
	\begin{subequations}
		\begin{align}
			\label{eq:weak_convergence}
			w_{k_j}&\rightharpoonup w \text{ weakly in }\mathfrak{H}^1\big(\mathbb{T}^d\big),\\
			\label{eq:strong_convergence}
			w_{k_j}&\to w \text{ strongly in } \mathfrak{L}^2\big(\mathbb{T}^d\big).
		\end{align}
	\end{subequations}
	So $w$ is a weak solution of the following PDE
	\begin{equation*}
		-\mathcal{L}_{\text{div}}w\big(y\big)=\lambda w\big(y\big)\quad \forall y\in\mathbb{T}^d.
	\end{equation*}
	Since $\lambda \notin \sigma\big(-\mathcal{L}_{\text{div}}\big)$, there exists $\big(-\mathcal{L}_{\text{div}}-\lambda\big)^{-1}$ and so
	\begin{equation*}
		w\big(y\big)=\big(-\mathcal{L}_{\text{div}}-\lambda\big)^{-1}\big(0\big)=0.
	\end{equation*}
	However, by strong convergence \eqref{eq:strong_convergence}, we have that $\|w\|_{\mathfrak{L}^2\big(\mathbb{T}^d\big)}=1$, in contradiction to the fact that $w=0$.
\end{proof}

 \begin{lemma}[Differentiability of Fokker-Planck in relation to the slow variable]
 	Assume that hypotheses II and VI hold. Then the solution of the Fokker-Planck \eqref{eq:fokker_planck_PDE} is differentiable with respect to the slow variable.
 \end{lemma}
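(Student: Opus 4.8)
The plan is to differentiate the stationary Fokker--Planck equation \eqref{eq:fokker_planck_PDE} formally with respect to the slow parameter, identify the linear PDE that the candidate derivative must satisfy, solve it with the tools assembled above, and then show that the difference quotients of $\rho$ converge to its solution. Fix $\bar x\in\text{Int}(\mathbb{X})$ and a unit vector $v\in\mathbb{R}^{d_X}$, and recall from Lemma \ref{lm:continuity_Fokker_Planck_relation_slow_variable} that $x\mapsto\rho(x,\cdot)$ is continuous with values in $\text{C}^2\big(\mathbb{T}^{d_Y}\big)$ and, as extracted in its proof, locally uniformly bounded in $\text{H}^{m+3}\big(\mathbb{T}^{d_Y}\big)$. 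Since by hypothesis VI the coefficients $\mu_{Y}$ and $[\sigma_{Y}\sigma_{Y}^{\top}]$ of $\mathcal{L}^{\bar x,*}$ are of class $\text{C}^{1,m+2}\big(\mathbb{X}\times\mathbb{T}^{d_Y}\big)$, the operator $\partial_{v}\mathcal{L}^{\bar x,*}$ obtained by differentiating the coefficients in the direction $v$ is well defined and in divergence form in $y$; set
\begin{equation*}
	f_{\bar x}\big(y\big):=-\big(\partial_{v}\mathcal{L}^{\bar x,*}\big)\rho\big(\bar x,y\big),\qquad y\in\mathbb{T}^{d_Y}.
\end{equation*}
Because $\partial_{v}\mathcal{L}^{\bar x,*}$ is a sum of $y$-derivatives, $\int_{\mathbb{T}^{d_Y}}f_{\bar x}\,dy=0$, so $f_{\bar x}\in\mathfrak{L}^2\big(\mathbb{T}^{d_Y}\big)$. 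Hypothesis VI also supplies precisely the sign condition \eqref{eq:maximum_principle_condition}, i.e.\ $\widetilde c\leqslant 0$, so Lemma \ref{lm:existence_unicity_periodic_solutions_EDP} applied to the divergence-form operator $\mathcal{L}^{\bar x,*}$ furnishes a unique weak solution $\eta_{\bar x}\in\mathfrak{H}^1\big(\mathbb{T}^{d_Y}\big)$ of the differentiated Fokker--Planck equation $\mathcal{L}^{\bar x,*}\eta_{\bar x}=f_{\bar x}$ on $\mathbb{T}^{d_Y}$; Theorem \ref{tm:interior_regularity} followed by Theorem \ref{tm:embedded_H_m_C_k} (using $m>d_Y/2$) upgrades $\eta_{\bar x}$ to $\text{C}^2\big(\mathbb{T}^{d_Y}\big)$.

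Next I would compare $\eta_{\bar x}$ with the difference quotient $w_h:=h^{-1}\big(\rho(\bar x+vh,\cdot)-\rho(\bar x,\cdot)\big)$ for $h\downarrow 0$ small enough that $\bar x+vh\in\text{Int}(\mathbb{X})$. Subtracting $\mathcal{L}^{\bar x+vh,*}\rho(\bar x+vh,\cdot)=0$ from $\mathcal{L}^{\bar x,*}\rho(\bar x,\cdot)=0$ and dividing by $h$ gives
\begin{equation*}
	\mathcal{L}^{\bar x,*}w_h=-\frac1h\big(\mathcal{L}^{\bar x+vh,*}-\mathcal{L}^{\bar x,*}\big)\rho\big(\bar x+vh,\cdot\big)=:f_h ,
\end{equation*}
with $\int_{\mathbb{T}^{d_Y}}w_h\,dy=0$ from the normalization $\int_{\mathbb{T}^{d_Y}}\rho(x,\cdot)\,dy=1$ and $\int_{\mathbb{T}^{d_Y}}f_h\,dy=0$ again by the divergence structure, so $w_h\in\mathfrak{H}^1\big(\mathbb{T}^{d_Y}\big)$ and $f_h\in\mathfrak{L}^2\big(\mathbb{T}^{d_Y}\big)$. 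The $v$-difference quotients of $\mu_{Y}$ and $[\sigma_{Y}\sigma_{Y}^{\top}]$ converge, uniformly on $\mathbb{T}^{d_Y}$ together with their $y$-derivatives up to order $m+2$, to the corresponding $v$-derivatives by hypothesis VI; combining this with the $\text{C}^2$-convergence $\rho(\bar x+vh,\cdot)\to\rho(\bar x,\cdot)$ and the locally uniform $\text{H}^{m+3}$ bound from Lemma \ref{lm:continuity_Fokker_Planck_relation_slow_variable} (interpolating the $\text{C}^2$-convergence against the $\text{H}^{m+3}$ bound to obtain convergence of $\rho(\bar x+vh,\cdot)$ in $\text{H}^{m+2}$), one gets $f_h\to f_{\bar x}$ in $\text{H}^{m}\big(\mathbb{T}^{d_Y}\big)$, in particular in $\mathfrak{L}^2\big(\mathbb{T}^{d_Y}\big)$.

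Now $w_h-\eta_{\bar x}$ is the mean-zero weak solution of $\mathcal{L}^{\bar x,*}(w_h-\eta_{\bar x})=f_h-f_{\bar x}$, and on the mean-zero spaces the restriction of $\mathcal{L}^{\bar x,*}$ is a bounded bijection $\mathfrak{H}^1\to\mathfrak{L}^2$ by Lemma \ref{lm:existence_unicity_periodic_solutions_EDP}, so $0\notin\sigma\big(\mathcal{L}^{\bar x,*}\big)$ there and Lemma \ref{lm:first_energy_estimate} with $\lambda=0$ yields $\|w_h-\eta_{\bar x}\|_{\mathfrak{L}^2}\leqslant C\|f_h-f_{\bar x}\|_{\mathfrak{L}^2}\to 0$. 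A final bootstrap through Theorem \ref{tm:interior_regularity} (with right-hand side $f_h-f_{\bar x}\to 0$ in $\text{H}^m$) and Theorem \ref{tm:embedded_H_m_C_k} upgrades this to $w_h\to\eta_{\bar x}$ in $\text{C}^2\big(\mathbb{T}^{d_Y}\big)$. Hence the directional derivative $\partial_{v}\rho(\bar x,\cdot)=\eta_{\bar x}$ exists; running the argument for all directions $v$ and, since the data depend continuously on $\bar x$, using Lemma \ref{lm:continuity_Fokker_Planck_relation_slow_variable} to see that $\bar x\mapsto\partial_{v}\rho(\bar x,\cdot)$ is continuous (and passing to one-sided increments at $\partial\mathbb{X}$), we conclude that $\rho$ is differentiable — in fact continuously so — in the slow variable, with $D_x\rho$ bounded on $\mathbb{X}\times\mathbb{T}^{d_Y}$ by compactness, which is exactly the property used to obtain the Lipschitz continuity of the effective functions \eqref{subeq:effective_functions}.

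The step I expect to be the main obstacle is the convergence $f_h\to f_{\bar x}$ in a norm strong enough to close the argument: the right-hand side $f_h$ involves second $y$-derivatives of $\rho(\bar x+vh,\cdot)$, so the bare $\text{C}^2$-convergence of $\rho$ from Lemma \ref{lm:continuity_Fokker_Planck_relation_slow_variable} is only marginally sufficient and must be combined, via an interpolation/Arzelà--Ascoli argument of the type behind Theorem \ref{tm:arzela_ascoli_espaco_holder}, with the uniform higher-order Sobolev bounds produced in that lemma's proof to upgrade to $\text{H}^m$-convergence. A secondary subtlety, easy to overlook, is that $0$ is a genuine eigenvalue of $\mathcal{L}^{\bar x,*}$ on all of $\mathbb{T}^{d_Y}$, so the invertibility and the energy estimate are available only after restricting to mean-zero functions; this is why the normalizations $\int\rho=1$ and $\int w_h=\int\eta_{\bar x}=0$ must be tracked through every step.
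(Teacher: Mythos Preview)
Your proposal is correct and follows essentially the same route as the paper: derive the PDE satisfied by the difference quotient $w_h$ with mean-zero right-hand side $f_h$, invoke the invertibility of $\mathcal{L}^{\bar x,*}$ on $\mathfrak{H}^1\big(\mathbb{T}^{d_Y}\big)$ via Lemma \ref{lm:existence_unicity_periodic_solutions_EDP} (enabled by the sign condition \eqref{eq:maximum_principle_condition} in hypothesis VI), and control $w_h$ through the energy estimate of Lemma \ref{lm:first_energy_estimate}. The only organizational difference is that the paper bounds $w_h$ uniformly, extracts a limit by Arzel\`a--Ascoli, and identifies it a posteriori via uniqueness, whereas you first construct the candidate derivative $\eta_{\bar x}$ and then estimate $w_h-\eta_{\bar x}$ directly---a slightly more streamlined variant of the same argument.
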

 \begin{proof}
 	We will show that the application $x\mapsto \rho\big(x,\cdot\big)$ is differentiable in the direction $e_k$ on $\text{C}\big(\mathbb{T}^{d_Y}\big)$. Let $\rho^{h,k}:\mathbb{X}\times\mathbb{T}^{d_Y}\to\mathbb{R}$ be the quotient
 	\begin{equation}
 		\label{eq:auxiliary_quotient_01}
 		\rho^{h,k}\big(\bar{x},y\big):=\frac{\rho\big(\bar{x}+e_kh,y\big)-\rho\big(\bar{x},y\big)}{h}.
 	\end{equation}
 	Applying the operator $\mathcal{L}^{\bar{x},*}$ to both sides of the quotient \eqref{eq:auxiliary_quotient_01} and noting that $\mathcal{L}^{\bar{x},*}\rho\big(\bar{x},y\big)=0$, we obtain, by linearity of $\mathcal{L}^{\bar{x},*}$, that
 	\begin{equation}
 		\label{eq:auxiliary_equation_04}
 		\mathcal{L}^{\bar{x},*}\rho^{h,k}\big(\bar{x},y\big)=\frac{1}{h}\mathcal{L}^{\bar{x},*}\rho\big(\bar{x}+e_kh,y\big).
 	\end{equation}
 	 Consider the functions $A:\mathbb{X}\times\mathbb{T}^{d_Y}\to\mathbb{S}^{d_Y}\big(\mathbb{R}\big)$, $c: \mathbb{X}\times\mathbb{T}^{d_Y}\to\mathbb{R}$ and $b_i:\mathbb{X}\times\mathbb{T}^{d_Y}\to\mathbb{R}$ defined respectively by: 
 	\begin{equation*}
 		a_{ij}\big(x,y\big):=\big[\sigma_{Y}\sigma^{\top}_{Y}\big]_{ij}\big(x,y\big),\hspace{0.08cm}c\big(x,y\big):=\sum_{i,j}^{d_Y}\frac{1}{2}\partial^2_{y_iy_j}a_{ij}\big(x,y\big)-\sum_{i=1}^{d_Y}\partial_{y_i}\mu_{Y,i}\big(x,y\big),\hspace{0.08cm}\text{and}\hspace{0.08cm}b_i\big(x,y\big):=\sum_{j=1}^{d_Y}\partial_{y_j}a_{ij}\big(x,y\big)-\mu_{Y,i}\big(x,y\big).
 	\end{equation*}
 	With these functions we define:
 	\begin{subequations}
 		\label{subeq:coefficients_operator_h_01}
 		\begin{align}
 			\label{eq:coefficient_auxiliary_diffusion_h_01}
 			a^{h,k}_{ij}\big(x,y\big)&:=\frac{a_{ij}\big(x+e_kh,y\big)-a_{ij}\big(x,y\big)}{h},\\
 			\label{eq:coefficient_auxiliary_drift_h_01}
 			b^{h,k}_i\big(x,y\big)&:=\frac{b_i\big(x+e_kh,y\big)-b_i\big(x,y\big)}{h},\\
 			\label{eq:coefficient_auxiliary_function_c_h_01}
 			c^{h,k}\big(x,y\big)&:=\frac{c\big(x+e_kh,y\big)-c\big(x,y\big)}{h}.
 		\end{align}
 	\end{subequations}
 	Note that
 		\begin{align*}
 		c^{h,k}\big(\bar{x},y\big):=&\frac{c\big(\bar{x}+he_k,y\big)-c\big(\bar{x},y\big)}{h}\\
 		=&\sum_{i,j}^{d_Y}\frac{1}{2}\partial^2_{y_i,y_j}\Bigg(\frac{a_{ij}\big(\bar{x}+he_k,y\big)-a_{ij}\big(\bar{x},y\big)}{h}\Bigg)-\sum_{i=1}^{d_Y}\partial_{y_i}\Bigg(\frac{\mu_{Y,i}\big(\bar{x}+he_k,y\big)-\mu_{Y,i}\big(\bar{x},y\big)}{h}\Bigg)\\
 		=&\sum_{i,j}^{d_Y}\frac{1}{2}\partial^2_{y_i,y_j}a_{ij}^{h,k}\big(\bar{x},y\big)-\sum_{i=1}^{d_Y}\partial_{y_i}\Bigg(\frac{\mu_{Y,i}\big(\bar{x}+he_k,y\big)-\mu_{Y,i}\big(\bar{x},y\big)}{h}\Bigg)\\
 		=&\sum_{i,j}^{d_Y}\frac{1}{2}\partial^2_{y_i,y_j}a_{ij}^{h,k}\big(\bar{x},y\big)-\sum_{i=1}^{d_Y}\partial_{y_i}\Bigg(\frac{\big(\sum_{j=1}^{d_Y}\partial_{y_j}a_{ij}\big(\bar{x}+he_k,y\big)-b_i\big(\bar{x}+he_k,y\big)\big)-\big(\sum_{j=1}^{d_Y}\partial_{y_j}a_{ij}\big(\bar{x},y\big)-b_i\big(\bar{x},y\big)\big)}{h}\Bigg)\\
 		=&\sum_{i,j}^{d_Y}\frac{1}{2}\partial^2_{y_i,y_j}a_{ij}^{h,k}\big(\bar{x},y\big)-\sum_{i,j}^{d_Y}\partial^2_{y_i,y_j}a_{ij}^{h,k}\big(\bar{x},y\big)+\sum_{i=1}^{d_Y}\partial_{y_i}b^{h,k}_i\big(\bar{x},y\big)\\
 		=&\sum_{i=1}^{d_Y}\partial_{y_i}b^{h,k}_i\big(\bar{x},y)-\sum_{i,j}^{d_Y}\frac{1}{2}\partial^2_{y_i,y_j}a_{ij}^{h,k}\big(\bar{x},y\big).
 	\end{align*}
 	That is
 	\begin{equation}
 		\label{eq:auxiliary_equation_05}
 		c^{h,k}\big(\bar{x},y\big)=\sum_{i=1}^{d_Y}\partial_{y_i}b^{h,k}_i\big(\bar{x},y\big)-\sum_{i,j}^{d_Y}\frac{1}{2}\partial^2_{y_i,y_j}a_{ij}^{h,k}\big(\bar{x},y\big).
 	\end{equation}
 	
 	 On the other hand, consider the function $f^{h,k}:\mathbb{X}\times\mathbb{T}^{d_Y}\to\mathbb{R}$ defined by
 	\begin{equation}
 		\label{eq:auxiliary_function_F_h_k}
 		f^{h,k}\big(x,y\big):=\frac{1}{2}\sum_{i,j}^{d_Y}a^{h,k}_{ij}\big(x,y\big)\partial^2_{y_i,y_j}\rho\big(x+e_kh,y\big)+\sum_{i=1}^{d_Y}b^{h,k}_i\big(x,y\big)\partial_{y_i}\rho\big(x+e_kh,y\big)+c^{h,k}\big(x,y\big)\rho\big(x+e_kh,y\big).
 	\end{equation}
 	From Lemma \ref{lm:continuity_Fokker_Planck_relation_slow_variable} we have that $f^{h,k}\in \text{C}^{0,1}\big(\mathbb{X}\times \mathbb{T}^{d_Y}\big)$ and  combining the equation \eqref{eq:auxiliary_function_F_h_k} with the equality \eqref{eq:auxiliary_equation_04}, we get
 	\begin{equation}
 	\label{eq:PDE_derivative_parameter_h_k_01}
 		-\mathcal{L}^{\bar{x},*}\rho^{h,k}\big(\bar{x},y\big)=f^{h,k}\big(\bar{x},y\big)\quad \forall y\in \mathbb{T}^{d_Y}.
 	\end{equation}
 	Now notice that
 	\begin{equation*}
 		\int_{\mathbb{T}^{d_Y}}\rho^{h,k}\big(\bar{x},y\big)\,dy=0
 	\end{equation*}
    and	again from Lemma \ref{lm:continuity_Fokker_Planck_relation_slow_variable} we have that $\rho^{h,k}\in \text{C}^{0,2}\big(\mathbb{X}\times \mathbb{T}^{d_Y}\big)$ and therefore $\rho^{h,k}\big(\bar{x},\cdot\big)\in \mathfrak{H}^1\big(\mathbb{T}^{d_Y}\big)$. 
    
    Let's prove that $f^{h,k}\big(\bar{x},\cdot\big)\in \mathfrak{L}^2\big(\mathbb{T}^{d_Y}\big)$. To do this, it suffices to show that 
 	\begin{equation}
 		\label{eq:auxiliary_equation_06}
 		\int_{\mathbb{T}^{d_Y}}f^{h,k}\big(\bar{x},y\big)\,dy=0.
 	\end{equation}
 	First of all, we have
 	\begin{align*}
 		\int_{\mathbb{T}^{d_Y}}f^{h,k}\big(\bar{x},y\big)dy=\frac{1}{2}\sum_{i,j}^{d_Y}&\int_{\mathbb{T}^{d_Y}}a^{h,k}_{ij}\big(\bar{x},y\big)\partial^2_{y_i,y_j}\rho\big(\bar{x}+he_k,y\big)\,dy
 		+\sum_{i=1}^{d_Y}\int_{\mathbb{T}^{d_Y}}b^{h,k}_i\big(\bar{x},y\big)\partial_{y_i}\rho\big(\bar{x}+he_k,y\big)\,dy\\
 		+&\int_{\mathbb{T}^{d_Y}}c^{h,k}\big(\bar{x},y\big)\varrho\big(\bar{x}+he_k,y\big)\,dy.
 	\end{align*}
 	Integrating by parts, it follows that
 	 	\begin{align*}
 		\int_{\mathbb{T}^{d_Y}}a^{h,k}_{ij}\big(\bar{x},y\big)\partial^2_{y_i,y_j}\rho\big(\bar{x}+he_k,y\big)\,dy&=\int_{\mathbb{T}^{d_Y}}\partial^2_{y_i,y_j}a^{h,k}_{ij}\big(\bar{x},y\big)\rho\big(\bar{x}+he_k,y\big)\,dy\\
 		\int_{\mathbb{T}^{d_Y}}b^{h,k}_i\big(\bar{x},y\big)\partial_{y_i}\rho\big(\bar{x}+he_k,y\big)\,dy&=-\int_{\mathbb{T}^{d_y}}\partial_{y_i}b^{h,k}_i\big(\bar{x},y\big)\rho\big(\bar{x}+he_k,y\big)\,dy.
 	\end{align*}
 	Thus, we obtain that
 	\begin{equation*}
 		\int_{\mathbb{T}^{d_Y}}f^{h,k}\big(\bar{x},y\big)dy=\int_{\mathbb{T}^{d_Y}}\Bigg(\frac{1}{2}\sum_{i,j}^{d_Y}\partial^2_{y_i,y_j}a^{h,k}_{ij}\big(\bar{x},y\big)-\sum_{i=1}^{d_Y}\partial_{y_i}b^{h,k}_i\big(\bar{x},y\big)+c^{h,k}\big(\bar{x},y\big)\Bigg)\rho\big(\bar{x}+he_k,y\big)\,dy
 	\end{equation*}
 	 On the other hand, from equation \eqref{eq:auxiliary_equation_05}, we have that 
 	\begin{equation*}
 		\frac{1}{2}\sum_{i,j}^{d_Y}\partial^2_{y_i,y_j}a^{h,k}_{ij}\big(\bar{x},y\big)-\sum_{i=1}^{d_Y}\partial_{y_i}b^{h,k}_i\big(\bar{x},y\big)+c^{h,k}\big(\bar{x},y\big)=0.
 	\end{equation*}
 	Therefore the equation \eqref{eq:auxiliary_equation_06} holds.
 	
 	On the one hand, from Lemma \ref{lm:first_energy_estimate}  there is a constant $C>0$ depending only on the torus $\mathbb{T}^{d_Y}$ and the coefficients of the operator $\mathcal{L}^{\bar{x},*}$ such that 
  	\begin{equation*}
  		\|\rho^{h,k}\big(\bar{x},\cdot\big)\|_{\mathfrak{L}^2\big(\mathbb{T}^d\big)}\leqslant C \|f^{h,k}\big(\bar{x},\cdot\big)\|_{\mathfrak{L}^2\big(\mathbb{T}^d\big)}.
  	\end{equation*}
 	On the other hand, we can extend (by periodicity) the solution $\rho^{h,k}$ to the whole $\mathbb{R}^{d_Y}$ in such a way that the new function continues to satisfy the PDE \eqref{eq:PDE_derivative_parameter_h_k_01}. 
 	
 	Consider the ball $Q_r:=B\big(\big(\sqrt{d_Y}/2,...,\sqrt{d_Y}/2\big),r\big)\subset \mathbb{R}^{d_Y}$ of radius $r>2\sqrt{d_Y}$. Thus we obtain 
 	 \begin{equation}
 		\label{eq:PDE_derivative_parameter_h_k_02}
 		-\mathcal{L}^{\bar{x},*}\rho^{h,k}\big(\bar{x},y\big)=f^{h,k}\big(\bar{x},y\big)\quad \forall y\in Q_r.
 	\end{equation} 
 	Furthermore
 	\begin{equation}
 		\label{eq:auxiliary_inequality_04}
 	   \|\rho^{h,k}\big(\bar{x},\cdot\big)\|_{\text{L}^2\big(Q_r\big)}\leqslant C_{\mathbb{Y}}  \|\rho^{h,k}\big(\bar{x},\cdot\big)\|_{\text{L}_{\text{per}}^2\big(\mathbb{Y}\big)}\leqslant \overline{C}_{\mathbb{Y}} \|f^{h,k}\big(\bar{x},\cdot\big)\|_{\text{L}_{\text{per}}^2\big(\mathbb{Y}\big)}\leqslant\overline{C}_{\mathbb{Y}}\|f^{h,k}\big(\bar{x},\cdot\big)\|_{\text{L}^2\big(Q_r\big)}
 	\end{equation}
  	for a constant $C_{\mathbb{Y}}>0$ proportional to the number of cubes $\mathbb{Y}$ we use to cover the ball $Q_r$ such that the first inequality above holds.
 	
 	 Since $\rho^{h,k}\in \text{C}^{0,2}\big(\mathbb{X}\times \mathbb{T}^{d_Y}\big)$, we have $\rho^{h,k}\big(\bar{x},\cdot\big)\in \text{W}^{p,2}\big(Q_r\big)$ and as $f^{h,k}\in \text{C}^{0,1}\big(\mathbb{X}\times \mathbb{T}^{d_Y}\big)$, we also have $f^{h,k}\big(\bar{x},\cdot\big)\in \text{L}^{\text{p}}\big(Q_r\big)$.
 	 
 	By inequality \eqref{eq:auxiliary_inequality_from_p_q} we have that  
 	\begin{subequations}
 	  \label{subeq:auxiliary_inequalities_01}
 	   \begin{align}
 			&\|\rho^{h,k}\big(\bar{x},\cdot\big)\|_{\text{L}^{1}\big(Q_r\big)}\leqslant \lambda\big(Q_r\big)^{1-\frac{1}{2}}\|\rho^{h,k}\big(\bar{x},\cdot\big)\|_{\text{L}^{2}\big(Q_r\big)}\\
 			&\|f^{h,k}\big(\bar{x},\cdot\big)\|_{\text{L}^{2}\big(Q_r\big)}\leqslant \lambda\big(Q_r\big)^{\frac{1}{2}-\frac{1}{p}}\|f^{h,k}\big(\bar{x},\cdot\big)\|_{\text{L}^{\text{p}}\big(Q_r\big)}.
 		\end{align}
 	\end{subequations}

    Combining the inequalities \eqref{eq:auxiliary_inequality_sobolev}, \eqref{eq:auxiliary_inequality_04} and \eqref{subeq:auxiliary_inequalities_01} we obtain that 
 	\begin{equation}
 		\label{eq:auxiliary_inequality_30}
 		\|\rho^{h,k}\big(\bar{x},\cdot\big)\|_{\text{W}^{\text{p,1}}\big(Q_r\big)}\leqslant \Lambda_{d,r}\big(c_0;C_{A,b,c};r_1\big)\overline{C}_{\mathbb{Y}}\bigg(\lambda\big(Q_r\big)^{1-\frac{1}{2}}+\lambda\big(Q_r\big)^{\frac{1}{2}-\frac{1}{p}}\bigg)\|f^{h,k}\big(\bar{x},\cdot\big)\|_{\text{L}^{\text{p}}\big(Q_r\big)}
 	\end{equation} 
 	According to item VI, the following limits holds:
 	\begin{equation*}
 		\lim_{h\to0}a^{h,k}_{ij}\big(x,y\big)=\partial_{x_k}a_{ij}\big(x,y\big),\quad\lim_{h\to0}b^{h,k}_i\big(x,y\big)=\partial_{x_k}b_i\big(x,y\big)\quad\text{and}\quad
 		\lim_{h\to0}c^{h,k}\big(x,y\big)=\partial_{x_k}c\big(x,y\big)\quad \forall (x,y)\in \mathbb{X}\times\mathbb{T}^{d_Y}. 
 	\end{equation*}
    Furthermore, by the same item VI, the functions $\partial_{x_k}a_{ij}$, $\partial_{x_k}b_i$ and $\partial_{x_k}c$ are continuous in $\mathbb{X}\times\mathbb{T}^{d_Y}$. Thus, there exists a constant $C_0>0$, such that
 	\begin{equation}
 		\label{eq:auxiliary_bounded_01}
 		\sup_{h>0}\sup_{(x,y)\in \mathbb{X}\times\mathbb{T}^{d_Y}}\Biggl\{\bigg|a^{h,k}_{ij}\big(x,y\big)\bigg|+\bigg|b^{h,k}_i\big(x,y\big)\bigg|+\bigg|c^{h,k}\big(x,y\big)\bigg|\Biggr\}<C_0.
 	\end{equation}
 	In addition, from itens II and VI, we obtain the following inequality in the proof of Lemma \ref{lm:continuity_Fokker_Planck_relation_slow_variable}
 	\begin{equation}
 		\label{eq:auxiliary_bounded_02}
 		\|\rho\big(\bar{x}+e_kh,\cdot\big)\|_{\text{C}_{\text{hol}}^{2,1}\big(\overline{Q}_{\frac{r}{2}}\big)}\leqslant K_1.
 	\end{equation}
 	Combining inequalities \eqref{eq:auxiliary_bounded_01} and \eqref{eq:auxiliary_bounded_02}, it follows from the definition of $f^{h,k}$ (equation \eqref{eq:auxiliary_function_F_h_k}) that there exists a constant $\widetilde{C}>0$ such that 
 	\begin{equation}
 		\label{eq:auxiliary_bounded_03}
 		\|f^{h,k}\big(\bar{x},\cdot\big)\|_{\text{L}^{\text{p}}\big(\overline{Q}_{\frac{r}{2}}\big)}\leqslant \widetilde{C}.
 	\end{equation}
 	From the inequalities \eqref{eq:auxiliary_inequality_30} and \eqref{eq:auxiliary_bounded_03} we have 
 	\begin{equation}
 		\label{eq:auxiliary_bounded_04}
 			\|\rho^{h,k}\big(\bar{x},\cdot\big)\|_{\text{W}^{\text{p,1}}\big(\overline{Q}_{\frac{r}{2}}\big)}\leqslant \widetilde{\gamma},
 	\end{equation}
 	where 
 	\begin{equation*}
 		\widetilde{\gamma}:=\Lambda_{d,r}\big(c_0;C_{A,b,c};r_1\big)\overline{C}_{\mathbb{Y}}\bigg(\lambda\big(Q_r\big)^{1-\frac{1}{2}}+\lambda\big(Q_r\big)^{\frac{1}{2}-\frac{1}{p}}\bigg)\widetilde{C}.
 	\end{equation*}
 	Now from Theorem \ref{tm:rellich_kondrachov} we obtain that 
 	\begin{equation}
 		\label{eq:auxiliary_bounded_05}
 		\|\rho^{h,k}\big(\bar{x},\cdot\big)\|_{\text{C}_{\text{hol}}^{0,1-\frac{d_Y}{p}}\big(\overline{Q}_{\frac{r}{2}}\big)}\leqslant C\widetilde{\gamma}.
 	\end{equation}
 	By Theorem \ref{tm:arzela_ascoli_espaco_holder} there exists a subsequence $\big(\rho^{h_i,k}\big(\bar{x},\cdot\big)\big)_{i\geqslant0}$  and a function $\varrho^{k}\big(\bar{x},\cdot\big)\in \text{C}_{\text{hol}}^{0,1-\frac{d_Y}{p}}\big(\overline{Q}_{\frac{r}{2}}\big)$, such that this sequence converges uniformly to $\varrho^{k}\big(\bar{x},\cdot\big)$ in $\text{C}\big(\overline{Q}_{\frac{r}{2}}\big)$.
 	Since the convergence is uniform, $\varrho^{k}\big(\bar{x},\cdot\big)$ is periodic and,
 	therefore, we can consider that $\varrho^k\big(\bar{x},\cdot\big)$ is defined on the $\mathbb{T}^{d_Y}$. 
 	
 	From the inequality \eqref{eq:auxiliary_bounded_05} and the fact that the convergence is uniform, we have from the dominated convergence theorem that 
 	\begin{equation*}
 		\int_{\mathbb{T}^{d_Y}}\varrho^k\big(\bar{x},y\big)dy=\lim_{i\to+\infty}\int_{\mathbb{T}^{d_Y}}\rho^{h_i,k}\big(\bar{x},y\big)dy=0.
 	\end{equation*}
 	Thus, $\varrho^k\big(\bar{x},\cdot\big)\in \mathfrak{H}^{1}\big(\mathbb{T}^{d_Y}\big)$.

 	On the other hand, consider the function $f^{k}:\mathbb{X}\times\mathbb{T}^{d_Y}\to\mathbb{R}$ defined by
 	\begin{equation*}
 	   f^{k}\big(x,y\big):=\frac{1}{2}\sum_{i,j}^{d_Y}\partial_{x_k}a_{ij}\big(x,y\big)\partial^2_{y_i,y_j}\rho\big(x,y\big)+\sum_{i=1}^{d_Y}\partial_{x_k}b_i\big(x,y\big)\partial_{y_i}\rho\big(x,y\big)+\partial_{x_k}c\big(x,y\big)\rho\big(x,y\big).
 	\end{equation*}
 	We have that 
 	\begin{equation}
 		\label{eq:auxiliary_limit_01}
 		\lim_{h\to0}f^{h,k}\big(\bar{x},y\big)=f^{k}\big(\bar{x},y\big)\quad \forall (x,y)\in \mathbb{X}\times\mathbb{T}^{d_Y}. 
 	\end{equation}
 	From the inequality \eqref{eq:auxiliary_bounded_03} and the convergence \eqref{eq:auxiliary_limit_01}, we have from the dominated convergence theorem that 
 	\begin{equation*}
 		\int_{\mathbb{T}^{d_Y}}f^k\big(\bar{x},y\big)dy=\lim_{i\to+\infty}\int_{\mathbb{T}^{d_Y}}f^{h_i,k}\big(\bar{x},y\big)dy=0.
 	\end{equation*}
 	Thus, $f^k\big(\bar{x},\cdot\big)\in \mathfrak{L}^{2}\big(\mathbb{T}^{d_Y}\big)$. 
 	
 	From the inequalities \eqref{eq:auxiliary_bounded_03} and \eqref{eq:auxiliary_bounded_05} combined with the fact that $\rho^{h_i,k}\big(\bar{x},\cdot\big)$ and $f^{h_i,k}\big(\bar{x},\cdot\big)$ converge to $\varrho^k\big(\bar{x},\cdot\big)$ and $f^{k}\big(\bar{x},\cdot\big)$, respectively, it follows from the dominated convergence theorem and  the definition of weak solution (Definition \ref{def:weak_solution_H}) that
 	 \begin{equation}
 	 	\label{eq:PDE_derivative_parameter_limite}
 	 	-\mathcal{L}^{\bar{x},*}\varrho^{k}\big(\bar{x},y\big)=f^{k}\big(\bar{x},y\big)\quad \forall y\in \mathbb{T}^{d_Y}.
 	 \end{equation}
 	 From item VI and Lemma \ref{lm:existence_unicity_periodic_solutions_EDP} we know that PDE \eqref{eq:PDE_derivative_parameter_limite} has only one solution and therefore $\big(\rho^{h,k}\big(\bar{x},\cdot\big)\big)_{h\geqslant0}$ converges uniformly to $\varrho^{k}\big(\bar{x},\cdot\big)$ on $\text{C}\big(\mathbb{T}^{d_Y}\big)$. Then, the application $x\mapsto \rho\big(x,\cdot\big)$ is differentiable at the point $\bar{x}\in\mathbb{X}$ in the direction $e_k$ in $\text{C}\big(\mathbb{T}^{d_Y}\big)$ and $\varrho^k\big(x,\cdot\big)=\partial_{x_k}\rho\big(x,\cdot\big)$. Furthermore, by inequalitie \eqref{eq:auxiliary_bounded_05}, we obtain that
 	\begin{equation*}
 		\|\partial_{x_k}\rho\big(x,\cdot\big)\|_{\text{C}_{\text{hol}}^{0,1-\frac{d_Y}{p}}\big(\mathbb{T}^{d_Y}\big)}\leqslant C\widetilde{\gamma}\quad \forall x\in \mathbb{X}.
 	\end{equation*}
 	In particular
 	\begin{equation*}
 		\|D_x\rho\big(x,y\big)\|\leqslant C\sqrt{d_X}\quad \forall (x,y)\in \mathbb{X}\times \mathbb{T}^{d_Y}.
 	\end{equation*}
 \end{proof}
\bibliographystyle{siam} 
\bibliography{references} 

\begin{thebibliography}{10}

\bibitem{AlvarezBardi01}
{\sc O.~Alvarez and M.~Bardi}, {\em Viscosity solutions methods for singular
  perturbations in deterministic and stochastic control}, SIAM J.Control Optim,
  40 (2001), p.~1159–1188.

\bibitem{AlvarezBardi02}
\leavevmode\vrule height 2pt depth -1.6pt width 23pt, {\em Singular
  perturbations of nonlinear degenerate parabolic pdes: a general convergence
  result}, Ration. Mech. Anal, 170 (2003), p.~17–61.

\bibitem{AlvarezBardi}
\leavevmode\vrule height 2pt depth -1.6pt width 23pt, {\em Ergodicity,
  Stabilization, and Singular Perturbations for Bellman-Isaacs Equations},
  American Mathematical Society, 1º~ed., 2010.

\bibitem{AlvarezBardiMarchi}
{\sc O.~Alvarez, M.~Bardi, and C.~Marchi}, {\em Multiscale problems and
  homogenization for second-order hamilton–jacobi equations}, J.Differential
  Equations,  (2007).

\bibitem{BardiDolcetta}
{\sc M.~Bardi and I.~Capuzzo-Dolcetta}, {\em Optimal Control and Viscosity
  Solutions of Hamilton-Jacobi-Bellman Equations}, Birkhauser, 1º~ed., 1997.

\bibitem{BardiCesaroni}
{\sc M.~Bardi and A.~Cesaroni}, {\em Optimal control with random parameters: a
  multiscale approach}, European Journal of Control, 17 (2009), p.~30–45.

\bibitem{BardiCesaroniManca}
{\sc M.~Bardi, A.~Cesaroni, and L.~Manca}, {\em Convergence by viscosity
  methods in multiscale financial models with stochastic volatility}, SIAM
  J.Financial Math, 1 (2010), p.~230–265.

\bibitem{BardiKouhkouh02}
{\sc M.~Bardi and H.~Kouhkouh}, {\em Singular perturbations in stochastic
  optimal control with unbounded data}, ESAIM: COCV, 29 (2023), p.~25.

\bibitem{BardiKouhkouh01}
\leavevmode\vrule height 2pt depth -1.6pt width 23pt, {\em Deep relaxation of
  controlled stochastic gradient descent via singular perturbations}, SIAM J.
  Control Optim, 62 (2024), pp.~2229--2253.

\bibitem{BogachevKrylovRockner}
{\sc V.~I. Bogachev, N.~V. Krylov, and M.~Röckner}, {\em On regularity of
  transition probabilities and invariant measures of singular diffusions under
  minimal conditions}, Communications in Partial Differential Equations, 26
  (2001), pp.~11--12.

\bibitem{BogachevKrylovRocknerShaposhnikov}
{\sc V.~I. Bogachev, N.~V. Krylov, M.~Röckner, and S.~V. Shaposhnikov}, {\em
  Fokker–Planck–Kolmogorov Equations}, American Mathematical Society,
  1º~ed., 2015.

\bibitem{CalixtoCostaValle2025}
{\sc A.~O. Calixto, B.~F.~P. da~Costa, and G.~Valle}, {\em Stochastic optimal
  control with control-dependent diffusion and state constraints: A degenerate
  elliptic approach}, arXiv,  (2025).
\newblock {https://arxiv.org/abs/2508.04809}.

\bibitem{EvansGariepy}
{\sc L.~C.Evans and R.~F.Gariepy}, {\em Measure Theory And Fine Properties Of
  Functions}, CRC Press, 1º~ed., 2015.

\bibitem{CioranescuDonato}
{\sc D.~Cioranescu and P.~Donato}, {\em An Introduction to Homogenization},
  Oxford University Press, 1º~ed., 1999.

\bibitem{Evans}
{\sc L.~C. Evans}, {\em Partial Differential Equations}, American Mathematical
  Society, 2º~ed., 2010.

\bibitem{FlemingSoner}
{\sc W.~H. Fleming and H.~M. Soner}, {\em Controlled Markov Processes and
  Viscosity Solutions}, Springer, 2º~ed., 2006.

\bibitem{IlyinKalashnikovOleynik}
{\sc A.~M. Ilyin, A.~S. Kalashnikov, and O.~A. Oleynik}, {\em Linear second
  order partial differential equations of the parabolic type}, Journal of
  Mathematical Sciences, 108 (2002).

\bibitem{Khasminskii}
{\sc R.~Khasminskii}, {\em Stochastic Stability of Differential Equations},
  Springer, 2º~ed., 2011.

\bibitem{Krylov}
{\sc N.~V. Krylov}, {\em Controlled Diffusion Processes}, Springer, 1º~ed.,
  1980.

\bibitem{Kushner}
{\sc H.~J. Kushner}, {\em Weak Convergence Methods and Singularly Perturbed
  Stochastic Control and Filtering Problems}, Birkhauser, 1º~ed., 1990.

\bibitem{LionsSznitman}
{\sc P.~L. Lions and A.~S. Sznitman}, {\em Stochastic differential equations
  with reflecting boundary conditions}, Communications on Pure and Applied
  Mathematics, XXXVII (1984).

\bibitem{Oksendal}
{\sc B.~Oksendal}, {\em Stochastic Differential Equations}, Springer, 5º~ed.,
  2000.

\bibitem{PardouxRascanu}
{\sc E.~Pardoux and A.~Rascanu}, {\em Stochastic Differential Equations,
  Backward SDEs, Partial Differential Equations}, Springer, 1º~ed., 2014.

\bibitem{Pavliotis}
{\sc G.~A. Pavliotis}, {\em Stochastic Processes and Applications: diffusion
  processes, the Fokker-Planck and Langevin equations}, Springer, 1º~ed.,
  2014.

\bibitem{PavliotisStuart}
{\sc G.~A. Pavliotis and A.~M. Stuart}, {\em Multiscale Methods: averaging and
  homogenization}, Springer, 1º~ed., 2008.

\bibitem{Pilipenko}
{\sc A.~Pilipenko}, {\em An Introduction to Stochastic Differential Equations
  with Reflection}, Potsdam University Press, 1º~ed., 2014.

\bibitem{RealOton}
{\sc X.~F. Real and X.~Ros-Oton}, {\em Regularity Theory for Elliptic PDE}, EMS
  Press, 1º~ed., 2022.

\bibitem{Salsa}
{\sc S.~Salsa}, {\em Partial Differential Equations in Action: from modelling
  to theory}, Prentice-Hall, 1º~ed., 2008.

\end{thebibliography}

\end{document}